\def\XXint#1#2#3{{\setbox0=\hbox{$#1{#2#3}{\int}$}
     \vcenter{\hbox{$#2#3$}}\kern-.5\wd0}}
\begin{document}
\title[Peak Solutions for the fractional Nirenberg problem]
{Peak Solutions for the fractional Nirenberg problem}
\author{Yan-Hong Chen}
\address{School of Mathematical Science, Nankai University, Tianjin 300071, P.R. China}
\email{cyh1801@163.com}
\author{Youquan Zheng}
\address{School of Science, Tianjin University, Tianjin 300072, P. R. China.}
\email{zhengyq@tju.edu.cn}
\newcommand{\optional}[1]{\relax}
\setcounter{secnumdepth}{3}
\setcounter{section}{0} \setcounter{equation}{0}
\numberwithin{equation}{section}
\newcommand{\MLversion}{1.1}
\thanks{The second author was partially supported by NSFC of China (11301374) and SRFDP.}
\keywords{Fractional Laplacian, peak solutions, Nirenberg problem, prescribing curvature problem}
\date{\today}
\begin{abstract}
In this paper, the fractional order curvature equation $(-\Delta)^\gamma u  = (1 + \varepsilon K(x))u^{\frac{N + 2\gamma}{N - 2\gamma}}$ in $\mathbb{R}^N$ is considered. Assuming $K(x)$ has two critical points satisfying certain local conditions, we prove the existence of two-peak solutions.
\end{abstract}
\maketitle
\section{Introduction}
On the standard sphere $(\mathbb{S}^N, g_0)$ of dimension $N\geq 2$, the famous Nirenberg problem asks: whether there exists a function $w$ on $\mathbb{S}^N$ such that the scalar curvature (Gauss curvature for $N = 2$) $R_g$ of the conformal metric $g = e^w g_0$ equals to a prescribed function $K$? Equivalently, one has to solve equations
\begin{equation}\label{e:Yamabe1}
-\Delta_{g_0}w + 1 = K e^w,\quad\text{ on }\mathbb{S}^2
\end{equation}
and
\begin{equation}\label{e:Yamabe}
-\Delta_{g_0}v + \frac{N - 2}{4(N-1)}R_{g_0}v = \frac{N - 2}{4(N - 1)} K v^{\frac{N + 2}{N - 2}},\quad\text{ on }\mathbb{S}^N,\quad N\geq 3,
\end{equation}
where $v = e^{\frac{N-2}{4}w}$.

The linear operators appearing on the left of (\ref{e:Yamabe1}) and (\ref{e:Yamabe}) are the well known conformal Laplacians of $g_0$. Their definition can be extended to a general compact Riemannian manifold $(M, g)$ of dimension $N\geq 2$ which we denote as $P_1^{g}$. In the 1980s, Paneitz discovered another conformally covariant operator
\begin{equation*}\label{e:Paneitz}
P_2^g = (-\Delta_g)^2 - {\rm div}_g (a_N R_gg + b_NRic_g)d + \frac{N -4}{2}Q_N^g,
\end{equation*}
see \cite{Paneitz2008} and \cite{DjadliHebeyLedoux2000}. Here $Q_N^g$ is the $Q$-curvature and $Ric_g$ is the Ricci curvature of $g$, $a_N$, $b_N$ are suitable constants depending on $N$. In \cite{GrahamJenneSparling1992}, a sequence of conformally covariant elliptic operators $\{P_k^g\}$ was constructed for all $k\in \mathbb{N}$ if $N$ is odd but only for $k\in\{1,\cdot\cdot\cdot, N/2\}$ if $N$ is even. In \cite{LiYanyanNonlinearAnalysis2012} and \cite{LiYanyanNonlinearAnalysis2014}, the authors provided a complete characterization for fully nonlinear conformally covariant differential operators of any integer order on $\mathbb{R}^N$. Then the existence of conformally covariant differential operators of non-integer orders is a natural problem. In \cite{Peterson2000}, an intrinsically defined conformally covariant pseudo-differential operator of arbitrary real number order was constructed. In \cite{GramZworski2003}, Graham and Zworski proved that $P_k^g$ can be defined as the residues at $\gamma = k$ of a meromorphic family of
scattering operators and thus a family of elliptic
pseudodifferential operators $P_{\gamma}^g$ for non-integer $\gamma$ was given.

The fractional Laplacian $(-\Delta)^\gamma$ on $\mathbb{R}^{N}$ with $\gamma\in (0, 1)$ is a classical nonlocal operator.
It can be defined as
$$
(-\Delta)^\gamma u = C(N, \gamma)\mbox{P. V.} \int_{\mathbb{R}^N}\frac{u(x) - u(y)}{|x - y|^{N + 2\gamma}}dy = C(N, \gamma)\lim_{\varepsilon\to 0^+}\int_{B_{\varepsilon}^c(x)}\frac{u(x) - u(y)}{|x - y|^{N + 2\gamma}}dy.
$$
Here $C(N, \gamma) = \pi^{-(2\gamma + N/2)}\frac{\Gamma(N/2 + \gamma)}{\Gamma(-\gamma)}$ and P. V. means the principal value.
In the celebrated paper \cite{Caffarelli&Silvestre07} by Caffarelli and Silvestre, the authors proved that a solution $u = u(x)$ of $(-\Delta)^\gamma u = f$ in $\mathbb{R}^{N}$ can be extended to a solution $u = u(x, t)$ of the following problem
\begin{equation*}
\begin{cases} \text{div}(t^{1 -2\gamma}\nabla u) = 0 , &  \text{ in }\mathbb{R}^{N+1}_{+},\\
-d_{\gamma}t^{1 -2\gamma}\partial_{t}u|_{t\to 0} = f, & \text{ on $\partial\mathbb{R}^{N+1}_{+}$},
\end{cases}
\end{equation*}
with $d_{\gamma} = 2^{2\gamma -1}\frac{\Gamma(\gamma)}{\Gamma(1 - \gamma)}$. In \cite{ChangGonzalez2011}, Chang and Gonzalez characterized $P_{\gamma}^g$ as a Dirichlet to Neumann operator on a conformally compact Einstein manifold by the localization method of \cite{Caffarelli&Silvestre07}.

The conformally covariant property for $P_\gamma^g$ with $\gamma\in (0, \frac{N}{2})$ can be expressed as follows:
if $g = v^{\frac{4}{N - 2\gamma}}g_0$, then
\begin{equation}\label{e:conformalltransform}
P_{\gamma}^{g_0}(vf) = v^{\frac{N + 2\gamma}{N - 2\gamma}}P_{\gamma}^g(f)
\end{equation}
holds for any smooth function $f$, see \cite{ChangGonzalez2011}. Similar to the formula for scalar curvature and the Paneitz-Branson $Q$-curvature, the $Q$-curvature for $g$ of order $2\gamma$, can be defined as
\begin{equation*}
Q_{\gamma}^{g} = P_{\gamma}^{g}(1).
\end{equation*}
Thus, on a smooth compact Riemannian manifold $(M, g)$ of dimension $N\geq 2$, one has a question:
is there a representation $g$ of the conformal class $[g]$ such that $Q_{\gamma}^{g} = $ a prescribed function $K$?
By (\ref{e:conformalltransform}), one has to solve the following semi-linear equation,
\begin{equation}\label{e:main111}
P_{\gamma}^{g} v = Kv^{\frac{N + 2\gamma}{N - 2\gamma}}, \,\,\, v > 0 {\rm \,\,\,on\,\,\,} M.
\end{equation}

If $\gamma = 1$, (\ref{e:main111}) is the classical prescribing curvature problem. There is large mount of
research on this equation, we limit ourselves to citing \cite{ChLiMNEE} and \cite{JinLiyanyanXiongNirenbergproblemBlowupanalysis},
referring to their bibliography for more works.
When $\gamma$ is non-integer, we refer the interested readers to \cite{FangGonzalezpreprint}, \cite{GonzalezJGA2012}, \cite{GonzalezAPDE2013}, \cite{QingRaske2006} and the references therein for recent progress. If $(M, g)$ is the standard sphere $(\mathbb{S}^N, g_0)$, (\ref{e:main111}) is the fractional Nirenberg problem
\begin{equation}\label{e:mainllllllllllllll}
\left
\{
\begin{array}{lll}
P_\gamma^{g_0} v = K v^{\frac{N + 2\gamma}{N - 2\gamma}}\text{ on }\mathbb{S}^{N},\\
v > 0,
\end{array}
\right.
\end{equation}
which was studied in
\cite{AbdelhediChtiouiJFA2013}, \cite{JinLiyanyanXiongNirenbergproblemBlowupanalysis}, \cite{JinLiyanyanXiongNirenbergproblemBlowupanalysisII} and \cite{JinLiyanyanXiongNirenbergproblemUnifiedapproach}. In \cite{AbdelhediChtiouiJFA2013}, existence of positive solutions has been proved under the Euler-Hopf-type criterion for $K(x)$ via the method of critical points at infinity (see \cite{BahriACriticalPIVC1988} and \cite{BahriCoron1988}) when $N = 2$. In \cite{JinLiyanyanXiongNirenbergproblemBlowupanalysis}, blow up and compactness results of solutions were obtained. In \cite{JinLiyanyanXiongNirenbergproblemBlowupanalysisII}, existence of solutions was proved under conditions of Bahri-Coron type and a fractional Aubin inequality was also proved. In \cite{JinLiyanyanXiongNirenbergproblemUnifiedapproach}, a unified approach was developed for blowing up profiles, compactness and existence of solutions for (\ref{e:mainllllllllllllll}). We also note that critical problems on domains of $\mathbb{R}^N$ involving the fractional Laplacian were studied in \cite{PalatucciNA2015} and \cite{PalatucciCVPDE2014}.

Although the operator $P_\gamma^g$ on a general manifold depends on the extension problem, the operator $P_\gamma^{g_0}$ is the $2\gamma$ order conformal Laplacian on $\mathbb{S}^{N}$ and it can be uniquely expressed as
$$
P_\gamma^{g_0} = \frac{\Gamma(B + \frac{1}{2} +\gamma)}{\Gamma(B + \frac{1}{2} -\gamma)},\quad B = \sqrt{-\Delta_{g_0} + \left(\frac{N - 1}{2}\right)^2}.
$$
Let $\mathcal{N}$ be the north pole of $\mathbb{S}^N$ and
\begin{equation*}
F:\mathbb{R}^N\to \mathbb{S}^N\setminus\{\mathcal{N}\},\quad x\mapsto \left(\frac{2x}{1 + |x|^2}, \frac{|x|^2 - 1}{|x|^2 + 1}\right)
\end{equation*}
be the inverse of stereographic projection operator from $\mathbb{S}^N\setminus\{\mathcal{N}\}$ to $\mathbb{R}^N$. Then one has the following relation
\begin{equation*}\label{e:Qcurvature}
P_{\gamma}^{g_0}(\phi)\circ F = |J_F|^{-\frac{N + 2\gamma}{2N}}(-\Delta)^{\gamma}(|J_F|^{\frac{N - 2\gamma}{2N}}(\phi\circ F)){\rm\,\,\, for\,\, all\,\,\,}\phi\in C^{\infty}(\mathbb{S}^N),
\end{equation*}
where $|J_F| = \left(\frac{2}{1 + |x|^2}\right)^N$. Hence, for a solution $v$ of (\ref{e:mainllllllllllllll}),
$u(x) = |J_F|^{\frac{n - 2\gamma}{2n}}v(F(x))$ satisfies
\begin{equation}\label{e:mainllllllllll}
(-\Delta)^\gamma u = (Q\circ F) u^{\frac{N + 2\gamma}{N - 2\gamma}},\,\,\, u > 0 {\rm\,\,\,on\,\,\,}\mathbb{R}^N.
\end{equation}

In this paper, let $Q\circ F$ in (\ref{e:mainllllllllll}) be of form $Q\circ F = 1 + \varepsilon K(x)$
and we are interested in peak solutions for this equation. That is to say, we consider
\begin{equation}\label{e:main}
\begin{cases} (-\Delta)^\gamma u  = (1 + \varepsilon K(x))u^{\frac{N + 2\gamma}{N - 2\gamma}}\quad x\in\mathbb{R}^N,\\
u > 0\quad\text{in $\mathbb{R}^N$},
\end{cases}
\end{equation}
where $\varepsilon > 0$, $\gamma \in (0, 1)$, $2\gamma < N$ and $K\in C^1(\mathbb{R}^N)\cap L^\infty(\mathbb{R}^N)$.

Let $S_{\gamma}$ be the best constant of Sobolev embeddings
\begin{equation*}
\dot{H}^\gamma(\mathbb{R}^N)\hookrightarrow L^{2^*_\gamma}(\mathbb{R}^N),
\end{equation*}
where $2^*_\gamma = 2N/(N - 2\gamma)$. So the following inequality holds
\begin{equation}\label{e:embedding}
\displaystyle S_{\gamma}\left[\int_{\mathbb{R}^N}|u|^{2^*_\gamma}dx\right]^{\frac{2}{2^*_\gamma}}\leq \int_{\mathbb{R}^N}|(-\Delta)^{\frac{\gamma}{2}}u|^2dx\quad\text{for all } u\in\dot{H}^\gamma(\mathbb{R}^N).
\end{equation}
From a celebrated result of Lieb in \cite{LiebAnnals1983}, one has that
\begin{equation*}
S_{\gamma} = \displaystyle\left(2^{-2\gamma}\pi^{-\gamma}\frac{\Gamma(\frac{N - 2\gamma}{2})}{\Gamma(\frac{N + 2\gamma}{2})}\left[\frac{\Gamma(N)}{\Gamma(N/2)}\right]^{2\gamma/N}\right)^{-\frac{2^*_\gamma}{2}}.
\end{equation*}
Moreover, the extremals are of form
\begin{equation}\label{e:extremals}
U_{x_0, \lambda}(x) = C_0\left(\frac{\lambda}{1 + \lambda^2|x - x_0|^2}\right)^{\frac{N - 2\gamma}{2}}\quad \forall x\in \mathbb{R}^N,
\end{equation}
where $C_0\in \mathbb{R}\setminus\{0\}$, $\lambda > 0$ are constants and $x_0\in \mathbb{R}^N$ is a fixed point.
In \cite{ChenLiCPAM2006} and \cite{LiYanyanJEMS2005}, the authors proved that all positive solutions of the Euler-Lagrange equation of the best constant problem for (\ref{e:embedding}) are of form (\ref{e:extremals}). More precisely, they proved that every positive regular solution for the semilinear partial differential equation
\begin{equation*}\label{e:unperturbed}
(-\Delta)^\gamma u = u^{\frac{N + 2\gamma}{N - 2\gamma}}
\end{equation*}
assumes the form
\begin{equation*}
U_{x_0, \lambda}(x) = C_0\left(\frac{\lambda}{1 + \lambda^2|x - x_0|^2}\right)^{\frac{N - 2\gamma}{2}},\quad \forall x\in \mathbb{R}^N
\end{equation*}
for some constant $C_0 = C_0(N, \gamma)$.

Let $\Sigma$ be the set of all critical points of $K$ (we denote such critical points as $z$), satisfying (after a suitable change of the coordinate system depending on $z$),
$$
K(x) = K(z) + \displaystyle\sum_{i = 1}^N a_i|x_i - z_i|^\beta + O(|x - z|^{\beta + \sigma})
$$
for $x$ close to $z$, where $a_i$, $\beta$ and $\sigma$ are some constants depending on $z$, $a_i\neq 0$ for $i = 1,\cdot\cdot\cdot, N$, $\sum_{i = 1}^N a_i < 0$, $\beta\in (1, N - 2\gamma)$ and $\sigma\in (0, 1)$. Then we have the main result of our paper.
\begin{theorem}\label{t:main}
Suppose that $\gamma \in (0, 1)$, $2\gamma < N$, $K\in C^1(\mathbb{R}^N)\cap L^\infty(\mathbb{R}^N)$ and $\Sigma$ contains at least two points. Then for each pair $z^1$, $z^2\in \Sigma$, $z^1\neq z^2$, there exits an $\varepsilon_0 > 0$ such that when $\varepsilon\in (0, \varepsilon_0)$, (\ref{e:main}) has at least a solution of form
\begin{equation*}\label{e:solution}
u_\varepsilon = \sum_{j = 1}^2\alpha_{j, \varepsilon} U_{y^j_\varepsilon, \lambda_{j,\varepsilon}} + v_\varepsilon, \text{ with } \alpha_{j, \varepsilon}\to 1, y^j_\varepsilon \to z^j, \lambda_{j, \varepsilon}\to +\infty
\end{equation*}
and $\|v_\varepsilon\|_{\dot{H}^\gamma(\mathbb{R}^N)}\to 0$ as $\varepsilon\to 0$.
\end{theorem}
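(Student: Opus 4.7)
The plan is to construct $u_\varepsilon$ via a finite-dimensional Lyapunov--Schmidt reduction, viewing (\ref{e:main}) as a small perturbation of the critical equation $(-\Delta)^\gamma u = u^{(N+2\gamma)/(N-2\gamma)}$, whose positive solutions form the parameter manifold $\{U_{x_0,\lambda}\}$ recalled in (\ref{e:extremals}). I work in $\dot{H}^\gamma(\mathbb{R}^N)$ (or equivalently in the Caffarelli--Silvestre extension space on $\mathbb{R}^{N+1}_+$), so that solutions of (\ref{e:main}) correspond to critical points of
\[
I_\varepsilon(u)=\tfrac12\int_{\mathbb{R}^N}|(-\Delta)^{\gamma/2}u|^2\,dx-\tfrac{1}{2^*_\gamma}\int_{\mathbb{R}^N}(1+\varepsilon K(x))\,|u|^{2^*_\gamma}\,dx.
\]
I seek a critical point of ansatz form $u=W_{\alpha,y,\lambda}+v$, where $W_{\alpha,y,\lambda}=\alpha_1 U_{y^1,\lambda_1}+\alpha_2 U_{y^2,\lambda_2}$ with $\alpha_j$ close to $1$, $y^j$ close to $z^j$ (so $|y^1-y^2|\ge|z^1-z^2|/2$ and the two bubbles are well-separated), $\lambda_j$ large, and $v$ small and $\dot H^\gamma$-orthogonal to the tangent space $T_W$ spanned by the $2(N+2)$ parameter-derivatives $\partial_{\alpha_j}W$, $\partial_{\lambda_j}W$, $\partial_{(y^j)_i}W$.

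The key auxiliary step is the uniform invertibility of the projected linearization $P_{T_W^\perp}\,I''_\varepsilon(W)|_{T_W^\perp}$. On a single bubble this follows from the non-degeneracy recalled in the paper (the kernel of $I''_0(U_{x_0,\lambda})$ is exactly its parameter-derivatives), and for the two-bubble configuration the smallness of interaction terms $U_{y^1,\lambda_1}U_{y^2,\lambda_2}$ in the appropriate norms, valid for well-separated peaks, upgrades this to a bound uniform in $\varepsilon$ and in the admissible parameter set. A contraction argument applied to the auxiliary equation $P_{T_W^\perp}\nabla I_\varepsilon(W+v)=0$ then produces a unique $v=v_\varepsilon(\alpha,y,\lambda)\in T_W^\perp$, smooth in the parameters, with $\|v\|_{\dot H^\gamma}=o(1)$ as $\varepsilon\to 0$ at the scale of $\lambda_j$ chosen below.

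After reduction, critical points of $J_\varepsilon(\alpha,y,\lambda):=I_\varepsilon(W_{\alpha,y,\lambda}+v_\varepsilon)$ correspond to genuine solutions of (\ref{e:main}). Expansion of $J_\varepsilon$ in $\alpha$ is quadratic and forces $\alpha_j=1+o(1)$ at stationary points, while the main $(y,\lambda)$-dependent terms come from (i) the local integrals $\int K U_{y^j,\lambda_j}^{2^*_\gamma}\,dx$, and (ii) the two-bubble interaction energy, which via the Riesz identity $(-\Delta)^\gamma U_{y,\lambda}=U_{y,\lambda}^{2^*_\gamma-1}$ reduces to $\alpha_1\alpha_2\int U_{y^1,\lambda_1}^{2^*_\gamma-1}U_{y^2,\lambda_2}\,dx$ plus lower-order. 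Using the change of variables $x=y^j+\lambda_j^{-1}\xi$ together with the local ansatz on $K$ near $z^j$, ingredient (i) expands (for $y^j$ near $z^j$) as
\[
\int K(x)\,U_{y^j,\lambda_j}^{2^*_\gamma}\,dx=c_0 K(z^j)+c_0\,\lambda_j^{-\beta}\sum_{i=1}^{N}a_i\,b_i+o(\lambda_j^{-\beta}),
\]
with $b_i=\int_{\mathbb{R}^N}|\xi_i|^\beta(1+|\xi|^2)^{-N}d\xi>0$, while ingredient (ii) is of size $(\lambda_1\lambda_2)^{-(N-2\gamma)/2}|y^1-y^2|^{-(N-2\gamma)}$. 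Since $\sum_i a_i<0$ and $\beta\in(1,N-2\gamma)$, the competing $\varepsilon\lambda^{-\beta}$ and $\lambda^{-(N-2\gamma)}$ terms balance at a finite scale $\lambda_j\sim\varepsilon^{-1/(N-2\gamma-\beta)}\to\infty$, producing a strict interior extremum in $\lambda$; and in each $y^j$-variable the additional term $-\sum a_i|y^j_i-z^j_i|^\beta$ picked up from the $K$-expansion creates a strict local extremum at $y^j=z^j$. A Brouwer/topological-degree argument in a small closed box around $(z^1,z^2,\lambda_\ast)$ then yields the desired critical point of $J_\varepsilon$.

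The principal obstacle is the uniform $C^1$-expansion of $\int K U_{y,\lambda}^{2^*_\gamma}\,dx$ near $(z^j,\lambda_\ast)$: because $K$ is only $C^1$ and its local form involves the non-smooth power $|x_i-z^j_i|^\beta$ with $\beta\in(1,N-2\gamma)$, one must split the integral into inner and outer bubble regions, apply the sharp pointwise bounds on $U_{y,\lambda}$ and its tail, and carry out the estimates carefully enough to differentiate in $(y^j,\lambda_j)$ without losing an order. A secondary, genuinely nonlocal difficulty is the asymptotic expansion, at well-separated peaks and diverging $\lambda_j$, of the cross term $\int U_{y^1,\lambda_1}^{2^*_\gamma-1}U_{y^2,\lambda_2}\,dx$; the condition $\beta<N-2\gamma$ is precisely what ensures that this interaction and the local $\varepsilon\lambda^{-\beta}$ terms balance at a diverging $\lambda_\ast$, and that the remainder contribution from $v_\varepsilon$ remains strictly subleading, so that the finite-dimensional Brouwer argument is not obstructed.
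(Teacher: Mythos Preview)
Your proposal follows essentially the same route as the paper: Lyapunov--Schmidt reduction in $\dot H^\gamma(\mathbb{R}^N)$ to solve the auxiliary equation for $(\alpha,v)$, expansion of the reduced functional using the local form of $K$ near $z^j$ and the interaction $\varepsilon_{12}=(\lambda_1\lambda_2)^{-(N-2\gamma)/2}$, rescaling of $\lambda_j$ so that $\varepsilon\lambda_j^{-\beta_j}$ balances $\varepsilon_{12}$, and a topological-degree argument on the reduced gradient system.

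One point of phrasing deserves care. Your claim that the term $\sum_i a_i|y^j_i-z^j_i|^\beta$ ``creates a strict local extremum at $y^j=z^j$'' is not correct in general: the hypothesis is only $a_i\neq 0$ and $\sum_i a_i<0$, so the $a_i$ may have mixed signs and $z^j$ is typically a saddle of this expression. The paper does not argue via an extremum in $y$; instead it differentiates $J_\varepsilon$ in $y^k_i$ (Lemma~\ref{l:lemma2.3}), obtains a leading term proportional to $a_i^k(y^k_i-z^k_i)$, and after the rescaling $y^k-z^k=\lambda_k^{-1}x^k$ reduces the $y$-equations to $x^k=o_\varepsilon(1)$, giving a diagonal linear map with nonzero degree. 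Combined with the $(t_1,t_2)$-system coming from the $\lambda$-equations (whose Jacobian is shown to be nonzero precisely because $\beta_j<N-2\gamma$), the product of degrees is nonzero. Since you already invoke a Brouwer/degree argument rather than a min--max, this is a correction of wording rather than of strategy; just be sure your final step is formulated for the gradient system, allowing each $\beta_j$ to be distinct, and does not rely on $z^j$ being an extremum.
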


When $\gamma = 1$, Theorem \ref{t:main} was obtained in \cite{CaoNoussairYan2002}. In \cite{ChenZheng2014}, the author studied problem (\ref{e:main}) and extended a result of \cite{AmbrosettiGarciaPeralJFA1999}. Both these two papers imposed a condition relating all the critical points in $\Sigma$ which implies that $\Sigma$ contains only finite number of points. The condition used in Theorem \ref{t:main} is a local one and we only assume that $\Sigma$ has at least two points. If $\Sigma$ contains more than two points, our result also gives a multiplicity result. Also note that, the solutions obtained in \cite{ChenZheng2014} have energy close to $\frac{1}{N}S^{\frac{N}{2\gamma}}_\gamma$ for small $\varepsilon$ and those solutions need not concentrate at some point. But the corresponding energy of solutions in Theorem \ref{t:main} is close to $\frac{2}{N}S^{\frac{N}{2\gamma}}_\gamma$ when $\varepsilon$ is small and these solutions must concentrate at $z^1$ and $z^2$. To prove the main theorem, we will use arguments similar to \cite{CaoNoussairYan2002}, see also \cite{CaoPeng2013}, \cite{CaoPengYanWSCPCR2013}, \cite{PengshuangjieZhouJing2010} and \cite{Yanshusen2000}.

The rest of this paper is organized as follows. In Section 2, we introduce some preliminary knowledge and the variational structure. In Section 3, we expand the functional and its gradient. The basic estimates needed in the proof are given in the Appendices. In Section 4, we give the proof of the Theorem \ref{t:main}.

\section{Preliminaries and variational structure}
Fractional Sobolev spaces have played an important role in harmonic analysis, functional analysis and partial differential equations. We refer the interested readers to \cite{DiNezzaPalatucciValdinoci2012} for an elementary introduction to this topic and a wide list of related references.  Corresponding to the fractional Sobolev spaces, fractional powers of the Laplacian operator and related nonlocal equations arise in numerous of problems both from mathematics and its concrete applications, see, for example, \cite{AbdelouhabBonaFellandSaut1989}, \cite{ChangGonzalez2011}, \cite{ElgartSchleinCPAM2007}, \cite{FrohlichLenzmanCPAM2007}, \cite{KenigMartelRobiano2011}, \cite{LiebYau1987}, \cite{MajdaMclaughlinTabak1997},  \cite{SavinValdini2012} and \cite{Weistein1987} and the references therein.

For $\gamma\in (0, 1)$ and $N\geq 2$, the fractional Sobolev space $H^\gamma(\mathbb{R}^N)$ is defined by
\begin{equation*}
H^\gamma(\mathbb{R}^N) = \displaystyle\left\{u\in L^2(\mathbb{R}^N): \frac{|u(x) - u(y)|}{|x -y|^{N/2 + \gamma}}\in L^2(\mathbb{R}^N\times\mathbb{R}^N)\right\}
\end{equation*}
endowed with the natural norm
\begin{equation*}
\|u\|_{H^\gamma(\mathbb{R}^N)} := \displaystyle\left(\int_{\mathbb{R}^N}|u|^2dx + \int_{\mathbb{R}^N}\int_{\mathbb{R}^N}\frac{|u(x) - u(y)|^2}{|x -y|^{N + 2\gamma}}dxdy\right)^{\frac{1}{2}}.
\end{equation*}
The so-called Gagliardo semi-norm of $u$ is defined by
\begin{equation*}
[u]_{H^\gamma(\mathbb{R}^N)}  := \displaystyle\left(\int_{\mathbb{R}^N}\int_{\mathbb{R}^N}\frac{|u(x) - u(y)|^2}{|x -y|^{N + 2\gamma}}dxdy\right)^{\frac{1}{2}}.
\end{equation*}
We also define $\dot{H}^\gamma(\mathbb{R}^N)$ be the homogeneous version of $H^\gamma(\mathbb{R}^N)$ as the completion of $C^{\infty}_0(\mathbb{R}^N)$ under the Gagliardo semi-norm. Let $H^\gamma_0(\mathbb{R}^N)$ denote the closure of $C^{\infty}_0(\mathbb{R}^N)$ under the the norm $\|\cdot\|_{H^\gamma(\mathbb{R}^N)}$. Note that, in view of Theorem 2.4 in \cite{DiNezzaPalatucciValdinoci2012}, $H^\gamma_0(\mathbb{R}^N)=H^\gamma(\mathbb{R}^N)$.

Let $\mathcal{S}$ be the Schwartz space of rapidly decaying smooth functions on $\mathbb{R}^{N}$. The topology of $\mathcal{S}$ is generated by the seminorms
$$
p_{m}(\varphi) = \displaystyle\sup_{x\in\mathbb{R}^{N}}(1 + |x|)^{m}\sum_{|\gamma|\leq m}|D^{\gamma}\varphi(x)|,\quad m = 0, 1, 2,\cdot\cdot\cdot,
$$
where $\varphi\in \mathcal{S}$. Denote the topological dual of $\mathcal{S}$ by $\mathcal{S}'$. For any $\varphi\in \mathcal{S}$,
$$
\mathcal{F}\varphi(\xi) = \frac{1}{(2\pi)^{N/2}}\int_{\mathbb{R}^{N}}e^{-i\xi\cdot x}\varphi(x)dx
$$
is the usual Fourier transformation of $\varphi$ and one can extend $\mathcal{F}$ from $\mathcal{S}$ to $\mathcal{S}'$. The relation between the fractional operator $(-\Delta)^\gamma$ and the fractional Sobolev space $H^{\gamma}(\mathbb{R}^{N})$ is yield in the following identity,
$$
[u]_\gamma = C\left(\int_{\mathbb{R}^N}|\xi|^{2\gamma}|\mathcal{F}u(\xi)|^2d\xi\right)^{\frac{1}{2}} = C\|(-\Delta)^{\frac{\gamma}{2}}u\|_{L^2(\mathbb{R}^N)}
$$
for a suitable positive constant $C=C(N, \gamma)$.

For $y = (y^1, y^2)\in\mathbb{R}^N\times\mathbb{R}^N$, $\lambda = (\lambda_1, \lambda_2)\in \mathbb{R}^2$, we define the space
\begin{eqnarray*}
E_{y, \lambda}^2 = \{v\in \dot{H}^\gamma(\mathbb{R}^N)|\displaystyle\langle U_{y^j, \lambda_j}, v\rangle &=& \langle\frac{\partial U_{y^j, \lambda_j}}{\partial \lambda_j}, v \rangle = \langle\frac{\partial U_{y^j, \lambda_j}}{\partial y^j_i}, v\rangle = 0,\\ \text{ for } j &=& 1,\cdot\cdot\cdot,2,\quad i = 1, \cdot\cdot\cdot, N\}.
\end{eqnarray*}
Our aim is to find two-peak solutions $u_\varepsilon$ for (\ref{e:main}) of form
\begin{equation*}
u_\varepsilon = \alpha_{1, \varepsilon}U_{y^1_\varepsilon, \lambda_{1, \varepsilon}} + \alpha_{2, \varepsilon}U_{y^2_\varepsilon, \lambda_{2, \varepsilon}} + v_\varepsilon,
\end{equation*}
with $\alpha_\varepsilon = (\alpha_{1, \varepsilon}, \alpha_{2, \varepsilon})\to (1, 1)$, $y_\varepsilon = (y^1_\varepsilon, y^2_\varepsilon)\to (z^1, z^2)$, $\lambda_{j, \varepsilon}\to +\infty$ for $j = 1, 2$ and $v_\varepsilon \in E_{y_\varepsilon, \lambda_\varepsilon}^2$ satisfying $\|v_\varepsilon\|_{\dot{H}^\gamma(\mathbb{R}^N)}\to 0$ as $\varepsilon\to 0$.

For each $z^1$, $z^2\in \Sigma$, $z^1\neq z^2$, $\mu > 0$, let us define
\begin{eqnarray*}
D_\mu = \{(y, \lambda)\,\,|\,\, y = (y^1, y^2)\in \overline{B_\mu(z^1)}\times \overline{B_\mu(z^2)}, \lambda = (\lambda_1, \lambda_2)\in (\frac{1}{\mu}, +\infty)\times (\frac{1}{\mu}, +\infty)\}
\end{eqnarray*}
and
\begin{eqnarray*}
M_\mu &=& \{(\alpha, y, \lambda, v)|\alpha = (\alpha_1, \alpha_2)\in\mathbb{R}^+\times\mathbb{R}^+, (y, \lambda)\in D_\mu, v\in E_{y, \lambda}^2,\\ &&|\alpha_1 - 1|\leq \mu, |\alpha_2 - 1|\leq \mu, \|v\|\leq \mu\}.
\end{eqnarray*}
Furthermore, define
\begin{equation}\label{e:functional}
I_\varepsilon (u) = \frac{1}{2}\displaystyle\int_{\mathbb{R}^N}|(-\Delta)^{\frac{\gamma}{2}} u(x)|^2dx - \frac{N - 2\gamma}{2N}\displaystyle\int_{\mathbb{R}^N}(1 + \varepsilon K)|u|^{\frac{2N}{N - 2\gamma}}dx
\end{equation}
and
\begin{equation*}
J_\varepsilon (\alpha, y, \lambda, v) = I_\varepsilon (\sum_{j = 1}^2\alpha_j U_{y^j, \lambda_j} + v).
\end{equation*}

It is well known that, when $\mu > 0$ is sufficiently small, $(\alpha, y, \lambda, v)\in M_\mu$ is a critical point of $J_\varepsilon$ in $M_\varepsilon$ if and only if $u = \sum_{j = 1}^2\alpha_j U_{y^j, \lambda_j} + v$ is a critical point of $I_\varepsilon$ in $\dot{H}^\gamma(\mathbb{R}^N)$. Moreover, solutions of form $u = \sum_{j = 1}^2\alpha_j U_{y^j, \lambda_j} + v$ for (\ref{e:main}) are positive. For completeness, we will give the proof of these two facts in Lemma \ref{e:criticalpoint} and Lemma \ref{e:positive}. In order to prove the main theorem, we only need to find a critical point of $J_\varepsilon$ in $M_\varepsilon$.

From the Lagrange multiplier theorem, $(\alpha, y, \lambda, v)\in M_\mu$ is a critical point of $J_\varepsilon$ in $M_\varepsilon$ if and only if the following equations are satisfied,
\begin{equation}\label{e:lagrangian1}
\frac{\partial J_\varepsilon}{\partial \alpha_j} = 0,\quad j = 1, 2,
\end{equation}
\begin{equation}\label{e:lagrangian2}
\frac{\partial J_\varepsilon}{\partial v} = \sum_{j = 1}^2A_j U_{y^j, \lambda_j} + \sum_{j = 1}^2B_j \frac{\partial U_{y^j, \lambda_j}}{\partial \lambda_j} + \sum_{j = 1}^2\sum_{i = 1}^NC_{ji}\frac{\partial U_{y^j, \lambda_j}}{\partial y^j_i},
\end{equation}
\begin{equation}\label{e:lagrangian3}
\displaystyle\frac{\partial J_\varepsilon}{\partial y^j_i} = B_j \langle\frac{\partial ^2U_{y^j, \lambda_j}}{\partial \lambda_j\partial y^j_i}, v\rangle + \sum_{l = 1}^NC_{jl}\langle\frac{\partial^2 U_{y^j, \lambda_j}}{\partial y^j_l\partial y^j_i}, v\rangle,\quad i = 1,\cdot\cdot\cdot,N, \quad j = 1,2,
\end{equation}
\begin{equation}\label{e:lagrangian4}
\displaystyle\frac{\partial J_\varepsilon}{\partial \lambda_j} = B_j \langle\frac{\partial ^2U_{y^j, \lambda_j}}{\partial \lambda_j^2}, v\rangle + \sum_{l = 1}^NC_{jl}\langle\frac{\partial^2 U_{y^j, \lambda_j}}{\partial y^j_l\partial \lambda_j}, v\rangle,\quad j = 1,2,
\end{equation}
for some $A_j$, $B_j$, $C_{ji}\in \mathbb{R}$, $j = 1,2$, $i = 1,\cdot\cdot\cdot, N$.

\section{Expansion of $J_\varepsilon$ and its gradient}
Based on the estimates in Appendices B and C, we will give the expansion of $J_\varepsilon$ and its gradient.
Denote $\varepsilon_{12} = \frac{1}{(\lambda_1\lambda_2)^{\frac{N - 2\gamma}{2}}}$.

For $z^1$, $z^2\in \Sigma$ and $(y, \lambda)\in D_\mu$. Let $\hat{\alpha} = ((1 + \varepsilon K(z^1))^{-\frac{N - 2\gamma}{4}}, (1 + \varepsilon K(z^2))^{-\frac{N - 2\gamma}{4}})$, $\bar{\alpha} = \alpha - \hat{\alpha}$, $\omega = (\bar{\alpha}, v)\in \mathbb{R}^2\times E^2_{y, \lambda}$. Then $\hat{J}_{\varepsilon}(y, \lambda, \omega): = J_\varepsilon(\alpha, y, \lambda, v)$ can be expanded by
\begin{equation*}
\hat{J}_{\varepsilon}(y, \lambda, \omega) = \hat{J}_{\varepsilon}(y, \lambda, 0) + \langle f_\varepsilon, \omega\rangle + \frac{1}{2}\langle Q_\varepsilon \omega, \omega\rangle + R_\varepsilon(\omega).
\end{equation*}
Here $f_\varepsilon \in \mathbb{R}^2\times E^2_{y, \lambda}$ is a functional given by
\begin{eqnarray*}
&&\langle f_\varepsilon, \omega\rangle = -\displaystyle\int_{\mathbb{R}^n}(1 + \varepsilon K)\left(\sum_{j = 1}^2\hat{\alpha}_{j} U_{y^j, \lambda_{j}}\right)^{2^*_\gamma - 1}vdx + \displaystyle\sum_{k = 1}^2\bar{\alpha}_k\\ &&\left(\left\langle \sum_{j = 1}^2\hat{\alpha}_{j} U_{y^j, \lambda_{j}}, U_{y^k, \lambda_k}\right\rangle-\int_{\mathbb{R}^n}(1 + \varepsilon K)\left(\sum_{j = 1}^2\hat{\alpha}_{j} U_{y^j, \lambda_{j}}\right)^{2^*_\gamma - 1}U_{y^k, \lambda_k}dx\right),
\end{eqnarray*}
$Q_\varepsilon$ is a quadratic form on $\mathbb{R}^2\times E^2_{y, \lambda}$ defined as
\begin{eqnarray*}
&&\langle Q_\varepsilon \omega, \omega\rangle\\ &&= \displaystyle\sum_{k = 1}^2\bar{\alpha}_k\bar{\alpha}_l\left(\left\langle U_{y^l, \lambda_l}, U_{y^k, \lambda_k}\right\rangle
-(2^*_\gamma - 1)\int_{\mathbb{R}^n}(1 + \varepsilon K)\left(\sum_{j = 1}^2\hat{\alpha}_{j} U_{y^j, \lambda_{j}}\right)^{2^*_\gamma - 2}U_{y^l, \lambda_l}U_{y^k, \lambda_k}dx\right)
\end{eqnarray*}
\begin{eqnarray*}
&&+ \|v\|^2_{\dot{H}^\gamma(\mathbb{R}^N)} -(2^*_\gamma - 1)\displaystyle\int_{\mathbb{R}^n}(1 + \varepsilon K)\left(\sum_{j = 1}^2\hat{\alpha}_{j} U_{y^j, \lambda_{j}}\right)^{2^*_\gamma - 2}v^2dx\\
&&- 2(2^*_\gamma - 1)\displaystyle\sum_{k = 1}^{2}\bar{\alpha}_k\int_{\mathbb{R}^n}(1 + \varepsilon K)\left(\sum_{j = 1}^2\hat{\alpha}_{j} U_{y^j, \lambda_{j}}\right)^{2^*_\gamma - 2}U_{y^k, \lambda_k}vdx
\end{eqnarray*}
and $R_\varepsilon$ is the remainder term satisfying
\begin{equation*}
D^{(i)}R_\varepsilon(\omega) = O(\|\omega\|^{2 + \theta - i}_{\dot{H}^\gamma(\mathbb{R}^N)})\quad i = 0, 1, 2
\end{equation*}
for some constant $\theta > 0$.

From Lemmas \ref{l:lemmaC1} and \ref{l:lemmaC2} in Appendix B, if $\mu > 0$, $\varepsilon > 0$ are small enough, then for each $(y, \lambda)\in D_\mu$, $Q_\varepsilon$ is invertible. Moreover, there exists $C > 0$, which is independent of $(y, \lambda)$, such that $\|Q_\varepsilon^{-1}\|_{\dot{H}^\gamma(\mathbb{R}^N)}\leq C$. From Lemmas \ref{l:lemmaB2}, \ref{l:lemmaB3} and \ref{l:lemmaB4} in Appendix C, we have
$$
|\langle f_\varepsilon, \omega\rangle| = O\left(\varepsilon \sum_{j = 1}^2\left(|y^j - z^j|^{\beta_j} + \frac{1}{\lambda_j^{\beta_j}}\right) + \varepsilon_{12}^{\frac{1}{2} + \tau}\right)\|\omega\|_{\dot{H}^\gamma(\mathbb{R}^N)},
$$
where $\tau > 0$ is some constant.

Consider the equation $D\hat{J}_{\varepsilon}(y, \lambda, \omega) = 0$, where $D$ is the derivative with respect to the variable $\omega$. It is equivalent to
\begin{equation}\label{e:firstderivative}
f_\varepsilon + Q_\varepsilon \omega + DR(\omega) = 0.
\end{equation}
By the implicit function theorem, there exists a unique $C^1$ map $w(y, \lambda)$ satisfying (\ref{e:firstderivative}) and
$$
\|\omega\|_{\dot{H}^\gamma(\mathbb{R}^N)}\leq C\|f_\varepsilon\|_{\dot{H}^\gamma(\mathbb{R}^N)}.
$$
Thus we obtain $\omega(y, \lambda)\in \mathbb{R}^2\times E^2_{y, \lambda}$ such that $D\hat{J}_{\varepsilon}(y, \lambda, \omega) = 0$ for each fixed $(y, \lambda)\in D_\mu$. So we have the following
\begin{lemma}\label{l:lemma2.1}
Suppose $z^1$, $z^2\in \Sigma$ and $(y, \lambda)\in D_\mu$. Then there exist $\varepsilon_0 > 0$ and $\mu_0 > 0$ such that for $\varepsilon\in (0, \varepsilon_0)$ and $\mu\in (0, \mu_0)$, there is a unique $C^1$ map $(y, \lambda)\in D_\mu\to (\alpha_\varepsilon(y, \lambda), v_\varepsilon(y, \lambda))\in \mathbb{R}^2\times \dot{H}^\gamma(\mathbb{R}^N)$ such that $v_\varepsilon\in E^2_{y, \lambda}$, $(\alpha_\varepsilon(y, \lambda), y, \lambda,  v_\varepsilon(y, \lambda))$ satisfies (\ref{e:lagrangian1}) and (\ref{e:lagrangian2}), $\alpha_\varepsilon = (\alpha_{\varepsilon, 1}, \alpha_{\varepsilon, 2})$ and $v_\varepsilon$ satisfy the following estimate,
\begin{equation*}\label{e:estimate1}
\displaystyle\sum_{j = 1}^2\left|\alpha_{j, \varepsilon} - (1 + \varepsilon K(z^j))^{-\frac{N - 2\gamma}{4}}\right| + \|v_\varepsilon\|_{\dot{H}^\gamma(\mathbb{R}^N)} = O\left(\varepsilon \sum_{j = 1}^2\left(|y^j - z^j|^{\beta_j} + \frac{1}{\lambda_j^{\beta_j}}\right) + \varepsilon_{12}^{\frac{1}{2} + \tau}\right)
\end{equation*}
as $\varepsilon\to 0$, where $\tau > 0$ is some constant.
\end{lemma}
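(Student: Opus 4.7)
The expansion of $\hat J_\varepsilon$ displayed just above the statement identifies the Lagrange conditions (\ref{e:lagrangian1}) and (\ref{e:lagrangian2}) with the stationary equation $D_\omega\hat J_\varepsilon(y,\lambda,\omega)=0$, that is, with (\ref{e:firstderivative}):
\[
F_\varepsilon(y,\lambda,\omega) \;:=\; f_\varepsilon + Q_\varepsilon\omega + DR_\varepsilon(\omega) \;=\; 0,\qquad \omega=(\bar\alpha,v)\in\mathbb{R}^2\times E^2_{y,\lambda}.
\]
The plan is to solve $F_\varepsilon=0$ for $\omega$ uniformly in $(y,\lambda)\in D_\mu$ by a contraction-mapping/implicit function theorem argument, and to recover the desired pair via $\alpha_\varepsilon=\hat\alpha+\bar\alpha$ and $v_\varepsilon=v$.

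The first step is to invoke Lemmas \ref{l:lemmaC1} and \ref{l:lemmaC2} of Appendix~B, already quoted above, to obtain that $Q_\varepsilon:\mathbb{R}^2\times E^2_{y,\lambda}\to\mathbb{R}^2\times E^2_{y,\lambda}$ is invertible with $\|Q_\varepsilon^{-1}\|\le C$ uniformly in $(y,\lambda)\in D_\mu$, provided $\mu,\varepsilon$ are small enough. Combined with the quoted bound
\[
\|f_\varepsilon\|_{\dot{H}^\gamma(\mathbb{R}^N)}=O(r_\varepsilon),\qquad r_\varepsilon:=\varepsilon\sum_{j=1}^2\Bigl(|y^j-z^j|^{\beta_j}+\lambda_j^{-\beta_j}\Bigr)+\varepsilon_{12}^{1/2+\tau},
\]
and with the remainder estimates $\|DR_\varepsilon(\omega)\|=O(\|\omega\|^{1+\theta})$ and $\|D^2R_\varepsilon(\omega)\|=O(\|\omega\|^\theta)$, I rewrite $F_\varepsilon=0$ as the fixed-point problem
\[
\omega = T_\varepsilon(\omega) := -Q_\varepsilon^{-1}\bigl(f_\varepsilon + DR_\varepsilon(\omega)\bigr)
\]
on the closed ball $\{\omega:\|\omega\|_{\dot{H}^\gamma(\mathbb{R}^N)}\le\rho\}$ in $\mathbb{R}^2\times E^2_{y,\lambda}$ with $\rho=Kr_\varepsilon$. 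For $K$ large and $\varepsilon,\mu$ small, the superlinear bound on $DR_\varepsilon$ ensures that $T_\varepsilon$ maps the ball into itself and has Lipschitz constant at most $C\rho^\theta<1$, so the Banach fixed point theorem supplies a unique $\omega(y,\lambda)$ in the ball with $\|\omega\|\le Cr_\varepsilon$; unpacking $\omega=(\bar\alpha,v)$ yields exactly the stated estimate.

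For the $C^1$ dependence on $(y,\lambda)$, note that $F_\varepsilon$ is jointly $C^1$ on $D_\mu\times\mathbb{R}^2\times E^2_{y,\lambda}$: the bubbles $U_{y^j,\lambda_j}$ and their first and second derivatives are smooth in $(y^j,\lambda_j)$, and the orthogonal projector onto $E^2_{y,\lambda}$ varies smoothly with $(y,\lambda)\in D_\mu$ because the generators $U_{y^j,\lambda_j}$, $\partial_{\lambda_j}U_{y^j,\lambda_j}$, $\partial_{y^j_i}U_{y^j,\lambda_j}$ stay linearly independent for $\mu$ small. At the fixed point, $D_\omega F_\varepsilon=Q_\varepsilon+D^2R_\varepsilon(\omega)$ is still invertible by Step~1 and the smallness of $\|\omega\|$, so the implicit function theorem upgrades $\omega(\cdot,\cdot)$ to a $C^1$ map, and hence so are $\alpha_\varepsilon$ and $v_\varepsilon$. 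The only non-routine ingredient is the uniform coercivity/invertibility of $Q_\varepsilon$ from Appendix~B; granted that, everything else is a standard IFT argument that exploits the superlinear character of the remainder $R_\varepsilon$.
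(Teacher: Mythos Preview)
Your proposal is correct and follows essentially the same approach as the paper: the paper's argument, given in the paragraphs immediately preceding the lemma, also reduces to solving $f_\varepsilon+Q_\varepsilon\omega+DR_\varepsilon(\omega)=0$ via the invertibility of $Q_\varepsilon$ (Lemmas~\ref{l:lemmaC1}--\ref{l:lemmaC2}), the bound on $f_\varepsilon$ (Lemmas~\ref{l:lemmaB2}--\ref{l:lemmaB4}), and the implicit function theorem. Your write-up is simply more explicit, spelling out the contraction-mapping step and the $C^1$-dependence argument that the paper compresses into the single phrase ``by the implicit function theorem.''
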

\begin{lemma}\label{l:lemma2.2}
Let $(y, \lambda)\in D_\mu$, $(\alpha, v)$ be obtained as in Lemma \ref{l:lemma2.1}. Then for $\mu > 0$ and $\varepsilon > 0$ small enough, it holds that
\begin{eqnarray*}
\frac{\partial J_\varepsilon (\alpha, y, \lambda, v)}{\partial \lambda_k}& = & C_{N, \beta_k} \frac{\varepsilon}{\lambda_k^{\beta_k + 1}}\displaystyle\sum_{i = 1}^Na_i^k + \frac{C_0\varepsilon_{12}}{\lambda_k|z^1 - z^2|^{N - 2\gamma}} + O\left(\frac{\varepsilon\varepsilon_{12}}{\lambda_k}\right) +
O\left(\frac{\varepsilon_{12}^{1 + \tau_1}}{\lambda_k}\right)\\ && +\,\, O\left(\frac{\varepsilon}{\lambda_k^{\beta_k}}|y^k - z^k|\right)
+ O\left(\frac{\varepsilon}{\lambda_k}\sum_{j = 1}^2\left(\frac{1}{\lambda_j^{\beta_j +\sigma}} + |y^j - z^j|^{\beta_j + \sigma}\right)\right),
\end{eqnarray*}
where $C_{N, \beta_k} > 0$ and $C_0 > 0$ are constants and $\tau_1 = \min\{\tau, \frac{1}{N}\}$.
\end{lemma}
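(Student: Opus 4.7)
The plan is to compute $\partial J_\varepsilon/\partial\lambda_k$ by direct differentiation of $I_\varepsilon$ against $\alpha_k\partial_{\lambda_k}U_{y^k,\lambda_k}$, exploit the orthogonality built into $E^2_{y,\lambda}$ together with the $\lambda_k$-invariance of $\int U_{y^k,\lambda_k}^{2^*_\gamma}\,dx$ to cancel the large-order terms, and then match what remains against the two asserted leading contributions using the estimates in Appendices B and C. By the chain rule,
\begin{equation*}
\frac{\partial J_\varepsilon}{\partial\lambda_k}(\alpha,y,\lambda,v) = \alpha_k\Bigl[\langle u,\partial_{\lambda_k}U_{y^k,\lambda_k}\rangle - \int_{\mathbb{R}^N}(1+\varepsilon K)\,u^{2^*_\gamma - 1}\,\partial_{\lambda_k}U_{y^k,\lambda_k}\,dx\Bigr],
\end{equation*}
where $u = \alpha_k U_{y^k,\lambda_k} + \alpha_{k'}U_{y^{k'},\lambda_{k'}} + v$ and $k' = 3-k$. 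The constraint $v\in E^2_{y,\lambda}$ kills $\langle v,\partial_{\lambda_k}U_{y^k,\lambda_k}\rangle$; and since $(-\Delta)^\gamma U_{y^j,\lambda_j} = U_{y^j,\lambda_j}^{2^*_\gamma - 1}$ together with $\partial_{\lambda_k}\int U_{y^k,\lambda_k}^{2^*_\gamma}\,dx = 0$ forces $\int U_{y^k,\lambda_k}^{2^*_\gamma - 1}\partial_{\lambda_k}U_{y^k,\lambda_k}\,dx = 0$, the first bracketed term reduces to $\alpha_{k'}\partial_{\lambda_k}\langle U_{y^k,\lambda_k},U_{y^{k'},\lambda_{k'}}\rangle$.

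For the nonlinear term, I would decompose $\mathbb{R}^N$ into neighborhoods of $y^k$ and $y^{k'}$ and Taylor-expand $(\alpha_k U_{y^k,\lambda_k} + \alpha_{k'}U_{y^{k'},\lambda_{k'}})^{2^*_\gamma - 1}$ about the dominant bubble in each region. Combined with the identity $(2^*_\gamma - 1)\int U_k^{2^*_\gamma - 2}U_{k'}\partial_{\lambda_k}U_k\,dx = \partial_{\lambda_k}\langle U_k,U_{k'}\rangle$, this yields
\begin{equation*}
\int u^{2^*_\gamma - 1}\partial_{\lambda_k}U_{y^k,\lambda_k}\,dx = \alpha_{k'}\bigl(\alpha_k^{2^*_\gamma - 2} + \alpha_{k'}^{2^*_\gamma - 2}\bigr)\partial_{\lambda_k}\langle U_{y^k,\lambda_k},U_{y^{k'},\lambda_{k'}}\rangle + \mathcal{R},
\end{equation*}
with $\mathcal{R}$ collecting $v$-dependent pieces and higher-order cross-interaction remainders. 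Combining the ``$1$-part'' of $(1+\varepsilon K)$ with the first bracketed term of $\partial J_\varepsilon/\partial\lambda_k$ produces the coefficient $\alpha_k\alpha_{k'}\bigl(1 - \alpha_k^{2^*_\gamma - 2} - \alpha_{k'}^{2^*_\gamma - 2}\bigr)$ multiplying $\partial_{\lambda_k}\langle U_k,U_{k'}\rangle$; since $\alpha_j\to 1$ the coefficient equals $-1 + O(\varepsilon)$. Inserting the Appendix~C asymptotics $\langle U_k,U_{k'}\rangle = c\,\varepsilon_{12}/|y^1 - y^2|^{N-2\gamma}\bigl(1 + O(\varepsilon_{12}^{\tau_1})\bigr)$, differentiating in $\lambda_k$, and replacing $|y^1 - y^2|$ by $|z^1 - z^2|$, one extracts the asserted $C_0\varepsilon_{12}/(\lambda_k|z^1-z^2|^{N-2\gamma})$ up to errors absorbed into $O(\varepsilon\varepsilon_{12}/\lambda_k) + O(\varepsilon_{12}^{1+\tau_1}/\lambda_k)$. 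The $\varepsilon K$-piece $-\alpha_k\varepsilon\int K(x)\,u^{2^*_\gamma - 1}\,\partial_{\lambda_k}U_{y^k,\lambda_k}\,dx$ is handled by localizing near $y^k$ (where $u\approx\alpha_k U_{y^k,\lambda_k}$), substituting the local expansion $K(x) = K(z^k) + \sum_i a_i^k|x_i - z_i^k|^{\beta_k} + O(|x-z^k|^{\beta_k+\sigma})$, and rescaling $x = y^k + \lambda_k^{-1}z$: the $K(z^k)$ contribution vanishes by the same scaling cancellation, the $a_i^k$ term yields the main contribution $C_{N,\beta_k}\varepsilon\lambda_k^{-\beta_k - 1}\sum_i a_i^k$, and the offset $y^k\ne z^k$ together with the Taylor remainder in $K$ produces the remaining $O(\varepsilon\lambda_k^{-\beta_k}|y^k - z^k|) + O\bigl((\varepsilon/\lambda_k)\sum_j(\lambda_j^{-\beta_j-\sigma} + |y^j-z^j|^{\beta_j + \sigma})\bigr)$.

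The $v$-dependent remainders in $\mathcal{R}$ are controlled via H\"older's inequality together with the bound $\|v\|_{\dot{H}^\gamma} = O\bigl(\varepsilon\sum_j(|y^j-z^j|^{\beta_j} + \lambda_j^{-\beta_j}) + \varepsilon_{12}^{1/2+\tau}\bigr)$ from Lemma~\ref{l:lemma2.1}; noting that $\|\partial_{\lambda_k}U_{y^k,\lambda_k}\|_{\dot{H}^\gamma} = O(1/\lambda_k)$, each such term fits inside the stated errors. The main technical obstacle will be the two-region decomposition needed to obtain the coefficient $\alpha_k^{2^*_\gamma - 2} + \alpha_{k'}^{2^*_\gamma - 2}$ (rather than just $\alpha_k^{2^*_\gamma - 2}$): one must combine the near-$y^k$ Taylor expansion with the contribution coming from the slowly-decaying tail of $\partial_{\lambda_k}U_{y^k,\lambda_k}$ at $y^{k'}$, and verify that the resulting crossover errors lie in $O(\varepsilon_{12}^{1+\tau_1}/\lambda_k)$ through the convolution-type estimates of Appendix~C.
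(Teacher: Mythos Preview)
Your overall strategy matches the paper's: differentiate $I_\varepsilon$ in the direction $\alpha_k\partial_{\lambda_k}U_{y^k,\lambda_k}$, use the orthogonality in $E^2_{y,\lambda}$ and the scaling identity $\int U_k^{2^*_\gamma-1}\partial_{\lambda_k}U_k=0$ to kill the $O(1)$ pieces, and then read off the two leading contributions from the Appendix~C estimates (the paper uses Lemmas~\ref{l:lemmaB8} and~\ref{l:lemmaB10} directly in place of your rescaling computation and your differentiation of the asymptotic for $\langle U_k,U_{k'}\rangle$). The paper expands $(\sum_j\alpha_jU_j)^{2^*_\gamma-1}$ globally via pointwise inequalities rather than through a two-region split, which is simpler but leads to exactly the same collection of terms.

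There is, however, a genuine gap in your handling of the linear-in-$v$ remainder. A bare H\"older estimate on
\[
(2^*_\gamma-1)\int_{\mathbb{R}^N}(1+\varepsilon K)\Bigl(\sum_j\alpha_jU_j\Bigr)^{2^*_\gamma-2}\partial_{\lambda_k}U_k\,v\,dx
\]
gives only $O(\lambda_k^{-1})\|v\|_{\dot H^\gamma}$, and inserting the bound on $\|v\|$ from Lemma~\ref{l:lemma2.1} produces $O\bigl(\tfrac{\varepsilon}{\lambda_k}\sum_j(|y^j-z^j|^{\beta_j}+\lambda_j^{-\beta_j})\bigr)+O\bigl(\varepsilon_{12}^{1/2+\tau}/\lambda_k\bigr)$. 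Neither piece is absorbed by the errors stated in the lemma: the first is missing the extra $\sigma$ in the exponent, and the second has exponent $\tfrac12+\tau$ instead of $1+\tau_1$. The paper closes this via Lemma~\ref{l:lemmaB7}, whose point is the additional cancellation
\[
\int_{\mathbb{R}^N}U_k^{2^*_\gamma-2}\,\partial_{\lambda_k}U_k\,v\,dx
=\frac{1}{2^*_\gamma-1}\,\langle\partial_{\lambda_k}U_k,v\rangle=0.
\]
Once this dominant single-bubble contribution is removed, only the $\varepsilon K$-correction and the cross-bubble pieces survive, so the linear-in-$v$ term is itself of size $O\bigl(\tfrac{\varepsilon}{\lambda_k}\sum_j(\cdot)^{\beta_j}+\varepsilon_{12}^{1/2+\tau}/\lambda_k\bigr)\|v\|$; multiplying by the Lemma~\ref{l:lemma2.1} bound on $\|v\|$ then gives the squared smallness that matches the stated errors. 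You need to invoke this cancellation explicitly rather than appeal to H\"older alone.
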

\begin{proof}
Without loss of generality, we assume $k = 1$. By Lemma \ref{l:lemmaB7} and Lemma \ref{l:lemma2.1}, we have
\begin{eqnarray*}
&&\frac{\partial J_\varepsilon (\alpha, y, \lambda, v)}{\partial \lambda_1}=  \alpha_1 \left\langle\sum_{j = 1}^2\alpha_j U_{y^j, \lambda_j} + v, \frac{\partial U_{y^j, \lambda_j}}{\partial \lambda_1}\right\rangle\\ &&\,\,\,\,\,\, - \alpha_1\displaystyle\int_{\mathbb{R}^n}(1 + \varepsilon K)\left|\sum_{j = 1}^2\alpha_{j} U_{y^j, \lambda_{j}} + v\right|^{2^*_\gamma - 2}\left(\sum_{j = 1}^2\alpha_{j} U_{y^j, \lambda_{j}} + v\right)\frac{\partial U_{y^j, \lambda_j}}{\partial \lambda_1}dx\\
&& = \alpha_1\alpha_{2}\displaystyle\int_{\mathbb{R}^n}U_{y^2, \lambda_{2}}^{2^*_\gamma - 1}\frac{\partial U_{y^1, \lambda_1}}{\partial \lambda_1}dx - \alpha_1\displaystyle\int_{\mathbb{R}^n}(1 + \varepsilon K)\left(\sum_{j = 1}^2\alpha_{j} U_{y^j, \lambda_{j}}\right)^{2^*_\gamma - 1}\frac{\partial U_{y^1, \lambda_1}}{\partial \lambda_1}dx\\
&&\,\,\,\,\,\, -\alpha_1(2^*_\gamma - 1)\displaystyle\int_{\mathbb{R}^n}(1 + \varepsilon K)\left(\sum_{j = 1}^2\alpha_{j} U_{y^j, \lambda_{j}}\right)^{2^*_\gamma - 2}\frac{\partial U_{y^1, \lambda_1}}{\partial \lambda_1}vdx + O(\frac{1}{\lambda_1})\|v\|^2_{\dot{H}^\gamma(\mathbb{R}^N)}\\
&& = \alpha_1(\alpha_2 - \alpha_2^{2^*_\gamma - 1})\displaystyle\int_{\mathbb{R}^n}U_{y^2, \lambda_{2}}^{2^*_\gamma - 1}\frac{\partial U_{y^1, \lambda_1}}{\partial \lambda_1}dx - \alpha_1^{2^*_\gamma}\varepsilon\displaystyle\int_{\mathbb{R}^n}K\alpha_{j} U_{y^1, \lambda_{1}}^{2^*_\gamma - 1}\frac{\partial U_{y^1, \lambda_1}}{\partial \lambda_1}dx\\
&&\,\,\,\,\,\, -(2^*_\gamma - 1)\alpha_1^{2^*_\gamma - 1}\alpha_2\displaystyle\int_{\mathbb{R}^n}\alpha_{j} U_{y^1, \lambda_{1}}^{2^*_\gamma - 2}\frac{\partial U_{y^1, \lambda_1}}{\partial \lambda_1}U_{y^2, \lambda_2}dx + O\left(\frac{\varepsilon\varepsilon_{12}}{\lambda_1}\right)\\
&&\,\,\,\,\,\, +\,\,
O\left(\frac{\varepsilon_{12}^{1 + \tau_1}}{\lambda_1}\right)+ O\left(\frac{\varepsilon}{\lambda_1}\sum_{j = 1}^2\left(\frac{1}{\lambda_j^{\beta_j + \sigma}} +  |y^j - z^j|^{\beta_j + \sigma}\right)\right)\\
&& =  -\varepsilon\displaystyle\int_{\mathbb{R}^n}K U_{y^1, \lambda_{1}}^{2^*_\gamma - 1}\frac{\partial U_{y^1, \lambda_1}}{\partial \lambda_1}dx -(2^*_\gamma - 1)\displaystyle\int_{\mathbb{R}^n}U_{y^1, \lambda_{1}}^{2^*_\gamma - 2}\frac{\partial U_{y^1, \lambda_1}}{\partial \lambda_1}U_{y^2, \lambda_2}dx\\ && \,\,\,\,\,\,+\,\, O\left(\frac{\varepsilon\varepsilon_{12}}{\lambda_1}\right) + O\left(\frac{\varepsilon_{12}^{1 + \tau_1}}{\lambda_1}\right)+ O\left(\frac{\varepsilon}{\lambda_1}\sum_{j = 1}^2\left(\frac{1}{\lambda_j^{\beta_j + \sigma}} + |y^j - z^j|^{\beta_j + \sigma}\right)\right).
\end{eqnarray*}
From Lemmas \ref{l:lemmaB8} and \ref{l:lemmaB10}, we have the assertion of this lemma.
\end{proof}
\begin{lemma}\label{l:lemma2.3}
Under the same assumptions of Lemma \ref{l:lemma2.2}, it holds that
\begin{eqnarray*}
\frac{\partial J_\varepsilon (\alpha, y, \lambda, v)}{\partial y^k_i}& = & - D_{N, \beta_k} a_i^k\frac{\varepsilon}{\lambda_k^{\beta_k - 2}}(y^k_i - z^k_i) + O\left(\lambda_k\varepsilon_{12}^{1 + \tau_1}\right) - \frac{C_2(y^k_i - y^l_i)}{(\lambda_1\lambda_2)^{\frac{N - 2\gamma}{2}}}\\ && +\,\,
O\left(\frac{\varepsilon}{\lambda_k^{\beta_k - 1}}\lambda_k^2|y^k - z^k|^2\right) + O\left(\varepsilon\lambda_k\varepsilon_{12}\right) +
O\left(\lambda_k\varepsilon_{12}^{1 + \tau_1}\right)\\ &&
+\,\, O\left(\lambda_k\varepsilon\sum_{j = 1}^2\left(\frac{1}{\lambda_j^{\beta_j +\hat{\sigma}}} + |y^j - z^j|^{\beta_j + \hat{\sigma}}\right)\right) + O\left(\varepsilon_{12}^{\frac{N - \gamma}{N - 2\gamma}}\right),
\end{eqnarray*}
where $D_{N, \beta_k} > 0$ and $C_2 > 0$ are constants, $\tau_1$ and $\hat{\sigma}$ are the same as in Lemma \ref{l:lemma2.2}.
\end{lemma}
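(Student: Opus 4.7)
The plan is to mirror the argument in Lemma \ref{l:lemma2.2}, replacing the $\lambda_k$-derivative by the $y^k_i$-derivative throughout. Take $k=1$ and write $l=2$ without loss of generality. Differentiating $J_\varepsilon(\alpha, y, \lambda, v) = I_\varepsilon\bigl(\sum_{j=1}^2 \alpha_j U_{y^j, \lambda_j} + v\bigr)$ with respect to $y^1_i$ and using $v \in E^2_{y,\lambda}$ to annihilate the pairing $\langle v, \partial U_{y^1,\lambda_1}/\partial y^1_i\rangle$ gives
\begin{equation*}
\frac{\partial J_\varepsilon}{\partial y^1_i} = \alpha_1\alpha_2 \int_{\mathbb{R}^N} U_{y^2,\lambda_2}^{2^*_\gamma-1}\frac{\partial U_{y^1,\lambda_1}}{\partial y^1_i}\,dx - \alpha_1 \int_{\mathbb{R}^N}(1+\varepsilon K)\Bigl(\sum_{j=1}^2 \alpha_j U_{y^j,\lambda_j}\Bigr)^{2^*_\gamma-1}\frac{\partial U_{y^1,\lambda_1}}{\partial y^1_i}\,dx + \mathcal{R},
\end{equation*}
where $\mathcal{R}$ collects the $v$-dependent contributions. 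Using the $y$-derivative analog of Lemma \ref{l:lemmaB7} together with the bound on $\|v\|_{\dot{H}^\gamma(\mathbb{R}^N)}$ from Lemma \ref{l:lemma2.1} and the scaling estimate $\|\partial U_{y^1,\lambda_1}/\partial y^1_i\|_{\dot{H}^\gamma(\mathbb{R}^N)} = O(\lambda_1)$, the remainder $\mathcal{R}$ is absorbed into the error terms of the lemma.

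Next, writing $\alpha_j = 1 + O(\varepsilon)$ and expanding $\bigl(\sum_j \alpha_j U_{y^j,\lambda_j}\bigr)^{2^*_\gamma - 1}$ as in the proof of Lemma \ref{l:lemma2.2}, one isolates two leading contributions after cancellation: the $\varepsilon$-free diagonal piece $-\alpha_1^{2^*_\gamma}\int U_{y^1,\lambda_1}^{2^*_\gamma-1}\partial_{y^1_i}U_{y^1,\lambda_1}\,dx$ vanishes by translation invariance of $\int U_{y,\lambda}^{2^*_\gamma}\,dx$, leaving
\begin{equation*}
-\varepsilon\int_{\mathbb{R}^N} K U_{y^1,\lambda_1}^{2^*_\gamma-1}\frac{\partial U_{y^1,\lambda_1}}{\partial y^1_i}\,dx \quad\text{and}\quad -(2^*_\gamma-1)\int_{\mathbb{R}^N} U_{y^1,\lambda_1}^{2^*_\gamma-2}\frac{\partial U_{y^1,\lambda_1}}{\partial y^1_i}\,U_{y^2,\lambda_2}\,dx
\end{equation*}
as the two main terms. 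The residual cross-cancellations of the form $\alpha_1(\alpha_2 - \alpha_2^{2^*_\gamma-1})\int U_{y^2,\lambda_2}^{2^*_\gamma-1}\partial_{y^1_i}U_{y^1,\lambda_1}\,dx$ generate the $O(\varepsilon\lambda_1\varepsilon_{12})$ and $O(\lambda_1\varepsilon_{12}^{1+\tau_1})$ errors listed.

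For the $K$-integral, the identity $\partial_{y^1_i}U_{y^1,\lambda_1} = -\partial_{x_i}U_{y^1,\lambda_1}$ together with integration by parts converts it into $\frac{\varepsilon}{2^*_\gamma}\int \partial_{x_i}K\cdot U_{y^1,\lambda_1}^{2^*_\gamma}\,dx$. Plugging in the local expansion $K(x) = K(z^1) + \sum_j a_j^1|x_j - z^1_j|^{\beta_1} + O(|x-z^1|^{\beta_1+\sigma})$, rescaling $x = y^1 + \xi/\lambda_1$, and Taylor-expanding in the small parameter $y^1_i - z^1_i$ produces the main term $-D_{N,\beta_1}a_i^1\varepsilon \lambda_1^{2-\beta_1}(y^1_i - z^1_i)$: the constant $K(z^1)$ and the odd-in-$\xi$ leading piece of $|\cdot|^{\beta_1-1}\mathrm{sgn}(\cdot)$ both vanish by symmetry against the radial $U^{2^*_\gamma}$, so the expansion begins at linear order in $y^1_i - z^1_i$. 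The quadratic Taylor remainder supplies the $O(\varepsilon\lambda_k^{-(\beta_k-1)}\lambda_k^2|y^k-z^k|^2)$ contribution, while the $|x-z^1|^{\beta_1+\sigma}$ tail gives the $O(\lambda_k\varepsilon|y-z|^{\beta+\hat{\sigma}})$ term.

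For the cross-interaction, the identity $(2^*_\gamma-1)U_{y^1,\lambda_1}^{2^*_\gamma-2}\partial_{y^1_i}U_{y^1,\lambda_1} = \partial_{y^1_i}\bigl(U_{y^1,\lambda_1}^{2^*_\gamma-1}\bigr)$ combined with the Euler--Lagrange equation $(-\Delta)^\gamma U = U^{2^*_\gamma-1}$ converts the integral into $-\partial_{y^1_i}\langle U_{y^1,\lambda_1}, U_{y^2,\lambda_2}\rangle$. A standard expansion of the two-bubble inner product (the $y$-derivative companion of Lemma \ref{l:lemmaB10}) yields the leading term $-C_2(y^1_i - y^2_i)/(\lambda_1\lambda_2)^{(N-2\gamma)/2}$ plus the tail $O(\varepsilon_{12}^{(N-\gamma)/(N-2\gamma)})$. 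I expect the main obstacle to be establishing this tail estimate sharply: it requires splitting $\mathbb{R}^N$ into a neighborhood of $y^1$ where $U_{y^2,\lambda_2}$ can be Taylor-expanded around $y^1$, and its complement where only the far-field decay $U_{y^1,\lambda_1}(x)\sim C_0\lambda_1^{-(N-2\gamma)/2}|x-y^1|^{-(N-2\gamma)}$ is available, and balancing the resulting bounds to produce precisely the exponent $(N-\gamma)/(N-2\gamma)$. Once all pieces are assembled and $\mathcal{R}$ is controlled via Lemma \ref{l:lemma2.1}, collecting the leading terms and the various error contributions yields the stated formula.
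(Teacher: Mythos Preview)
Your proposal is correct and mirrors the paper's own argument: the paper performs exactly the same expansion of $\partial J_\varepsilon/\partial y^1_i$, reduces to the two integrals $-\varepsilon\int K\,U_{y^1,\lambda_1}^{2^*_\gamma-1}\partial_{y^1_i}U_{y^1,\lambda_1}$ and $-(2^*_\gamma-1)\int U_{y^1,\lambda_1}^{2^*_\gamma-2}\partial_{y^1_i}U_{y^1,\lambda_1}\,U_{y^2,\lambda_2}$, and then invokes Lemmas \ref{l:lemmaB9} and \ref{l:lemmaB11}. One small caveat: for the $K$-integral you integrate by parts to produce $\partial_{x_i}K$, but the hypotheses give only a pointwise expansion of $K$ near $z^1$, not of $\nabla K$, so the $O(|x-z^1|^{\beta_1+\sigma})$ tail should be handled by the direct computation of Lemma \ref{l:lemmaB9} (using the explicit form of $\partial_{y^k_i}U_{y^k,\lambda_k}$) rather than by differentiating it.
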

\begin{proof}
The proof is similar to Lemma \ref{l:lemma2.2}. Indeed,
\begin{eqnarray*}
&&\frac{\partial J_\varepsilon (\alpha, y, \lambda, v)}{\partial y^1_i}=  \alpha_1 \left\langle\sum_{j = 1}^2\alpha_j U_{y^j, \lambda_j} + v, \frac{\partial U_{y^j, \lambda_j}}{\partial y^1_i}\right\rangle\\ && \,\,\,\,\,\,- \alpha_1\displaystyle\int_{\mathbb{R}^n}(1 + \varepsilon K)\left|\sum_{j = 1}^2\alpha_{j} U_{y^j, \lambda_{j}} + v\right|^{2^*_\gamma - 2}\left(\sum_{j = 1}^2\alpha_{j} U_{y^j, \lambda_{j}} + v\right)\frac{\partial U_{y^j, \lambda_j}}{\partial y^1_i}dx\\
&& = \alpha_1\alpha_{2}\displaystyle\int_{\mathbb{R}^n}U_{y^2, \lambda_{2}}^{2^*_\gamma - 1}\frac{\partial U_{y^1, \lambda_1}}{\partial y^1_i}dx - \alpha_1\displaystyle\int_{\mathbb{R}^n}(1 + \varepsilon K)\left(\sum_{j = 1}^2\alpha_{j} U_{y^j, \lambda_{j}}\right)^{2^*_\gamma - 1}\frac{\partial U_{y^1, \lambda_1}}{\partial y^1_i}dx\\
&& \,\,\,\,\,\,-\alpha_1(2^*_\gamma - 1)\displaystyle\int_{\mathbb{R}^n}(1 + \varepsilon K)\left(\sum_{j = 1}^2\alpha_{j} U_{y^j, \lambda_{j}}\right)^{2^*_\gamma - 2}\frac{\partial U_{y^1, \lambda_1}}{\partial y^1_i}vdx + O(\lambda_1)\|v\|^2_{\dot{H}^\gamma(\mathbb{R}^N)}\\
&& = \alpha_1(\alpha_2 - \alpha_2^{2^*_\gamma - 1})\displaystyle\int_{\mathbb{R}^n}U_{y^2, \lambda_{2}}^{2^*_\gamma - 1}\frac{\partial U_{y^1, \lambda_1}}{\partial y^1_i}dx - \alpha_1^{2^*_\gamma}\varepsilon\displaystyle\int_{\mathbb{R}^n}K\alpha_{j} U_{y^1, \lambda_{1}}^{2^*_\gamma - 1}\frac{\partial U_{y^1, \lambda_1}}{\partial y^1_i}dx\\
&&\,\,\,\,\,\, -(2^*_\gamma - 1)\alpha_1^{2^*_\gamma - 1}\alpha_2\displaystyle\int_{\mathbb{R}^n}\alpha_{j} U_{y^1, \lambda_{1}}^{2^*_\gamma - 2}\frac{\partial U_{y^1, \lambda_1}}{\partial y^1_i}U_{y^2, \lambda_2}dx + O\left(\varepsilon\lambda_1\varepsilon_{12}\right)\\ &&\,\,\,\,\,\,+\,\,
O\left(\lambda_1\varepsilon_{12}^{1 + \tau_1}\right)+ O\left(\lambda_1\varepsilon\sum_{j = 1}^2\left(\frac{1}{\lambda_j^{\beta_j + \sigma}} +  |y^j - z^j|^{\beta_j + \sigma}\right)\right)\\
&& =  -\varepsilon\displaystyle\int_{\mathbb{R}^n}K U_{y^1, \lambda_{1}}^{2^*_\gamma - 1}\frac{\partial U_{y^1, \lambda_1}}{\partial y^1_i}dx -(2^*_\gamma - 1)\displaystyle\int_{\mathbb{R}^n} U_{y^1, \lambda_{1}}^{2^*_\gamma - 2}\frac{\partial U_{y^1, \lambda_1}}{\partial y^1_i}U_{y^2, \lambda_2}dx\\ && \,\,\,\,\,\,+\,\, O\left(\varepsilon\lambda_1\varepsilon_{12}\right)+
O\left(\lambda_1\varepsilon_{12}^{1 + \tau_1}\right)+ O\left(\lambda_1\varepsilon\sum_{j = 1}^2\left(\frac{1}{\lambda_j^{\beta_j + \sigma}} + |y^j - z^j|^{\beta_j + \sigma}\right)\right).
\end{eqnarray*}
By Lemmas \ref{l:lemmaB9} and \ref{l:lemmaB11}, we have the conclusion. This completes the proof.
\end{proof}
\begin{lemma}\label{l:lemma2.4}
For $(y, \lambda)\in D_\mu$, let $(\alpha, v)\in\mathbb{R}^2\times E^2_{y, \lambda}$ be obtained in Lemma \ref{l:lemma2.1}. Then the following estimates hold
\begin{equation*}
B_k = O\left(\lambda_k\varepsilon_{12}\right) + O\left(\lambda_k^2\sum_{j = 1}^2\left(\frac{\varepsilon}{\lambda_j^{\beta_j +1}} + \varepsilon |y^j - z^j|^{\beta_j + 1}\right)\right),
\end{equation*}
\begin{equation*}
C_{ki}= O\left(\frac{1}{\lambda_k^2}\left(\frac{\varepsilon}{\lambda_k^{\beta_k - 1}}\lambda_k|y^k - z^k| + \varepsilon_{12}\right)\right)
+ O\left(\frac{1}{\lambda_k}\sum_{j = 1}^2\left(\frac{\varepsilon}{\lambda_j^{\beta_j - 1 +\sigma}} + \varepsilon\lambda_j|y^j - z^j|^{\beta_j + \sigma}\right)\right),
\end{equation*}
here, $\sigma > 0$ is some constant.
\end{lemma}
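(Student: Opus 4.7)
The plan is to read off $B_k$ and $C_{ki}$ directly from the decomposition (\ref{e:lagrangian2}), rather than inverting the second-derivative system (\ref{e:lagrangian3})--(\ref{e:lagrangian4}): by (\ref{e:lagrangian2}) these multipliers are precisely the coefficients of $\partial_{\lambda_k}U_{y^k,\lambda_k}$ and $\partial_{y^k_i}U_{y^k,\lambda_k}$ in the expansion of $\partial J_\varepsilon/\partial v$ along the approximate tangent space spanned by $\{U_{y^j,\lambda_j},\partial_{\lambda_j}U_{y^j,\lambda_j},\partial_{y^j_i}U_{y^j,\lambda_j}\}$. I will extract them by pairing (\ref{e:lagrangian2}) in $\dot H^\gamma$ with those two basis elements respectively.

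The key simplifications are: the scale-invariance identity $\langle U_{y^k,\lambda_k},\partial_{\lambda_k}U_{y^k,\lambda_k}\rangle_{\dot H^\gamma}=\tfrac12\partial_{\lambda_k}\|U_{y^k,\lambda_k}\|_{\dot H^\gamma}^2=0$; the parity identities $\langle\partial_{\lambda_k}U_{y^k,\lambda_k},\partial_{y^k_i}U_{y^k,\lambda_k}\rangle_{\dot H^\gamma}=0$ and $\langle\partial_{y^k_l}U_{y^k,\lambda_k},\partial_{y^k_i}U_{y^k,\lambda_k}\rangle_{\dot H^\gamma}=\delta_{li}\|\partial_{y^k_i}U_{y^k,\lambda_k}\|_{\dot H^\gamma}^2$; and the chain-rule identity $\partial J_\varepsilon/\partial\lambda_k=\alpha_k\langle\partial J_\varepsilon/\partial v,\partial_{\lambda_k}U_{y^k,\lambda_k}\rangle_{\dot H^\gamma}$ (with its $y^k_i$-analogue). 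Together these collapse the same-bubble contribution so that pairing (\ref{e:lagrangian2}) with $\partial_{\lambda_k}U_{y^k,\lambda_k}$ yields a scalar equation $\alpha_k^{-1}\,\partial J_\varepsilon/\partial\lambda_k=B_k\|\partial_{\lambda_k}U_{y^k,\lambda_k}\|_{\dot H^\gamma}^2+\mathcal R^{\lambda}_k$, and pairing with $\partial_{y^k_i}U_{y^k,\lambda_k}$ yields the analogous equation for $C_{ki}$; the remainders $\mathcal R^{\lambda}_k$, $\mathcal R^{y}_{ki}$ involve only the other-bubble multipliers $A_{\bar k},B_{\bar k},C_{\bar k,l}$ paired with bubble-interaction inner products that each carry a positive power of $\varepsilon_{12}$ by the estimates of Appendix B.

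Since $\|\partial_{\lambda_k}U_{y^k,\lambda_k}\|_{\dot H^\gamma}^2=c\lambda_k^{-2}$ and $\|\partial_{y^k_i}U_{y^k,\lambda_k}\|_{\dot H^\gamma}^2=c'\lambda_k^2$ for universal constants $c,c'>0$, the scalar equations solve to $B_k=c^{-1}\lambda_k^2\alpha_k^{-1}\,\partial J_\varepsilon/\partial\lambda_k+O(\lambda_k^2|\mathcal R^{\lambda}_k|)$ and $C_{ki}=(c')^{-1}\lambda_k^{-2}\alpha_k^{-1}\,\partial J_\varepsilon/\partial y^k_i+O(\lambda_k^{-2}|\mathcal R^{y}_{ki}|)$. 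Substituting the expansions of $\partial J_\varepsilon/\partial\lambda_k$ from Lemma \ref{l:lemma2.2} and of $\partial J_\varepsilon/\partial y^k_i$ from Lemma \ref{l:lemma2.3}, and coupling $k=1,2$ into a $(2+2+2N)$-dimensional linear system whose off-diagonal blocks are $O(\varepsilon_{12})$, a Neumann-series argument resolves the cross-bubble coupling and produces the claimed bounds after multiplication by $\lambda_k^{\pm 2}$ and collection of error orders. The main technical obstacle is the bookkeeping: one must verify that after this multiplication every cross-bubble correction and every error term coming from Lemmas \ref{l:lemma2.2}--\ref{l:lemma2.3} fits into one of the $O(\cdot)$-terms in the statement, which is routine given the smallness of $\varepsilon_{12}$ and of $\|v\|_{\dot H^\gamma}$ from Lemma \ref{l:lemma2.1} but requires assembling a number of individually small contributions.
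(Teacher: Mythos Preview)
Your approach is essentially the same as the paper's. The paper also pairs (\ref{e:lagrangian2}) against the test functions $U_{y^k,\lambda_k}$, $\partial_{\lambda_k}U_{y^k,\lambda_k}$, $\partial_{y^k_h}U_{y^k,\lambda_k}$, invokes the identity $\langle\partial J_\varepsilon/\partial v,\partial_{\lambda_k}U_{y^k,\lambda_k}\rangle=\partial J_\varepsilon/\partial\lambda_k$ (your version with the factor $\alpha_k$ is in fact the precise one, but $\alpha_k=1+o(1)$), observes that the resulting $(2+2+2N)\times(2+2+2N)$ coefficient matrix is ``quasi diagonal'' (exactly your Neumann-series remark), and then inserts Lemmas \ref{l:lemma2.2}--\ref{l:lemma2.3} together with the Appendix~C interaction estimates to read off the claimed bounds.
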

\begin{proof}
For $\varphi\in \dot{H}^\gamma(\mathbb{R}^N)$, we have
\begin{equation*}
\langle\frac{\partial J_\varepsilon}{\partial v}, \varphi\rangle = \sum_{j = 1}^2A_j \langle U_{y^j, \lambda_j}, \varphi\rangle + \sum_{j = 1}^2B_j \langle\frac{\partial U_{y^j, \lambda_j}}{\partial \lambda_j}, \varphi\rangle + \sum_{j = 1}^2\sum_{i = 1}^NC_{ij}\langle\frac{\partial U_{y^j, \lambda_j}}{\partial y^j_i}, \varphi\rangle.
\end{equation*}
Choosing $\varphi = U_{y^k, \lambda_k}$, $\frac{\partial U_{y^k, \lambda_k}}{\partial \lambda_k}$, $\frac{\partial U_{y^k, \lambda_k}}{\partial y^k_h}$, $k = 1, 2$, $h = 1,\cdot\cdot\cdot, N$ respectively and using the fact that
$$
\langle\frac{\partial J_\varepsilon}{\partial v}, \frac{\partial U_{y^k, \lambda_k}}{\partial \lambda_k}\rangle =   \frac{\partial J_\varepsilon}{\partial \lambda_k}, \quad\langle\frac{\partial J_\varepsilon}{\partial v}, \frac{\partial U_{y^k, \lambda_k}}{\partial y^k_h}\rangle =   \frac{\partial J_\varepsilon}{\partial y^k_h},
$$
we obtain a system of equations of $A_j$, $B_j$, $C_{ji}$. Moreover, the coefficient matrix is quasi diagonal. Using the estimate in Lemmas \ref{l:lemma2.2}, \ref{l:lemma2.3} and in the estimates in appendix C, we obtain the desired estimates.
\end{proof}

\section{Proof of the main result.}
Inspired by \cite{CaoNoussairYan2002}, we set $L_\varepsilon = \varepsilon^{-\frac{\beta_1\beta_2}{\frac{N - 2\gamma}{2}(\beta_1 + \beta_2) - \beta_1\beta_2}}$. By Lemma \ref{l:lemma2.1}, we only need to show that equations (\ref{e:lagrangian3}) and (\ref{e:lagrangian4}) are satisfied by some $(y, \lambda)\in D_\mu$.

By Lemmas \ref{l:lemma2.2}-\ref{l:lemma2.4}, (\ref{e:lagrangian3}) and (\ref{e:lagrangian4}) are equivalent to the following system,
\begin{equation}\label{e:lagrangian5}
\frac{\varepsilon}{\lambda_k^{\beta_k}}\lambda_k(y^k_i - z^k_i) = O\left(\varepsilon\displaystyle\sum_{j = 1}^2\left(\frac{1}{\lambda_j^{\beta_j + \sigma}} + |y^j - z^j|^{\beta_j + \sigma}\right)\right) + O(\frac{\varepsilon_{12}}{\lambda_k}), \quad k = 1,2,\quad i = 1,\cdot\cdot\cdot, N,
\end{equation}
\begin{equation}\label{e:lagrangian6}
\displaystyle\sum_{i = 1}^N a_i^k \frac{\varepsilon}{\lambda_k^{\beta_k}} + \frac{d_k}{(\lambda_1\lambda_2)^{\frac{N - 2\gamma}{2}}} = O(\varepsilon\varepsilon_{12}) + O(\varepsilon_{12}^{1 + \tau_1}) + O\left(\varepsilon\displaystyle\sum_{j = 1}^2\left(\frac{1}{\lambda_j^{\beta_j + \sigma}} + |y^j - z^j|^{\beta_j + \sigma}\right)\right),
\end{equation}
where $d_k > 0$, $\tau_1 > 0$ are constants.

Let
$$
\lambda_1 = t_1 L_\varepsilon^{\beta_1^{-1}}, \quad \lambda_2 = t_2 L_\varepsilon^{\beta_2^{-1}}, \quad t_1, t_2\in [\gamma_1, \gamma_2],
$$
$$
y^1 - z^1 = \lambda_1^{-1}x^1, \quad y^2 - z^2 = \lambda_2^{-1} x^2, \quad x^1, x^2\in B_{\delta}(0).
$$
Then (\ref{e:lagrangian5}) and (\ref{e:lagrangian6}) are equivalent to
\begin{equation}\label{e:lagrangian7}
x^k = o_{\varepsilon}(1)\quad  k = 1, 2,
\end{equation}
\begin{equation}\label{e:lagrangian8}
t_k^{-\beta_k} + \left(\displaystyle\sum_{i = 1}^N a_i^k\right)^{-1}d_k(t_1t_2)^{-\frac{N - 2\gamma}{2}} = o_{\varepsilon}(1), \quad k = 1, 2.
\end{equation}

Let
$$
f(x^1, x^2) = (x^1, x^2), \quad (x^1, x^2)\in \Omega_1:= B_{\delta}(0)\times B_{\delta}(0),
$$
$$
g(t_1, t_2) = (g_1(t_1, t_2), g_2(t_1, t_2)),\quad (t_1, t_2)\in \Omega_2:= [\gamma_1, \gamma_2]\times [\gamma_1, \gamma_2],
$$
$$
g_k(t_1, t_2) = \frac{1}{t_k^{\beta_k}} - \frac{m_k}{(t_1t_2)^{\frac{N - 2\gamma}{2}}}, \quad\text{ where } m_k = - \frac{d_k}{\sum_{i = 1}^N a_i^k} > 0.
$$
Then
\begin{equation}\label{e:proof1}
deg(f, \Omega_1, 0) = 1.
\end{equation}

It is easy to see that $g = 0$ has a unique solution $(t_1^*, t_2^*)$ in $[\gamma_1, \gamma_2]\times [\gamma_1, \gamma_2]$ when $\gamma_1$ is small and $\gamma_2 > 0$ is large enough. Moreover, it holds that,
$$
\frac{\partial g_1}{\partial t_1}|_{(t_1^*, t_2^*)} = \frac{1}{t_1^*}\left(-\beta_1 + \frac{N - 2\gamma}{2}\right)\frac{m_1}{(t_1^*t_2^*)^{\frac{N - 2\gamma}{2}}},
$$
$$
\frac{\partial g_1}{\partial t_2}|_{(t_1^*, t_2^*)} = \frac{1}{t_2^*}\frac{(N - 2\gamma)m_1}{2(t_1^*t_2^*)^{\frac{N - 2\gamma}{2}}},
$$
$$
\frac{\partial g_2}{\partial t_1}|_{(t_1^*, t_2^*)} = \frac{1}{t_1^*}\frac{(N - 2\gamma)m_2}{2(t_1^*t_2^*)^{\frac{N - 2\gamma}{2}}},
$$
$$
\frac{\partial g_1}{\partial t_2}|_{(t_1^*, t_2^*)} = \frac{1}{t_2^*} \left(-\beta_2 + \frac{N - 2\gamma}{2}\right)\frac{m_2}{(t_1^*t_2^*)^{\frac{N - 2\gamma}{2}}}.
$$
Thus the Jacobian determinant of $g$ at $(t_1^*, t_2^*)$ satisfies
$$
J_g|_{(t_1^*, t_2^*)} = (\beta_1\beta_2 - (\beta_ 1 + \beta_ 2)\frac{N - 2\gamma}{2})\frac{m_1m_2}{(t_1^*t_2^*)^{N - \gamma}} < 0.
$$
So
\begin{equation}\label{e:proof2}
deg (g, \Omega_2, 0) = -1.
\end{equation}

From (\ref{e:proof1}) and (\ref{e:proof2}), we have
$$
deg ((f\times g), \Omega_1\times \Omega_2, 0) = deg (f, \Omega_1, 0)\times deg (g, \Omega_2, 0) = -1.
$$
Hence there exists a solution for (\ref{e:lagrangian7}) and (\ref{e:lagrangian8}).
This completes the proof.
\begin{appendix}
\section{}
In this appendix, we prove that, for $\mu > 0$ sufficiently small, $(\alpha, y, \lambda, v)\in M_\mu$ is a critical point of $J_\varepsilon$ in $M_\varepsilon$ if and only if $u = \sum_{j = 1}^2\alpha_j U_{y^j, \lambda_j} + v$ is a critical point of $I_\varepsilon$ in $\dot{H}^\gamma(\mathbb{R}^N)$ and the fact that  solutions of form $u = \sum_{j = 1}^2\alpha_j U_{y^j, \lambda_j} + v$ for (\ref{e:main}) are positive.

For $x\in \mathbb{R}^N$ and $\lambda > 0$, define
\begin{equation*}
\delta(y, \lambda)(x) = U_{y, \lambda}(x) = C_0\left(\frac{\lambda}{1 + \lambda^2|x - y|^2}\right)^{\frac{N - 2\gamma}{2}}.
\end{equation*}
Following the idea of \cite{BahriCoron1988}, first, we prove
\begin{lemma}\label{l:A1}
Let $(\mu_k)$ be a sequence of positive numbers, $\lim_{k\to +\infty}\mu_k = 0$ and let $(\alpha^k, y_k, \lambda^k)$, $(\tilde{\alpha}^k, \tilde{y}_k, \tilde{\lambda}^k)\in B_{\mu_k} :=\{(\alpha, y, \lambda)|\alpha = (\alpha_1, \alpha_2)\in\mathbb{R}^+\times\mathbb{R}^+, (y, \lambda)\in D_{\mu_k}, |\alpha_1 - 1|\leq \mu_k, |\alpha_2 - 1|\leq \mu_k\}$ be two sequences such that
\begin{equation}\label{e:A1}
\lim_{k\to +\infty}\|\sum_{j = 1}^2\alpha^k_j U_{y_k^j, \lambda^k_j} - \sum_{j = 1}^2\tilde{\alpha}^k_j U_{\tilde{y}_k^j, \tilde{\lambda}^k_j}\|_{\dot{H}^\gamma(\mathbb{R}^N)} = 0.
\end{equation}
Then it holds that
\begin{equation}\label{e:A5}
\lim_{k\to +\infty}(\lambda^k_j/ \tilde{\lambda}^k_j) = 1,\quad \text{for all } j = 1, 2,
\end{equation}
\begin{equation}\label{e:A6}
\lim_{k\to +\infty}\lambda^k_j\tilde{\lambda}^k_j|y_k^j - \tilde{y}_k^j|^2 = 0,\quad \text{for all } j = 1, 2,
\end{equation}
\begin{equation}\label{e:A4}
\lim_{k\to +\infty}|\alpha^k_j - \tilde{\alpha}^k_j| = 0,\quad \text{for all } j = 1, 2.
\end{equation}
\end{lemma}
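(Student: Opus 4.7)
The plan is to first note that (\ref{e:A4}) comes for free from the definition of $B_{\mu_k}$: the constraints $|\alpha_j^k-1|\le\mu_k$ and $|\tilde\alpha_j^k-1|\le\mu_k$ both force $\alpha_j^k,\tilde\alpha_j^k\to 1$ as $k\to\infty$, so the difference tends to zero automatically. The substantive content of the lemma is therefore (\ref{e:A5}) and (\ref{e:A6}), the quantitative rigidity of the parameters $(y^j,\lambda_j)$ that label a two-bubble configuration. The overall strategy is: (i) decouple the two peaks by using that bubbles concentrating at distinct points of $\mathbb{R}^N$ become asymptotically orthogonal in $\dot{H}^\gamma(\mathbb{R}^N)$, (ii) reduce each single-peak difference to a standard bubble by conformal rescaling, and (iii) read off $(\hat y_k,\hat\lambda_k)\to(0,1)$ from a compactness argument on the one-bubble manifold.

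For the decoupling step I would expand
\[
\Bigl\|\sum_{j=1}^{2}\alpha_j^k U_{y_k^j,\lambda_j^k}-\sum_{j=1}^{2}\tilde\alpha_j^k U_{\tilde y_k^j,\tilde\lambda_j^k}\Bigr\|_{\dot{H}^\gamma(\mathbb{R}^N)}^{2}
\]
and collect the $2\times 4$ cross terms between index-$1$ and index-$2$ bubbles. Since $y_k^1,\tilde y_k^1\to z^1$ and $y_k^2,\tilde y_k^2\to z^2$ with $z^1\neq z^2$ while all four scales tend to $+\infty$, each such inner product is controlled by $\varepsilon_{12}^{k}=(\lambda_1^k\lambda_2^k)^{-(N-2\gamma)/2}$ (or its analogues built from $\tilde\lambda_j^k$) and is $o(1)$. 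Combined with the conformal invariance of $\|U_{y,\lambda}\|_{\dot{H}^\gamma(\mathbb{R}^N)}$, the assumption (\ref{e:A1}) collapses into
\[
\sum_{j=1}^{2}\bigl\|\alpha_j^k U_{y_k^j,\lambda_j^k}-\tilde\alpha_j^k U_{\tilde y_k^j,\tilde\lambda_j^k}\bigr\|_{\dot{H}^\gamma(\mathbb{R}^N)}^{2}=o(1),
\]
so that it suffices to treat each index $j$ separately.

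For a fixed $j$ I would exploit the scaling-translation symmetry of $\dot{H}^\gamma(\mathbb{R}^N)$: the map $T_{y,\lambda}u(z)=\lambda^{-(N-2\gamma)/2}u(y+z/\lambda)$ is an isometry of $\dot{H}^\gamma(\mathbb{R}^N)$ sending $U_{y,\lambda}$ to $U_{0,1}$. Applying $T_{y_k^j,\lambda_j^k}$ to the $j$-th difference and using $\alpha_j^k,\tilde\alpha_j^k\to 1$, the problem reduces to showing that
\[
U_{0,1}-U_{\hat y_k,\hat\lambda_k}\to 0\quad\text{strongly in }\dot{H}^\gamma(\mathbb{R}^N)
\]
forces $\hat y_k\to 0$ and $\hat\lambda_k\to 1$, where $\hat y_k=\lambda_j^k(\tilde y_k^j-y_k^j)$ and $\hat\lambda_k=\tilde\lambda_j^k/\lambda_j^k$. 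Translating these two conclusions back to the original parameters yields exactly (\ref{e:A5}) and (\ref{e:A6}).

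The last step is the technical heart, though classical. Passing to a subsequence of $(\hat y_k,\hat\lambda_k)\in\mathbb{R}^N\times(0,\infty)$, I would argue by cases. If the parameters stay in a compact subset of $\mathbb{R}^N\times(0,\infty)$, continuity of $(y,\lambda)\mapsto U_{y,\lambda}$ in $\dot{H}^\gamma(\mathbb{R}^N)$ combined with the identity $U_{y^*,\lambda^*}=U_{0,1}$ pins down $y^*=0$, $\lambda^*=1$ by uniqueness of the standard-bubble parameters. The degenerate alternatives---$\hat\lambda_k\to 0$, $\hat\lambda_k\to\infty$, or $|\hat y_k|\to\infty$---are ruled out by observing that in each case $U_{\hat y_k,\hat\lambda_k}$ converges weakly to zero in $\dot{H}^\gamma(\mathbb{R}^N)$ (either the bubble spreads out, concentrates, or escapes to infinity), while its norm is the fixed nonzero constant $\|U_{0,1}\|_{\dot{H}^\gamma(\mathbb{R}^N)}$; this is incompatible with strong convergence to a nonzero element. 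Excluding these concentration-compactness alternatives is the step where I anticipate needing to be most careful, but once it is in place the entire argument is a clean reduction to the orthogonality of standard bubbles.
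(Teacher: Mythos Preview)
Your proposal is correct and follows essentially the same route as the paper: decouple the two peaks via the asymptotic orthogonality of bubbles concentrating at distinct points, rescale each single-peak difference by the conformal isometry to a comparison with $U_{0,1}$, and conclude $(\hat y_k,\hat\lambda_k)\to(0,1)$. Your observation that (\ref{e:A4}) is immediate from the membership constraint $|\alpha_j^k-1|,|\tilde\alpha_j^k-1|\le\mu_k\to 0$ is in fact cleaner than the paper, which extracts (\ref{e:A4}) only after establishing (\ref{e:A3}); and your concentration-compactness case analysis for the final step is more explicit than the paper's sketch, which simply asserts (\ref{e:A5}) and (\ref{e:A6}) once the rescaled difference is shown to be $o(1)$.
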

\begin{proof}
First, it holds that
\begin{equation*}
\|\delta(y, \lambda)\|_{\dot{H}^\gamma(\mathbb{R}^N)} = 1
\end{equation*}
and
\begin{equation}\label{e:A2}
\displaystyle\lim_{\lambda/\lambda' + \lambda'/\lambda + \lambda\lambda'|y - y'|^2 \to +\infty}\int_{\mathbb{R}^N}(-\Delta)^{\frac{\gamma}{2}}\delta(y, \lambda)(-\Delta)^{\frac{\gamma}{2}}\delta(y', \lambda')dx = 0.
\end{equation}
Then by (\ref{e:A1}) and (\ref{e:A2}), we have
\begin{equation}\label{e:A3}
\displaystyle\int_{\mathbb{R}^N}|\alpha^k_i(-\Delta)^{\frac{\gamma}{2}}\delta(y^i_k, \lambda^k_i) - \tilde{\alpha}^k_i(-\Delta)^{\frac{\gamma}{2}}\delta(\tilde{y}^i_k, \tilde{\lambda}^k_i)|^2dx = o(1)\quad\text{ for } i = 1, 2.
\end{equation}
Hence
\begin{equation*}
\lambda^k_i/\tilde{\lambda}^k_i + \tilde{\lambda}^k_i/\lambda^k_i + \lambda^k_i\tilde{\lambda}^k_i|y^k_i - \tilde{y}^k_i|^2\leq c.
\end{equation*}
Set
\begin{equation*}
w = C_0\left(\frac{1}{1 + |x|^2}\right)^{\frac{N - 2\gamma}{2}},
\end{equation*}
then from (\ref{e:A3}), (\ref{e:A4}) holds. Furthermore,
\begin{eqnarray*}
&&\displaystyle\int_{\mathbb{R}^N}|(-\Delta)^{\frac{\gamma}{2}}\omega - (-\Delta)^{\frac{\gamma}{2}}\delta(\lambda_i(y^i_k - \tilde{y}^i_k), \tilde{\lambda}^k_i/\lambda^k_i)|^2dx\\ && =  \int_{\mathbb{R}^N}|(-\Delta)^{\frac{\gamma}{2}}\delta(y^i_k, \lambda^k_i) - (-\Delta)^{\frac{\gamma}{2}}\delta(\tilde{y}^i_k, \tilde{\lambda}^k_i)|^2dx\\ &&= o(1).
\end{eqnarray*}
Thus we have (\ref{e:A5}) and (\ref{e:A6}).
\end{proof}
\begin{lemma}\label{l:existence}
There exists a constant $\mu_0 > 0$ such that when $\mu\in (0, \mu_0]$ and for each $u\in \dot{H}^\gamma(\mathbb{R}^N)$ satisfying
\begin{equation*}
\|u - \sum_{j = 1}^2 U_{\bar{y}^j, \bar{\lambda}_j}\|_{\dot{H}^\gamma(\mathbb{R}^N)}\leq \mu, \quad\text{for some } (\bar{y}, \bar{\lambda})\in D_\mu,
\end{equation*}
the infimum problem
$$
\displaystyle\inf_{(\alpha, y, \lambda)\in B_{4\mu}}\|u - \sum_{j = 1}^2\alpha_j U_{y^j, \lambda_j}\|_{\dot{H}^\gamma(\mathbb{R}^N)}
$$
is achieved in $B_{2\mu}$ and is not achieved in $B_{4\mu}\setminus B_{2\mu}$.
\end{lemma}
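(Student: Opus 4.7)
The plan is to first establish existence of a minimizer in the closed set $B_{4\mu}$, and then to show that no minimizer can lie in the annular region $B_{4\mu}\setminus B_{2\mu}$, thereby forcing the minimizer into $B_{2\mu}$.

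For existence, set $F(\alpha,y,\lambda)=\|u-\sum_{j=1}^2\alpha_j U_{y^j,\lambda_j}\|_{\dot H^\gamma(\mathbb{R}^N)}$; the hypothesis gives $F(1,1,\bar y,\bar\lambda)\leq\mu$, hence $\inf_{B_{4\mu}} F\leq\mu$. Along a minimizing sequence $(\alpha^n,y^n,\lambda^n)\in B_{4\mu}$, the $\alpha$- and $y$-components lie in compact sets and subsequentially converge. The sole obstruction to full compactness is divergence of some $\lambda^n_j$; however, if $\lambda^n_j\to\infty$ then $U_{y^n_j,\lambda^n_j}\rightharpoonup 0$ weakly in $\dot H^\gamma$ while retaining unit norm, and expanding the square of the norm yields
\[
\liminf_n F(\alpha^n,y^n,\lambda^n)^2 \geq \|u\|^2 + \sum_j (\alpha^*_j)^2 - 2\sum_j\alpha^*_j\langle u,U^\infty_j\rangle,
\]
where $U^\infty_j$ is the weak limit of $U_{y^n_j,\lambda^n_j}$ (equal to $0$ for any diverging component). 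Since the hypothesis forces $\|u\|\approx\sqrt{2}$ and $\alpha^*_j\approx 1$ for small $\mu_0$, the right-hand side is uniformly bounded below by a positive constant, in particular much larger than $\mu$, contradicting the minimizing property. Thus $\lambda^n$ stays bounded, converges along a subsequence, and continuity of $F$ produces a minimizer $(\alpha^*,y^*,\lambda^*)\in B_{4\mu}$.

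To exclude minimizers in $B_{4\mu}\setminus B_{2\mu}$, suppose such a minimizer exists. Since the achieved value is $\leq\mu$, the triangle inequality with the hypothesis yields
\[
\Big\|\sum_{j=1}^2\alpha^*_j U_{y^{*j},\lambda^*_j}-\sum_{j=1}^2 U_{\bar y^j,\bar\lambda_j}\Big\|_{\dot H^\gamma}\leq 2\mu.
\]
I would now invoke a quantitative refinement of Lemma \ref{l:A1}. The smooth parametrization $\Phi(\alpha,y,\lambda)=\sum_j\alpha_j U_{y^j,\lambda_j}$ has a non-degenerate differential at $(1,\bar y,\bar\lambda)$: after the natural rescalings $\delta y^j\mapsto\bar\lambda_j\delta y^j$, $\delta\lambda_j\mapsto\bar\lambda_j^{-1}\delta\lambda_j$, the tangent vectors $U_{\bar y^j,\bar\lambda_j}$, $\bar\lambda_j\partial_{\lambda_j} U$, $\bar\lambda_j^{-1}\partial_{y^j_i} U$ form a system whose Gram matrix equals $\mathrm{Id}+o(1)$ as $\mu_0\to 0$, since the cross-peak inner products are of order $\varepsilon_{12}=(\bar\lambda_1\bar\lambda_2)^{-(N-2\gamma)/2}=O(\mu^{N-2\gamma})$. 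Inverting gives
\[
|\alpha^*_j-1|,\;\bar\lambda_j|y^{*j}-\bar y^j|,\;\bar\lambda_j^{-1}|\lambda^*_j-\bar\lambda_j|\leq (1+o(1))\cdot 2\mu.
\]
Using $\bar\lambda_j>1/\mu$ and $|\bar y^j-z^j|\leq\mu$, the middle and right bounds immediately give $|y^{*j}-z^j|\leq C\mu^2+\mu<2\mu$ and $\lambda^*_j>1/(2\mu)$ for $\mu_0$ small. To sharpen the $\alpha$-bound from $(1+o(1))\cdot 2\mu$ down to $\leq 2\mu$, I would exploit the Lagrange conditions at the (interior) minimum: they give $(\langle U^*_j,U^*_k\rangle)_{jk}\alpha^*=(\langle u,U^*_k\rangle)_k$, a linear system with near-identity matrix and right-hand side $1+O(\mu)$, yielding $|\alpha^*_j-1|=O(\mu)$ with a constant that becomes $\leq 2$ as $\mu_0\to 0$. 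Hence $(\alpha^*,y^*,\lambda^*)\in B_{2\mu}$, the desired contradiction.

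The main obstacle is the quantitative strengthening of Lemma \ref{l:A1}, with constants uniform in $(\bar y,\bar\lambda)\in D_\mu$ and tight enough to absorb the gap between $B_{4\mu}$ and $B_{2\mu}$ after shrinking $\mu_0$. The three different length scales ($\alpha$ at order $1$, $y^j$ at order $\bar\lambda_j^{-1}$, $\lambda_j$ at order $\bar\lambda_j$) are coupled through a single $\dot H^\gamma$-norm, and one must verify that the cross-peak interaction of order $\varepsilon_{12}$ is genuinely lower-order than the diagonal block of the Gram matrix. Combining this with the Lagrange multiplier conditions at the minimizer is what makes the final $\alpha$-estimate tight enough to close the argument.
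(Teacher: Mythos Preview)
Your argument is essentially correct but follows a different route from the paper.

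The paper argues purely by contradiction along a sequence: assuming no $\mu_0$ works, it extracts $\mu_k\to 0$, a corresponding $u_k$ with $\|u_k-\sum_j U_{\bar y^j_k,\bar\lambda^k_j}\|\leq\mu_k$, and a minimizer $(\tilde\alpha^k,\tilde y_k,\tilde\lambda^k)\in B_{4\mu_k}\setminus B_{2\mu_k}$. Since the achieved value is $\leq\mu_k$, the triangle inequality gives $\|\sum_j\tilde\alpha^k_jU_{\tilde y^j_k,\tilde\lambda^k_j}-\sum_j U_{\bar y^j_k,\bar\lambda^k_j}\|\to 0$, and the paper then applies Lemma~\ref{l:A1} \emph{as a black box} to conclude $\tilde\lambda^k_j/\bar\lambda^k_j\to 1$, $\bar\lambda^k_j\tilde\lambda^k_j|\tilde y^j_k-\bar y^j_k|^2\to 0$, $|\tilde\alpha^k_j-1|\to 0$. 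Combined with $(\bar y_k,\bar\lambda^k)\in D_{\mu_k}$ these force $(\tilde\alpha^k,\tilde y_k,\tilde\lambda^k)\in B_{2\mu_k}$ for large $k$, the contradiction. Existence in $B_{2\mu}$ is dismissed with ``similarly''. No constants are ever tracked.

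Your approach is quantitative at fixed $\mu$. You first establish existence of a minimizer in $B_{4\mu}$ by a compactness argument (which the paper omits), ruling out $\lambda^n_j\to\infty$ because then $F$ would have liminf bounded below by roughly $|\alpha^*_j|\approx 1\gg\mu$. For the exclusion step you replace the sequential use of Lemma~\ref{l:A1} by a quantitative local inversion: the rescaled tangent vectors of $\Phi(\alpha,y,\lambda)=\sum_j\alpha_jU_{y^j,\lambda_j}$ have Gram matrix $\mathrm{Id}+O(\varepsilon_{12})=\mathrm{Id}+O(\mu^{N-2\gamma})$, so the norm bound $\leq 2\mu$ transfers to parameter bounds; the $y$- and $\lambda$-conclusions then follow easily from $\bar\lambda_j>1/\mu$, and you close the marginal $\alpha$-bound with the first-order optimality condition $\sum_l\alpha^*_l\langle U^*_l,U^*_k\rangle=\langle u,U^*_k\rangle$, which indeed gives $|\alpha^*_k-1|\leq\mu(1+o(1))$ since the only first-order contribution on the right is $|\langle v,U^*_k\rangle|\leq\mu$.

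What each buys: the paper's sequential proof is shorter and simply reuses Lemma~\ref{l:A1}, at the cost of yielding no effective $\mu_0$. Your argument is longer and requires controlling the second-order remainder in the linearization of $\Phi$ (so that the Gram-matrix bound really gives parameter bounds with constant $1+o(1)$ and not merely $O(1)$), but it handles the existence part honestly and gives quantitative control throughout. Both strategies are valid; the paper's is the more economical one given that Lemma~\ref{l:A1} has already been proved.
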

\begin{proof}
Let us prove that the infimum cannot be achieved in $B_{4\mu_0}\setminus B_{2\mu_0}$ if $\mu_0$ is small enough. If this is not true, then there exist $\{\mu_k\}$ with $\mu_k > 0$ and $\mu_k = o(1)$, a sequence $(\tilde{\alpha}^k, \tilde{y}_k, \tilde{\lambda}^k)\in B_{4\mu_k}\setminus B_{2\mu_k}$ and $(y_k, \lambda^k)\in D_{\mu_k}$ such that
\begin{equation*}
\|u - \sum_{j = 1}^2 U_{y^j_k, \lambda_j^k}\|_{\dot{H}^\gamma(\mathbb{R}^N)}\leq \mu_k.
\end{equation*}
Then we have
\begin{equation*}
\|\sum_{j = 1}^2\tilde{\alpha}^k U_{\tilde{y}_k^j, \tilde{\lambda}^k_j} - \sum_{j = 1}^2U_{y^j_k, \lambda^k_j}\|_{\dot{H}^\gamma(\mathbb{R}^N)} = o(1).
\end{equation*}
By Lemma \ref{l:A1}, it holds that
\begin{equation}\label{e:A7}
\lambda^k_i/ \tilde{\lambda}^k_i = 1 + o(1),\quad \text{for all } i = 1, 2,
\end{equation}
\begin{equation}\label{e:A8}
\lambda^k_i\tilde{\lambda}^k_i|y_k^i - \tilde{y}_k^i|^2 = o(1),\quad \text{for all } i = 1, 2.
\end{equation}
(\ref{e:A7}), (\ref{e:A8}) and the fact that $(\tilde{\alpha}^k, \tilde{y}_k, \tilde{\lambda}^k)\in B_{4\mu_k}\setminus B_{2\mu_k}$ and $(y_k, \lambda^k)\in D_{\mu_k}$ is a contradiction. The other assertion can be proved similarly.
\end{proof}
\begin{lemma}\label{l:uniqueness}
There is a constant $\mu_0 > 0$ such that when $\mu\in (0, \mu_0]$ and for each $u\in \dot{H}^\gamma(\mathbb{R}^N)$ satisfying
\begin{equation*}
\|u - \sum_{j = 1}^2 U_{\bar{y}^j, \bar{\lambda}_j}\|_{\dot{H}^\gamma(\mathbb{R}^N)}\leq \mu, \quad\text{for some } (\bar{y}, \bar{\lambda})\in D_\mu,
\end{equation*}
the infimum problem
$$
\displaystyle\inf_{(\alpha, y, \lambda)\in B_{4\mu}}\|u - \sum_{j = 1}^2\alpha_j U_{y^j, \lambda_j}\|_{\dot{H}^\gamma(\mathbb{R}^N)}
$$
is uniquely achieved.
\end{lemma}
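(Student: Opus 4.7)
The plan is to argue by contradiction using Lemma \ref{l:existence} and Lemma \ref{l:A1} together with a uniform non-degeneracy of the Hessian of the squared-distance functional. Suppose the statement fails: then I can extract sequences $\mu_k \downarrow 0$ and functions $u_k\in\dot{H}^\gamma(\mathbb{R}^N)$ satisfying the hypothesis with $\mu=\mu_k$ for which the infimum is attained at two distinct points $p^{(\ell)}_k=(\alpha^{(\ell)}_k,y^{(\ell)}_k,\lambda^{(\ell)}_k)$, $\ell=1,2$. By Lemma \ref{l:existence} both $p^{(\ell)}_k$ belong to $B_{2\mu_k}$, so
\[
\Bigl\|\sum_{j=1}^2\alpha^{(1)}_{k,j}U_{y^{(1)j}_k,\lambda^{(1)}_{k,j}}-\sum_{j=1}^2\alpha^{(2)}_{k,j}U_{y^{(2)j}_k,\lambda^{(2)}_{k,j}}\Bigr\|_{\dot{H}^\gamma(\mathbb{R}^N)}\le 4\mu_k\to 0,
\]
and Lemma \ref{l:A1} yields $\alpha^{(1)}_{k,j}-\alpha^{(2)}_{k,j}\to 0$, $\lambda^{(1)}_{k,j}/\lambda^{(2)}_{k,j}\to 1$ and $\lambda^{(1)}_{k,j}\lambda^{(2)}_{k,j}|y^{(1)j}_k-y^{(2)j}_k|^2\to 0$ for $j=1,2$.

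Both points are interior critical points of
\[
\Phi_k(\alpha,y,\lambda):=\Bigl\|u_k-\sum_{j=1}^2\alpha_jU_{y^j,\lambda_j}\Bigr\|_{\dot{H}^\gamma(\mathbb{R}^N)}^{2}.
\]
Writing $F(\alpha,y,\lambda)=\sum_j\alpha_jU_{y^j,\lambda_j}$, one has $D^2\Phi_k[X,Y]=2\langle DF[X],DF[Y]\rangle+2\langle F-u_k,D^2F[X,Y]\rangle$. Working in the rescaled basis $\{\partial_{\alpha_j},\,\lambda_j^{-1}\partial_{\lambda_j},\,\lambda_j^{-1}\partial_{y^j_i}\}$, which normalizes the $\dot{H}^\gamma$-norms of the tangent vectors uniformly in $\lambda_j$, the Gram matrix $(\langle DF[e^A],DF[e^B]\rangle)$ is to leading order block-diagonal: the diagonal $j$-blocks tend to a fixed positive-definite matrix depending only on $N$ and $\gamma$, while the inter-bubble couplings are of size $O((\lambda^{(\ell)}_{k,1}\lambda^{(\ell)}_{k,2})^{-(N-2\gamma)/2})=o(1)$ on $D_{\mu_k}$, by the bubble interaction estimates that already underlie Lemmas \ref{l:lemmaC1} and \ref{l:lemmaC2}. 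The remaining contribution is bounded by a constant times $\|F-u_k\|_{\dot{H}^\gamma(\mathbb{R}^N)}=O(\mu_k)\to 0$. Hence $D^2\Phi_k$ is uniformly positive definite in the rescaled coordinates on all of $B_{4\mu_k}$ for $k$ large.

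Setting $G_k:=\nabla\Phi_k$ and using $G_k(p^{(1)}_k)=G_k(p^{(2)}_k)=0$, integration of $D^2\Phi_k$ along the straight segment joining the two minimizers gives
\[
0=\bigl\langle G_k(p^{(2)}_k)-G_k(p^{(1)}_k),\,p^{(2)}_k-p^{(1)}_k\bigr\rangle=\int_0^1\bigl\langle D^2\Phi_k\bigl(p^{(1)}_k+t(p^{(2)}_k-p^{(1)}_k)\bigr)(p^{(2)}_k-p^{(1)}_k),\,p^{(2)}_k-p^{(1)}_k\bigr\rangle\,dt,
\]
and uniform coercivity of $D^2\Phi_k$ then forces $p^{(1)}_k=p^{(2)}_k$ for $k$ large, contradicting our choice of the two points. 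The principal obstacle — and the only point requiring real work — is the uniform positive-definiteness of the rescaled Hessian throughout $B_{4\mu_k}$; this reduces to the non-degeneracy of the linearization at a single Aubin–Talenti bubble (standard for $(-\Delta)^\gamma$) combined with the decay of the bubble–bubble interactions on $D_{\mu_k}$, both already present in the estimates of Appendix B.
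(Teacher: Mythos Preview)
Your approach is valid and reaches the same conclusion, but it is genuinely different from the paper's. The paper works directly with the first-order optimality conditions: writing $v^k=u_k-\sum_j\alpha^k_jU_{y_k^j,\lambda^k_j}$ and $\tilde v^k$ analogously, both satisfy the orthogonality relations to $\delta_i$, $\partial_{\lambda_i}\delta_i$, $\partial_{y_i}\delta_i$. Subtracting these relations and Taylor-expanding in the normalized differences $\eta_i=\tilde\lambda_i/\lambda_i-1$, $a_i=\tilde\lambda_i(y_i-\tilde y_i)$, $\mu_i=\alpha_i-\tilde\alpha_i$ produces a homogeneous quasi-diagonal linear system with $o(1)$ coupling, whence $\eta_i=a_i=\mu_i=0$. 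Your route via strict convexity of $\Phi_k$ is more geometric and arguably more transparent, but it requires controlling second derivatives rather than first, so it trades directness for conceptual clarity; both ultimately rest on the same bubble non-degeneracy and interaction estimates.

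Two points should be tightened. First, the normalization of the $\lambda$-direction is inverted: since $\|\partial_{\lambda_j}U_{y^j,\lambda_j}\|_{\dot H^\gamma}\sim\lambda_j^{-1}$, the correct rescaled tangent vector is $\lambda_j\partial_{\lambda_j}$, not $\lambda_j^{-1}\partial_{\lambda_j}$. Second, the assertion that $\|F-u_k\|_{\dot H^\gamma}=O(\mu_k)$ holds throughout $B_{4\mu_k}$ is false, because $D_{4\mu_k}$ imposes no upper bound on $\lambda_j$; a bubble with $\lambda_j$ far from $\bar\lambda_{k,j}$ can be order one away from $u_k$. What you actually need, and can prove, is positive definiteness of $D^2\Phi_k$ \emph{along the segment} joining $p^{(1)}_k$ and $p^{(2)}_k$. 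At the endpoints $\|F(p^{(\ell)}_k)-u_k\|\le\mu_k$ since the infimum is at most $\mu_k$; for an interior point $p_t$, one has $\|F(p_t)-F(p^{(1)}_k)\|=o(1)$ because the rescaled distance between the two minimizers tends to zero by Lemma~\ref{l:A1}. With this localization (and a fixed rescaling, say by $\lambda^{(1)}_k$, which is legitimate since $\lambda^{(1)}_{k,j}/\lambda^{(2)}_{k,j}\to1$), the remainder of your argument is correct.
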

\begin{proof}
By Lemma \ref{l:existence}, we only need to prove the uniqueness part of the assertion. We argue by contradiction. If the statement is false, then there exist $\{\mu_k\}$ with $\mu_k = o(1)$, $\{u_k\}$ such that
\begin{equation*}
\|u_k - \sum_{j = 1}^2 U_{\bar{y}_k^j, \bar{\lambda}^k_j}\|_{\dot{H}^\gamma(\mathbb{R}^N)}\leq \mu_k, \quad\text{for some } (\bar{y}_k, \bar{\lambda}^k)\in D_{\mu_k},
\end{equation*}
$(\alpha^k, y_k, \lambda^k)$, $(\tilde{\alpha}^k, \tilde{y}_k, \tilde{\lambda}^k)$ in $B_{2\mu_k}$ for which the following properties hold:\\
first,
$$
(\alpha^k, y_k, \lambda^k) \neq (\tilde{\alpha}^k, \tilde{y}_k, \tilde{\lambda}^k);
$$
second, if $v^k = u^k - \sum_{j = 1}^2\alpha^k_j U_{y_k^j, \lambda^k_j}$ and $\tilde{v}^k = u^k - \sum_{j = 1}^2\tilde{\alpha}^k_j U_{{\tilde{y}}_k^j, \tilde{\lambda}^k_j}$, then it holds that
\begin{equation}\label{e:Ua1}
0 = \displaystyle\int_{\mathbb{R}^N}(- \Delta)^{\frac{\gamma}{2}}v^k(- \Delta)^{\frac{\gamma}{2}} \delta_{i}^kdx =  \displaystyle\int_{\mathbb{R}^N}(- \Delta)^{\frac{\gamma}{2}}v^k(- \Delta)^{\frac{\gamma}{2}} \frac{\partial\delta_{i}^k}{\partial \lambda_{i}^k}dx\quad \text{ for } i = 1, 2 \text{ and all } k,
\end{equation}
\begin{equation}\label{e:Ua2}
0 = \displaystyle\int_{\mathbb{R}^N}(- \Delta)^{\frac{\gamma}{2}}v^k(- \Delta)^{\frac{\gamma}{2}} \frac{\partial\delta_{i}^k}{\partial y_{k}^i}dx\quad \text{ for} i = 1, 2 \text{ and all } k,
\end{equation}
\begin{equation}\label{e:Ua3}
0 = \displaystyle\int_{\mathbb{R}^N}(- \Delta)^{\frac{\gamma}{2}}\tilde{v}^k(- \Delta)^{\frac{\gamma}{2}} \tilde{\delta}_{i}^kdx =  \displaystyle\int_{\mathbb{R}^N}(- \Delta)^{\frac{\gamma}{2}}\tilde{v}^k(- \Delta)^{\frac{\gamma}{2}} \frac{\partial\tilde{\delta}_{i}^k}{\partial \lambda_{i}^k}dx\quad \text{ for } i = 1, 2 \text{ and all } k,
\end{equation}
\begin{equation}\label{e:Ua4}
0 = \displaystyle\int_{\mathbb{R}^N}(- \Delta)^{\frac{\gamma}{2}}\tilde{v}^k(- \Delta)^{\frac{\gamma}{2}} \frac{\partial\tilde{\delta}_{i}^k}{\partial \tilde{y}_{k}^i}dx\quad \text{ for all } i \text{ and all } k,
\end{equation}
here
\begin{equation*}
\delta_{i}^k = \delta(y_k^i, \lambda_i^k),\quad \tilde{\delta}_{i}^k = \delta(\tilde{y}_k^i, \tilde{\lambda}_i^k).
\end{equation*}
In the following, we shall omit the index $k$. Then, by Lemma \ref{l:A1}, we have
\begin{equation*}
\lambda_i/\tilde{\lambda}_i = 1 + o(1),
\end{equation*}
\begin{equation*}
\lambda_i\tilde{\lambda}_i|x_i - \tilde{x}_i|^2 = o(1),
\end{equation*}
\begin{equation*}
|\alpha_i - \tilde{\alpha}_i| = o(1).
\end{equation*}
Also by (\ref{e:Ua1}) and (\ref{e:Ua3}), we get
\begin{equation*}
\displaystyle\sum_{j}\int_{\mathbb{R}^N}\left(\alpha_j (- \Delta)^{\frac{\gamma}{2}}\delta_j - \tilde{\alpha}_j (- \Delta)^{\frac{\gamma}{2}}\tilde{\delta}_j\right)(- \Delta)^{\frac{\gamma}{2}}\delta_idx = \int_{\mathbb{R}^N}(- \Delta)^{\frac{\gamma}{2}}\tilde{v}((- \Delta)^{\frac{\gamma}{2}} \delta_i - (- \Delta)^{\frac{\gamma}{2}} \tilde{\delta}_i)dx.
\end{equation*}
Let $\eta_i = \tilde{\lambda}_i/\lambda_i - 1$, $a_i = \tilde{\lambda}_i(x_i - \tilde{x}_i)$, $\mu_i = \alpha_i - \tilde{\alpha}_i$. Note that $|a_i| = o(1)$, $\eta_i = o(1)$, $\mu_i = o(1)$, it is easy to see that
\begin{equation*}
|\tilde{\delta}_j(y) - \delta_j(y)|\leq c(|\eta_j| + |a_j|)\delta_j(y),
\end{equation*}
\begin{eqnarray*}
\int_{\mathbb{R}^N}\left(\alpha_j (- \Delta)^{\frac{\gamma}{2}}\delta_j - \tilde{\alpha}_j (- \Delta)^{\frac{\gamma}{2}}\tilde{\delta}_j\right)(- \Delta)^{\frac{\gamma}{2}}\delta_idx & = & (\alpha_j - \tilde{\alpha}_j)\int_{\mathbb{R}^N}(- \Delta)^{\frac{\gamma}{2}}\delta_j(- \Delta)^{\frac{\gamma}{2}}\delta_idx\\
& + & \tilde{\alpha}_j\int_{\mathbb{R}^N}\delta_i^{\frac{N + 2\gamma}{N - 2\gamma}}(\delta_j - \tilde{\delta}_j)dx.
\end{eqnarray*}
Then we have
\begin{equation*}
\mu_i + \tilde{\alpha}_i\int_{\mathbb{R}^N}\delta_i^{\frac{N + 2\gamma}{N - 2\gamma}}(\delta_i - \tilde{\delta}_i)dx = o(1)(|\eta_j| + |a_j| + |\mu_j|) + o(1)\left(\int_{\mathbb{R}^N}|((- \Delta)^{\frac{\gamma}{2}} \delta_i - (- \Delta)^{\frac{\gamma}{2}} \tilde{\delta}_i)|^2dx\right)^{\frac{1}{2}},
\end{equation*}
and
\begin{equation*}
\int_{\mathbb{R}^N}\delta_i^{\frac{N + 2\gamma}{N - 2\gamma}}(\delta_i - \tilde{\delta}_i)dx = O(|a_i|^2 + |\eta_i|^2).
\end{equation*}
Hence,
\begin{equation*}
\mu_i = o(1)(|\eta_j| + |a_j| + |\mu_j|).
\end{equation*}
Similarly, it holds that
\begin{equation*}
\eta_i = o(1)(|\eta_j| + |a_j| + |\mu_j|)
\end{equation*}
and
\begin{equation*}
a_i = o(1)(|\eta_j| + |a_j| + |\mu_j|).
\end{equation*}
Thus
\begin{equation*}
\eta_i = 0, \quad a_i = 0, \quad \mu_i =0 \quad \text{ for all } i= 1, 2.
\end{equation*}
This completes the proof.
\end{proof}

By Lemma \ref{l:uniqueness}, we have the following direct consequence,
\begin{lemma}
There is a constant $\mu_0 > 0$ such that when $\mu\in (0, \mu_0]$ and each $u\in \dot{H}^\gamma(\mathbb{R}^N)$ satisfying
\begin{equation*}
\|u - \sum_{j = 1}^2 U_{\bar{y}^j, \bar{\lambda}_j}\|_{\dot{H}^\gamma(\mathbb{R}^N)}\leq \mu, \quad\text{for some } (\bar{y}, \bar{\lambda})\in D_\mu,
\end{equation*}
$u$ can be uniquely decomposed into
$$
u = \sum_{j = 1}^2\alpha_j U_{y^j, \lambda_j} + v,
$$
for some $(\alpha, y, \lambda, v)\in M_\mu$.
\end{lemma}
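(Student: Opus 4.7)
The plan is to deduce the claimed unique decomposition directly from Lemma \ref{l:existence} and Lemma \ref{l:uniqueness} by running a minimisation argument and reading off first-order conditions.

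First I would introduce the functional
\[
F(\alpha, y, \lambda) := \bigl\|u - \sum_{j=1}^2 \alpha_j U_{y^j, \lambda_j}\bigr\|_{\dot{H}^\gamma(\mathbb{R}^N)}^2, \qquad (\alpha, y, \lambda)\in B_{4\mu},
\]
and, picking $\mu_0$ no larger than the thresholds provided by the two preceding lemmas, apply Lemma \ref{l:existence} to conclude that $F$ attains its infimum at some $(\alpha, y, \lambda)\in B_{2\mu}$ but not on $B_{4\mu}\setminus B_{2\mu}$. Since the test point $(\alpha,y,\lambda)=(1,1,\bar y,\bar\lambda)$ lies in $B_{4\mu}$ and gives $F\le \mu^2$ by the hypothesis on $u$, we obtain in particular $\|v\|_{\dot{H}^\gamma(\mathbb{R}^N)}\le \mu$ for $v := u - \sum_{j=1}^2 \alpha_j U_{y^j,\lambda_j}$.

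Next I would use that the minimiser lies in the interior of $B_{4\mu}$, so that the partial derivatives of $F$ with respect to $\alpha_j$, $\lambda_j$ and $y^j_i$ vanish. These first-order conditions are precisely
\[
\langle v, U_{y^j,\lambda_j}\rangle = \langle v, \partial_{\lambda_j} U_{y^j,\lambda_j}\rangle = \langle v, \partial_{y^j_i} U_{y^j,\lambda_j}\rangle = 0
\]
for $j=1,2$ and $i=1,\dots,N$, which is exactly the statement $v\in E^2_{y,\lambda}$. Combined with $|\alpha_j-1|\le 2\mu\le \mu_0$, $(y,\lambda)\in D_{2\mu}\subset D_{\mu_0}$ and $\|v\|\le \mu$, this places $(\alpha,y,\lambda,v)\in M_\mu$ (after possibly shrinking $\mu_0$), giving existence of the decomposition.

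For uniqueness, suppose $u = \sum_{j=1}^2\tilde\alpha_j U_{\tilde y^j,\tilde\lambda_j} + \tilde v$ is another decomposition with $(\tilde\alpha,\tilde y,\tilde\lambda,\tilde v)\in M_\mu$. The orthogonality conditions built into the definition of $E^2_{\tilde y,\tilde\lambda}$ are precisely the vanishing of $\nabla F$ at $(\tilde\alpha,\tilde y,\tilde\lambda)$, so this triple is also a critical point of $F$ in $B_{2\mu}$. The proof of Lemma \ref{l:uniqueness}, which I would invoke here, establishes that any two points of $B_{2\mu}$ satisfying the relations \eqref{e:Ua1}--\eqref{e:Ua4} must coincide; applying it to $(\alpha,y,\lambda)$ and $(\tilde\alpha,\tilde y,\tilde\lambda)$ forces equality, whence also $\tilde v = v$. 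The main obstacle, namely the delicate quantitative comparison of two close bubble configurations, has already been absorbed into Lemma \ref{l:A1} and Lemma \ref{l:uniqueness}, so the remaining work here is essentially bookkeeping of the parameter ranges $\mu$ versus $2\mu$ versus $4\mu$.
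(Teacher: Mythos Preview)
Your proposal is correct and matches the paper's approach: the paper simply records this lemma as a ``direct consequence'' of Lemma~\ref{l:uniqueness} (and implicitly Lemma~\ref{l:existence}), and you have spelled out precisely why---minimise the distance functional, read off the orthogonality from the interior first-order conditions, and use the proof of Lemma~\ref{l:uniqueness} (which only uses the relations \eqref{e:Ua1}--\eqref{e:Ua4}) to rule out a second decomposition. The residual $\mu$ versus $2\mu$ discrepancy you flag is indeed only bookkeeping and is treated no more carefully in the paper itself.
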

Thus we clearly have
\begin{lemma}\label{e:criticalpoint}
For $\mu > 0$ sufficiently small, $(\alpha, y, \lambda, v)\in M_\mu$ is a critical point of $J_\varepsilon$ in $M_\varepsilon$ if and only if $u = \sum_{j = 1}^2\alpha_j U_{y^j, \lambda_j} + v$ is a critical point of $I_\varepsilon$ in $\dot{H}^\gamma(\mathbb{R}^N)$.
\end{lemma}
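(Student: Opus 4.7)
The plan is to read Lemma~\ref{l:uniqueness} as saying that the map
\[
\Psi:M_\mu\longrightarrow\dot H^\gamma(\mathbb{R}^N),\qquad \Psi(\alpha,y,\lambda,v)=\sum_{j=1}^{2}\alpha_j U_{y^j,\lambda_j}+v,
\]
is, for $\mu>0$ small, a $C^1$ local diffeomorphism onto an open neighbourhood of the 2-bubble configurations, with $J_\varepsilon=I_\varepsilon\circ\Psi$. Critical-point equivalence is then formally automatic; we carry it out at the level of the Lagrange multiplier system (\ref{e:lagrangian1})--(\ref{e:lagrangian4}) to match the paper's setup. The engine is the chain rule applied to $u=\sum_j\alpha_j U_{y^j,\lambda_j}+v$:
\[
\partial_{\alpha_j}J_\varepsilon=DI_\varepsilon(u)[U_{y^j,\lambda_j}],\qquad \partial_{y^j_i}J_\varepsilon=\alpha_j DI_\varepsilon(u)[\partial_{y^j_i}U_{y^j,\lambda_j}],
\]
\[
\partial_{\lambda_j}J_\varepsilon=\alpha_j DI_\varepsilon(u)[\partial_{\lambda_j}U_{y^j,\lambda_j}],\qquad \partial_vJ_\varepsilon=DI_\varepsilon(u)\quad\text{in }\dot H^\gamma(\mathbb{R}^N).
\]

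The direction $DI_\varepsilon(u)=0\Rightarrow$ (\ref{e:lagrangian1})--(\ref{e:lagrangian4}) is immediate from these identities, taking $A_j=B_j=C_{ji}=0$.

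For the converse, assume (\ref{e:lagrangian1})--(\ref{e:lagrangian4}) hold with some multipliers. Combining $\partial_vJ_\varepsilon=DI_\varepsilon(u)$ with (\ref{e:lagrangian2}) gives the expansion
\[
DI_\varepsilon(u)=\sum_jA_j U_{y^j,\lambda_j}+\sum_jB_j\frac{\partial U_{y^j,\lambda_j}}{\partial\lambda_j}+\sum_{j,i}C_{ji}\frac{\partial U_{y^j,\lambda_j}}{\partial y^j_i},
\]
so it suffices to show that all the multipliers vanish. Pairing this identity successively with the $2N+4$ generators $U_{y^k,\lambda_k},\partial_{\lambda_k}U_{y^k,\lambda_k},\partial_{y^k_h}U_{y^k,\lambda_k}$, and substituting (\ref{e:lagrangian1}), (\ref{e:lagrangian3}), (\ref{e:lagrangian4}) for the left-hand sides using the chain-rule identities, one obtains a square homogeneous linear system
\[
\bigl[\mathcal G(y,\lambda)-\mathcal R(y,\lambda,v)\bigr](A,B,C)^T=0,
\]
where $\mathcal G$ is the Gram matrix of the bubble family in $\dot H^\gamma(\mathbb{R}^N)$ and $\mathcal R$ collects the $v$-dependent Lagrange terms appearing on the right-hand sides of (\ref{e:lagrangian3})--(\ref{e:lagrangian4}), so that $\|\mathcal R\|=O(\|v\|_{\dot H^\gamma(\mathbb{R}^N)})=O(\mu)$.

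The main obstacle, and the step requiring real work, is invertibility of the coefficient matrix uniformly on $M_\mu$. The argument proceeds in three pieces. First, by the scale-translation invariance $U_{y,\lambda}(x)=\lambda^{(N-2\gamma)/2}U_{0,1}(\lambda(x-y))$ each diagonal block $(j=k)$ of $\mathcal G$ reduces to the fixed Gram matrix of $\{U_{0,1},\partial_\lambda U_{0,1},\partial_{y_i}U_{0,1}\}$ in $\dot H^\gamma(\mathbb{R}^N)$, whose non-degeneracy follows from the classical non-degeneracy of the linearized operator $(-\Delta)^\gamma-(2^*_\gamma-1)U_{0,1}^{2^*_\gamma-2}$ at the fractional bubble (provable, for instance, via the Caffarelli--Silvestre extension and the explicit formula (\ref{e:extremals}); see Appendix~B of the paper where the non-degenerate quadratic form is invoked). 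Second, the off-diagonal blocks $(j\neq k)$ consist of interaction integrals controlled by $\varepsilon_{12}^{\,\tau}$ for some $\tau>0$, hence arbitrarily small on $D_\mu$ after shrinking $\mu$. Third, $\mathcal R$ is controlled by $\mu$. Thus the full coefficient matrix is a small perturbation of the invertible $\mathcal G$; choosing $\mu$ small enough, it remains invertible uniformly, forcing $A_j=B_j=C_{ji}=0$, and therefore $DI_\varepsilon(u)=0$. This completes both directions.
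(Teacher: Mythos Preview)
Your argument is correct. The paper's own proof is a single sentence: after the unique-decomposition result (Lemma~\ref{l:uniqueness} and its corollary) it simply records ``Thus we clearly have'', the implicit reasoning being exactly your opening paragraph --- $\Psi$ is a local $C^1$ change of variables with $J_\varepsilon=I_\varepsilon\circ\Psi$, so critical points correspond. The paper does \emph{not} write out the Lagrange-multiplier elimination you perform.

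Your explicit verification via the system $[\mathcal G-\mathcal R](A,B,C)^T=0$ is a legitimate alternative that sidesteps the need to check that $\Psi^{-1}$ is $C^1$. Two minor remarks. First, non-degeneracy of the diagonal Gram block is simply linear independence of $\{U_{0,1},\partial_\lambda U_{0,1},\partial_{y_i}U_{0,1}\}$ in $\dot H^\gamma$ (indeed the block is diagonal by parity/scaling); invoking the spectral non-degeneracy of the linearized operator is unnecessary and not quite the right justification. Second, $\mathcal G$ is \emph{not} uniformly invertible on $D_\mu$ as written, since $\|\partial_{\lambda_k}U_{y^k,\lambda_k}\|\sim\lambda_k^{-1}$ and $\|\partial_{y^k_i}U_{y^k,\lambda_k}\|\sim\lambda_k$; you should rescale the basis (equivalently, the unknowns $B_k\mapsto\lambda_k^{-1}B_k$, $C_{ki}\mapsto\lambda_k C_{ki}$) before asserting that the off-diagonal blocks and $\mathcal R$ are small perturbations. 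After this standard rescaling your conclusion goes through.
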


Finally, we prove
\begin{lemma}\label{e:positive}
If $u_\varepsilon = \sum_{j = 1}^2\alpha_j U_{y^j, \lambda_j} + v$ is a critical point of (\ref{e:functional}), then $u$ is positive.
\end{lemma}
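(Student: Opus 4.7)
The plan is to decompose $u_\varepsilon = u_+ - u_-$ where $u_\pm = \max(\pm u_\varepsilon, 0)$, to first show that $u_- \equiv 0$ by an energy inequality, and then to upgrade $u_\varepsilon \geq 0$ to $u_\varepsilon > 0$ via a strong maximum principle for $(-\Delta)^\gamma$. The crucial structural input is that $u_\varepsilon = \sum_{j=1}^2 \alpha_j U_{y^j,\lambda_j} + v$ with $\alpha_j > 0$ close to $1$ and $U_{y^j,\lambda_j} > 0$ on $\mathbb{R}^N$, which gives the pointwise bound $0 \leq u_-(x) \leq |v(x)|$. Combined with the Sobolev embedding this yields
\[
\|u_-\|_{L^{2^*_\gamma}(\mathbb{R}^N)} \leq \|v\|_{L^{2^*_\gamma}(\mathbb{R}^N)} \leq C\|v\|_{\dot H^\gamma(\mathbb{R}^N)},
\]
which is as small as we like once $\varepsilon$ is small, by Lemma \ref{l:lemma2.1}.

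Next I would test the Euler--Lagrange equation $(-\Delta)^\gamma u_\varepsilon = (1+\varepsilon K)|u_\varepsilon|^{2^*_\gamma - 2}u_\varepsilon$ against the admissible function $-u_- \in \dot H^\gamma(\mathbb{R}^N)$. On the nonlinear side one gets exactly
\[
-\int_{\mathbb{R}^N}(1+\varepsilon K)|u_\varepsilon|^{2^*_\gamma - 2}u_\varepsilon\,(-u_-)\,dx = \int_{\mathbb{R}^N}(1+\varepsilon K)\,u_-^{2^*_\gamma}\,dx.
\]
For the quadratic side I would use the nonlocal Kato-type inequality: from $(u_+(x)-u_+(y))(u_-(x)-u_-(y)) \leq 0$ one derives
\[
(u_\varepsilon(x)-u_\varepsilon(y))(u_-(y)-u_-(x)) \geq (u_-(x)-u_-(y))^2,
\]
and integrating against $C(N,\gamma)|x-y|^{-(N+2\gamma)}$ gives
\[
\langle (-\Delta)^{\gamma/2} u_\varepsilon,\,(-\Delta)^{\gamma/2}(-u_-)\rangle_{L^2} \geq \|u_-\|^2_{\dot H^\gamma(\mathbb{R}^N)}.
\]
Equating the two expressions and applying the Sobolev inequality \eqref{e:embedding} produces
\[
\|u_-\|^2_{\dot H^\gamma(\mathbb{R}^N)} \leq (1+\varepsilon\|K\|_\infty)\,\|u_-\|^{2^*_\gamma}_{L^{2^*_\gamma}} \leq C\,\|u_-\|^{2^*_\gamma - 2}_{L^{2^*_\gamma}}\,\|u_-\|^2_{\dot H^\gamma(\mathbb{R}^N)}.
\]
If $u_- \not\equiv 0$, this forces $\|u_-\|_{L^{2^*_\gamma}} \geq c_0 > 0$ for a constant $c_0$ independent of $\varepsilon$, contradicting the $o(1)$ bound from the previous paragraph. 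Hence $u_- \equiv 0$, i.e.\ $u_\varepsilon \geq 0$.

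Finally, $u_\varepsilon$ is not identically zero (since $\alpha_j \to 1$, $U_{y^j,\lambda_j} > 0$ and $\|v\|_{\dot H^\gamma}$ is small), and it satisfies the weak equation $(-\Delta)^\gamma u_\varepsilon = (1+\varepsilon K)u_\varepsilon^{2^*_\gamma - 1} \geq 0$ with $1+\varepsilon K \geq 1/2$. A strong maximum principle for the fractional Laplacian, applied to a nonnegative nontrivial $\dot H^\gamma$-solution of such an equation, then yields $u_\varepsilon > 0$ in $\mathbb{R}^N$. I expect the main technical point to be justifying the nonlocal Kato inequality and the fact that $u_-$ is an admissible test function in $\dot H^\gamma$; the former is a standard consequence of the disjoint-support structure of $u_+$ and $u_-$, while the latter follows from the contractivity of truncations in the Gagliardo seminorm. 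Everything else is routine given the smallness supplied by Lemma \ref{l:lemma2.1}.
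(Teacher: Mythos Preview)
Your proof is correct and follows essentially the same route as the paper: test the Euler--Lagrange equation against the negative part, use the nonlocal Kato inequality to bound the Gagliardo seminorm from below, combine with Sobolev to force $\|u_-\|_{L^{2^*_\gamma}}\geq c_0$, and contradict the smallness of $\|v\|$; then upgrade nonnegativity to positivity via the strong maximum principle for $(-\Delta)^\gamma$.

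One small remark: your pointwise observation $0\le u_-(x)\le |v(x)|$ (valid since $\sum_j \alpha_j U_{y^j,\lambda_j}>0$) is exactly the bound needed to close the contradiction, and it is cleaner than what appears in the paper, where the written chain $\int|u_\varepsilon^-|^{2^*_\gamma}\le \int|u_\varepsilon|^{2^*_\gamma}\to 0$ is a typo---the intended inequality is the one you state, $\int|u_\varepsilon^-|^{2^*_\gamma}\le \int|v|^{2^*_\gamma}\to 0$.
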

\begin{proof}
We follow the idea of \cite{ReyOlivier1990}.
Let us write $u_\varepsilon = u_\varepsilon^+ + u_\varepsilon^-$, where $u_\varepsilon^+ = \max(0, u_\varepsilon)$ and $u_\varepsilon^- = \min(0, u_\varepsilon)$. Suppose $u_\varepsilon^-\neq 0$. Testing equation (\ref{e:main}) by $u_\varepsilon^-$, we have
\begin{equation*}
\displaystyle\int\int_{\mathbb{R}^N\times\mathbb{R}^N}\frac{(u_\varepsilon(x) - u_\varepsilon(y))(u_\varepsilon^-(x) - u_\varepsilon^-(y))}{|x -y|^{N + 2\gamma}}dxdy = \int_{\mathbb{R}^N}(1 + \varepsilon K(x))|u_\varepsilon^-|^{2^*_\gamma}dx.
\end{equation*}
Since
\begin{equation*}
\displaystyle\int\int_{\mathbb{R}^N\times\mathbb{R}^N}\frac{(u_\varepsilon(x) - u_\varepsilon(y))(u_\varepsilon^-(x) - u_\varepsilon^-(y))}{|x -y|^{N + 2\gamma}}dxdy \geq \displaystyle\int\int_{\mathbb{R}^N\times\mathbb{R}^N}\frac{|u_\varepsilon^-(x) - u_\varepsilon^-(y)|^2}{|x -y|^{N + 2\gamma}}dxdy,
\end{equation*}
by the Sobolev embedding inequality, it holds that
\begin{equation*}
\int_{\mathbb{R}^N}|u_\varepsilon^-|^{2^*_\gamma}dx \geq c_0.
\end{equation*}
But
\begin{equation*}
\int_{\mathbb{R}^N}|u_\varepsilon^-|^{2^*_\gamma}dx \leq \int_{\mathbb{R}^N}|u_\varepsilon|^{2^*_\gamma}dx \to 0\quad \text{as}\,\,\,\, \varepsilon \to 0,
\end{equation*}
which is a contradiction. Hence $u_\varepsilon \geq 0$. By the maximum principle for the fractional Laplacian, see \cite{CabreSirepreprint}, $u_\varepsilon > 0$. This completes the proof.
\end{proof}

\section{}
In this section, we prove the invertibility of $Q_\varepsilon$. We follow the idea of \cite{DancerYan1999} and \cite{DancerYan2007}. First, we prove that
\begin{lemma}\label{l:lemmaC1}
Suppose $(y, \lambda)\in D_\mu$, $\mu$ is small enough, the operator defined by
\begin{eqnarray*}
\langle A_{y, \lambda} v, w\rangle = \langle v, w\rangle -(2^*_\gamma - 1)\displaystyle\int_{\mathbb{R}^n}\sum_{j = 1}^2 U_{y^j, \lambda_{j}}^{2^*_\gamma - 2}vwdx,\quad v, w\in E^2_{y, \lambda}
\end{eqnarray*}
satisfies
\begin{eqnarray*}
\|A_{y, \lambda} v\|_{\dot{H}^\gamma(\mathbb{R}^N)} \geq c_0\|v\|_{\dot{H}^\gamma(\mathbb{R}^N)}\quad \text{ for }v\in E^2_{y, \lambda}.
\end{eqnarray*}.
\end{lemma}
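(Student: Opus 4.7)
The plan is a contradiction argument. Assume the assertion fails; then there exist $\mu_n\downarrow 0$, parameters $(y_n,\lambda_n)\in D_{\mu_n}$, and $v_n\in E^2_{y_n,\lambda_n}$ with $\|v_n\|_{\dot{H}^\gamma(\mathbb{R}^N)}=1$ and $\|A_{y_n,\lambda_n}v_n\|_{\dot{H}^\gamma(\mathbb{R}^N)}=o(1)$. Pairing $A_{y_n,\lambda_n}v_n$ with $v_n$ gives
\[
1-(2^*_\gamma-1)\sum_{j=1}^{2}\int_{\mathbb{R}^N}U_{y^j_n,\lambda_{n,j}}^{2^*_\gamma-2}v_n^2\,dx=o(1),
\]
so it suffices to prove that each integral on the right tends to zero, which will then yield the contradiction $1=o(1)$.

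For each $j\in\{1,2\}$ I would rescale by $\tilde v_n^j(x):=\lambda_{n,j}^{-(N-2\gamma)/2}v_n(y^j_n+\lambda_{n,j}^{-1}x)$. Using the scaling identity $(-\Delta)^{\gamma/2}[f(\lambda\cdot)](z)=\lambda^{\gamma}[(-\Delta)^{\gamma/2}f](\lambda z)$, this map is an isometry of $\dot{H}^\gamma(\mathbb{R}^N)$ that pulls $U_{y^j_n,\lambda_{n,j}}$ back to the standard bubble $U_{0,1}$, and it translates the orthogonality relations defining $E^2_{y_n,\lambda_n}$ into orthogonality of $\tilde v_n^j$ to $U_{0,1}$, $\partial_\lambda U_{0,1}$, and $\partial_{y_i}U_{0,1}$ at the reference parameters $\lambda=1$, $y=0$. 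Extracting a subsequence, $\tilde v_n^j\rightharpoonup\tilde v_\infty^j$ in $\dot{H}^\gamma(\mathbb{R}^N)$, with the same orthogonality conditions preserved. Because $y^1_n\to z^1\neq z^2\leftarrow y^2_n$ and $\lambda_{n,j}\geq 1/\mu_n\to+\infty$, the quantity $\lambda_{n,1}\lambda_{n,2}|y^1_n-y^2_n|^2\to+\infty$, so in each rescaled frame the other bubble concentrates at spatial infinity and its local contribution disappears. Testing $A_{y_n,\lambda_n}v_n\to 0$ against compactly supported test functions pulled back by the rescaling, and passing to the limit, then yields
\[
(-\Delta)^\gamma\tilde v_\infty^j=(2^*_\gamma-1)U_{0,1}^{2^*_\gamma-2}\tilde v_\infty^j\quad\text{in }\mathbb{R}^N.
\]

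By the non-degeneracy of the Aubin--Talenti bubble for the fractional critical equation (D\'avila--del Pino--Sire, built on the classification recalled in \cite{ChenLiCPAM2006} and \cite{LiYanyanJEMS2005}), the kernel of the operator $(-\Delta)^\gamma-(2^*_\gamma-1)U_{0,1}^{2^*_\gamma-2}$ is exactly the span of $\partial_\lambda U_{0,1}$ and the $\partial_{y_i}U_{0,1}$, $i=1,\ldots,N$. Combined with the inherited orthogonality conditions this forces $\tilde v_\infty^j\equiv 0$. A change of variables rewrites the crucial integral as $\int_{\mathbb{R}^N}U_{y^j_n,\lambda_{n,j}}^{2^*_\gamma-2}v_n^2\,dx=\int_{\mathbb{R}^N}U_{0,1}^{2^*_\gamma-2}(\tilde v_n^j)^2\,dx$, and the latter converges to $\int U_{0,1}^{2^*_\gamma-2}(\tilde v_\infty^j)^2=0$ by a standard truncation argument: on a large ball the fractional Rellich embedding gives strong $L^2_{\mathrm{loc}}$-convergence, while on the complement the decay $U_{0,1}(x)\sim|x|^{-(N-2\gamma)}$ combined with the uniform Sobolev bound on $\tilde v_n^j$ in $L^{2^*_\gamma}$ makes the tail uniformly small.

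The main obstacle, in my view, is the passage to the limit in the rescaled equation: one must commute the nonlocal operator $(-\Delta)^\gamma$ with the rescaling inside the duality pairing against test functions, and one must verify that the cross-bubble term $\int U_{y^1_n,\lambda_{n,1}}^{2^*_\gamma-2}(\cdot)(\cdot)$ truly contributes nothing to the rescaled limit equation around bubble $2$, and conversely. Both depend quantitatively on the bubble-separation estimate $\lambda_{n,1}\lambda_{n,2}|y^1_n-y^2_n|^2\to+\infty$ and on $\lambda_{n,j}\to+\infty$. The fractional non-degeneracy statement is the other essential input; without it one cannot identify the weak limit $\tilde v_\infty^j$ as an element of the known kernel and then kill it with the orthogonality conditions.
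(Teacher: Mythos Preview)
Your proposal is correct and follows essentially the same route as the paper: a contradiction argument with rescaling around each bubble, extraction of a weak limit, identification of that limit as a kernel element via the D\'avila--del Pino--Sire non-degeneracy result, elimination of the limit using the inherited orthogonality conditions, and a truncation (near/far splitting with local compactness plus bubble decay) to show the potential integrals vanish. The only cosmetic differences are your normalization $\|v_n\|_{\dot H^\gamma}=1$ versus the paper's $(\lambda_1\lambda_2)^{-N/2}$, and the paper's explicit projection of test functions onto $F^2_{y_k,\lambda^k}$ to justify the limiting equation, which is precisely the ``main obstacle'' you flag at the end.
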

\begin{proof}
We argue by contradiction. Suppose that there exists $\mu_k \to 0$ and $(y_k, \lambda^k)\in D_{\mu_k}$, $v_k\in E^2_{y_k, \lambda^k}$ such that
\begin{eqnarray*}
\|A_{y_k, \lambda^k} v_k\|_{\dot{H}^\gamma(\mathbb{R}^N)} = o(1)\|v_k\|_{\dot{H}^\gamma(\mathbb{R}^N)}.
\end{eqnarray*}

We may assume
\begin{equation*}
\|v_k\|_{\dot{H}^\gamma(\mathbb{R}^N)} = (\lambda_1^k\lambda_2^k)^{-\frac{N}{2}}.
\end{equation*}
So
\begin{eqnarray*}
\left|\langle A_{y_k, \lambda^k} v_k, w\rangle\right| = o((\lambda_1^k\lambda_2^k)^{-\frac{N}{2}})\|w\|_{\dot{H}^\gamma(\mathbb{R}^N)}.
\end{eqnarray*}
That is,
\begin{eqnarray*}
\displaystyle\int_{\mathbb{R}^n}(- \Delta)^{\frac{\gamma}{2}}v_k(- \Delta)^{\frac{\gamma}{2}}wdx &-&(2^*_\gamma - 1)\displaystyle\int_{\mathbb{R}^n}\sum_{j = 1}^2 U_{y_k^j, \lambda_{j}^k}^{2^*_\gamma - 2}v_kwdx\\ &=& o((\lambda_1^k\lambda_2^k)^{-\frac{N}{2}})\|w\|_{\dot{H}^\gamma(\mathbb{R}^N)},\quad w\in E^2_{y_k, \lambda^k}.
\end{eqnarray*}

For each fixed $i$, let
$$
\bar{v}_k(x) = v_k(\frac{1}{\lambda^k_i}x + y^i_{k}),
$$
then we have
\begin{eqnarray*}
\displaystyle\int_{\mathbb{R}^n}(- \Delta)^{\frac{\gamma}{2}}\bar{v}_k(- \Delta)^{\frac{\gamma}{2}}wdx &-&(2^*_\gamma - 1)\displaystyle\int_{\mathbb{R}^n}\frac{1}{(1 + |x|^2)^{2\gamma}}\bar{v}_kwdx\\ &-&(2^*_\gamma - 1)\displaystyle\int_{\mathbb{R}^n}\frac{(\lambda_j^k/\lambda_i^k)^{2\gamma}}{(1 + |\frac{\lambda_j^k}{\lambda_i^k}x + \lambda_j^k(y^i_k - y^j_k)|^2)^{2\gamma}}\bar{v}_kwdx\\ &=& (\lambda_i^k)^{- 2\gamma}(\lambda_j^k)^{-\frac{N}{2}}o(1)\|w\|_{\dot{H}^\gamma(\mathbb{R}^N)}, \quad w\in F^2_{y_k, \lambda^k}
\end{eqnarray*}
where
\begin{eqnarray*}
F^2_{y_k, \lambda^k} = \{v\in \dot{H}^\gamma(\mathbb{R}^N)\,\,\,\,|&&\displaystyle\langle U_{y^j_k, \lambda_j^k}(\frac{1}{\lambda^k_i}\cdot + y^i_{k}), v\rangle = \langle\frac{\partial U_{y^j_k, \lambda_j^k}}{\partial \lambda_j^k}(\frac{1}{\lambda^k_i}\cdot + y^i_{k}), v \rangle\\ &&= \langle\frac{\partial U_{y^j_k, \lambda_j^k}}{\partial y^j_{ki}}(\frac{1}{\lambda^k_i}\cdot + y^i_{k}), v\rangle = 0,\text{ for } j = 1,2,\quad i = 1, \cdot\cdot\cdot, N\}.
\end{eqnarray*}
Since $\|\bar{v}_k\|_{\dot{H}^\gamma(\mathbb{R}^N)} = (\lambda_i^k)^{- 2\gamma}(\lambda_j^k)^{-\frac{N}{2}}$, we may conclude that
\begin{equation*}
\bar{v}_k\rightharpoonup v \quad \text{ weakly in } \dot{H}^\gamma(\mathbb{R}^N),
\end{equation*}
\begin{equation*}
\bar{v}_k\rightarrow v \quad \text{ strongly in } L^p(\mathbb{R}^N) \text{ with } p\in [2, 2^*_\gamma).
\end{equation*}
Moreover, it is easy to see that $v$ satisfies
\begin{eqnarray*}
\displaystyle\langle U, v\rangle = \langle\frac{\partial U}{\partial x_j}, v \rangle  = 0.
\end{eqnarray*}

Now we claim that $v = 0$. Assume this for the moment. Since for each $L > 0$, we have
\begin{eqnarray*}
\displaystyle\sum_{j = 1}^2 \int_{\mathbb{R}^n}U_{y_k^j, \lambda_{j}^k}^{2^*_\gamma - 2}v_k^2dx & = &\sum_{j = 1}^2 \int_{B_{L\frac{1}{\lambda^k_i}}(y^i_{k})}U_{y_k^j, \lambda_{j}^k}^{2^*_\gamma - 2}v_k^2dx + (\frac{1}{L^{2N - 1}})^{\frac{4\gamma}{N + 2\gamma}}(\lambda_1^k\lambda_2^k)^{-N}\\
& = & \left((\frac{1}{\lambda^k_1})^{-N} + (\frac{1}{\lambda^k_1})^{-N}\right)o(1) + (\frac{1}{L^{2N - 1}})^{\frac{4\gamma}{N + 2\gamma}}(\lambda_1^k\lambda_2^k)^{-N}\\& = &(\lambda_1^k\lambda_2^k)^{-N}(o(1) + (\frac{1}{L^{2N - 1}})^{\frac{4\gamma}{N + 2\gamma}}).
\end{eqnarray*}
Then we have
\begin{equation*}
\|v_k\|^2_{\dot{H}^\gamma(\mathbb{R}^N)} = (\lambda_1^k\lambda_2^k)^{-N}(o(1) + (\frac{1}{L^{2N - 1}})^{\frac{4\gamma}{N + 2\gamma}}).
\end{equation*}
This is a contradiction.

So it  remains to prove that $v = 0$. First, we claim that $v$ satisfies
\begin{equation}\label{e:appendixC}
\displaystyle\int_{\mathbb{R}^n}(- \Delta)^{\frac{\gamma}{2}}v(- \Delta)^{\frac{\gamma}{2}}wdx -(2^*_\gamma - 1)\displaystyle\int_{\mathbb{R}^n}U^{2^*_\gamma - 2}vwdx = 0,\quad w\in F,
\end{equation}
where
\begin{eqnarray*}
F = \{w\in \dot{H}^\gamma(\mathbb{R}^N)\,\,|\,\,\displaystyle\langle U, w\rangle = \langle\frac{\partial U}{\partial x_j}, w\rangle = 0, \quad j = 1,2,\cdot\cdot\cdot, N\}.
\end{eqnarray*}
Indeed, for each $w\in F$, we can choose $\alpha_j^{(k)}$, $\beta_j^{(k)}$ and $\gamma_{jl}^{(k)}$ such that
\begin{eqnarray*}
\eta_k & = & w - \sum_{j = 1}^2\alpha_j^{(k)}U_{y^j_k, \lambda_j^k}(\frac{1}{\lambda^k_i}\cdot + y^i_{k}) - \sum_{j = 1}^2\beta_j^{(k)}\frac{\partial U_{y^j_k, \lambda_j^k}}{\partial \lambda_j^k}(\frac{1}{\lambda^k_i}\cdot + y^i_{k})\\ &-& \sum_{j = 1}^2\sum_{l = 1}^2\gamma_{jl}^{(k)}\frac{\partial U_{y^j_k, \lambda_j^k}}{\partial y^j_{kl}}(\frac{1}{\lambda^k_i}\cdot + y^i_{k})\in F^2_{y_k, \lambda^k}.
\end{eqnarray*}
And it is easy to see that $\alpha_j^{(k)} \to 0$, $\beta_j^{(k)}\to 0$ and $\gamma_{jl}^{(k)}\to 0$ as $k \to \infty$. Hence we have (\ref{e:appendixC}).
Then from the non-degenerate result of \cite{DavilaDelpinoSire2013}, $v = 0$. This completes the proof.
\end{proof}
\begin{lemma}\label{l:lemmaC2}
Suppose $(y, \lambda)\in D_\mu$, $\mu$ and $\varepsilon$ small enough, then there exists $\delta > 0$ such that for $v\in E^2_{y, \lambda}$ we have
\begin{eqnarray*}
\|v\|^2_{\dot{H}^\gamma(\mathbb{R}^N)} -(2^*_\gamma - 1)\displaystyle\int_{\mathbb{R}^n}\left(1 + \varepsilon K\right)\left(\sum_{j = 1}^2 \hat{\alpha}U_{y^j, \lambda_{j}}\right)^{2^*_\gamma - 2}v^2dx\geq \delta \|v\|^2_{\dot{H}^\gamma(\mathbb{R}^N)}.
\end{eqnarray*}
\end{lemma}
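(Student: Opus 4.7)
The plan is to view the quadratic form
\begin{equation*}
Q_\varepsilon(v):=\|v\|^2_{\dot{H}^\gamma(\mathbb{R}^N)}-(2^*_\gamma-1)\int_{\mathbb{R}^N}(1+\varepsilon K)\Bigl(\sum_{j=1}^2\hat{\alpha}_jU_{y^j,\lambda_j}\Bigr)^{2^*_\gamma-2}v^2\,dx
\end{equation*}
as a small perturbation of the unperturbed quadratic form
\begin{equation*}
Q_0(v):=\|v\|^2_{\dot{H}^\gamma(\mathbb{R}^N)}-(2^*_\gamma-1)\int_{\mathbb{R}^N}\sum_{j=1}^2U_{y^j,\lambda_j}^{2^*_\gamma-2}v^2\,dx=\langle A_{y,\lambda}v,v\rangle
\end{equation*}
associated with the operator of Lemma \ref{l:lemmaC1}. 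My first step is to observe that the contradiction argument of Lemma \ref{l:lemmaC1} in fact upgrades to uniform coercivity of this quadratic form on $E^2_{y,\lambda}$: if there were $(y_k,\lambda^k)\in D_{\mu_k}$ with $\mu_k\to 0$ and $v_k\in E^2_{y_k,\lambda^k}$ with $\|v_k\|=1$ and $Q_0(v_k)=o(1)$, the identical blow-up and non-degeneracy argument (via the result of \cite{DavilaDelpinoSire2013}) produces a limit $v=0$ on $F$, and then the mass estimate $\int\sum_j U_j^{2^*_\gamma-2}v_k^2\to 0$ in the proof of Lemma \ref{l:lemmaC1} forces $1=\|v_k\|^2=Q_0(v_k)+o(1)=o(1)$, a contradiction. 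Hence there exists $\delta_0>0$ such that $Q_0(v)\geq\delta_0\|v\|^2$ for all $v\in E^2_{y,\lambda}$ and all $(y,\lambda)\in D_\mu$ with $\mu$ small.

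Given this uniform coercivity, it suffices to show that the difference $D(v):=Q_\varepsilon(v)-Q_0(v)=o(1)\|v\|^2$ as $\varepsilon,\mu\to 0$. I would decompose
\begin{equation*}
\Bigl(\sum_{j=1}^2\hat{\alpha}_jU_{y^j,\lambda_j}\Bigr)^{2^*_\gamma-2}=\sum_{j=1}^2\hat{\alpha}_j^{2^*_\gamma-2}U_{y^j,\lambda_j}^{2^*_\gamma-2}+R(x),
\end{equation*}
where $R$ is a pure interaction remainder vanishing whenever one of the bubbles does. Then $D(v)$ splits into three pieces: the $\varepsilon K$ contribution, the scalar correction $\sum_j(\hat{\alpha}_j^{2^*_\gamma-2}-1)\int U_j^{2^*_\gamma-2}v^2$ of size $O(\varepsilon)$ (since $\hat{\alpha}_j=1+O(\varepsilon)$), and the interaction $\int R\,v^2$. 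Each piece is controlled by H\"older with exponents $N/(2\gamma)$ and $N/(N-2\gamma)$ together with the Sobolev embedding $\dot{H}^\gamma\hookrightarrow L^{2^*_\gamma}$, namely
\begin{equation*}
\int_{\mathbb{R}^N} w\,v^2\,dx\leq\|w\|_{L^{N/(2\gamma)}(\mathbb{R}^N)}\|v\|_{L^{2^*_\gamma}(\mathbb{R}^N)}^2\leq C\|w\|_{L^{N/(2\gamma)}(\mathbb{R}^N)}\|v\|^2_{\dot{H}^\gamma(\mathbb{R}^N)}.
\end{equation*}
For the first two pieces, this reduces matters to $\|U_{y^j,\lambda_j}^{2^*_\gamma-2}\|_{L^{N/(2\gamma)}}$ being bounded (indeed scale-invariant, since $(2^*_\gamma-2)\cdot N/(2\gamma)=2^*_\gamma$), yielding $O(\varepsilon)\|v\|^2$ bounds.

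The main obstacle is the interaction piece. Since $|y^1-y^2|\geq|z^1-z^2|-2\mu>0$ and $\lambda_j\geq 1/\mu$, we have $\lambda_1\lambda_2|y^1-y^2|^2\to\infty$ as $\mu\to 0$. An elementary power inequality, treating the cases $2^*_\gamma-2\leq 1$ and $2^*_\gamma-2>1$ separately (by concavity or by the mean-value theorem applied to $t\mapsto t^{2^*_\gamma-2}$, respectively), bounds $|R(x)|$ pointwise by a finite sum of mixed terms $U_{y^1,\lambda_1}^{p_1}U_{y^2,\lambda_2}^{p_2}$ with $p_1,p_2>0$ and $p_1+p_2=2^*_\gamma-2$. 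Hence $\|R\|_{L^{N/(2\gamma)}}$ is controlled by finitely many bubble interaction integrals $\int U_{y^1,\lambda_1}^{q_1}U_{y^2,\lambda_2}^{q_2}\,dx$ with $q_1,q_2>0$ and $q_1+q_2=2^*_\gamma$. By a standard rescaling argument (fixing one bubble and using the algebraic decay of the other away from its center), these integrals vanish as $\lambda_1\lambda_2|y^1-y^2|^2\to\infty$, which is exactly the type of estimate established throughout Appendices B and C. Combining the three $o(1)\|v\|^2$ bounds with the coercivity of $Q_0$ gives $Q_\varepsilon(v)\geq(\delta_0-o(1))\|v\|^2\geq\delta\|v\|^2$ for some $\delta>0$ once $\mu$ and $\varepsilon$ are sufficiently small.
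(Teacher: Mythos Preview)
The paper does not give its own proof of this lemma; it writes only ``The proof of this lemma is the same as Lemma A.4 in \cite{CaoNoussairYan2002}, so we omit it.'' Your perturbative argument --- coercivity of the unperturbed form $Q_0=\langle A_{y,\lambda}\cdot,\cdot\rangle$ on $E^2_{y,\lambda}$ via a blow-up/non-degeneracy contradiction, then $Q_\varepsilon-Q_0=o(1)\|v\|^2$ from H\"older, Sobolev, $\hat\alpha_j=1+O(\varepsilon)$, and the bubble-interaction smallness --- is precisely the standard route taken in that reference and its antecedents (Bahri--Coron, Rey), so there is nothing further to compare.

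One small precision on your first step: Lemma~\ref{l:lemmaC1} gives $\|A_{y,\lambda}v\|\geq c_0\|v\|$, which for self-adjoint $A_{y,\lambda}$ does not by itself fix the sign of $\langle A_{y,\lambda}v,v\rangle$. When you rerun the contradiction for the quadratic form, the blow-up limit is not a solution of the linearized equation (as in Lemma~\ref{l:lemmaC1}) but an element of $F$ on which you need the \emph{single-bubble} coercivity $\|w\|^2-(2^*_\gamma-1)\int U^{2^*_\gamma-2}w^2\geq c\|w\|^2$. This follows from the spectral information behind the non-degeneracy result of \cite{DavilaDelpinoSire2013} (first eigenvalue $1/(2^*_\gamma-1)<1$ with eigenfunction $U$, second eigenvalue $1$ spanned by the kernel directions, all higher eigenvalues $>1$), so your conclusion stands; the input is just slightly stronger than the kernel statement alone.
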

The proof of this lemma is the same as Lemma A.4 in \cite{CaoNoussairYan2002}, so we omit it.

\section{}
In this appendix, we prove some estimates needed in the proof of our main results.
\begin{lemma}
Let $\alpha$, $\beta > 1$ such that $\alpha + \beta = 2^*_\gamma$, then there exists $\theta > 0$ such that
\begin{equation*}
\displaystyle\int_{\mathbb{R}^N}U_{y^i, \lambda_i}^{\frac{N + 2}{N - 2}}U_{y^j, \lambda_j}dx = C_0^{\frac{2N}{N - 2\gamma}}C_1 \varepsilon_{ij} + O(\varepsilon_{ij}^{\frac{N}{N - 2\gamma}}),
\end{equation*}
\begin{equation*}
\displaystyle\int_{\mathbb{R}^N}U_{y^i, \lambda_i}^{\frac{N}{N - 2\gamma}}U_{y^j, \lambda_j}^{\frac{N}{N - 2\gamma}}dx = O(\varepsilon_{ij}^{\frac{N}{N - 2\gamma}}\log \varepsilon_{ij}^{-1}),
\end{equation*}
\begin{equation*}
\displaystyle\int_{\mathbb{R}^N}U_{y^i, \lambda_i}^{\alpha}U_{y^j, \lambda_j}^{\beta}dx = O(\varepsilon_{ij}(\log \varepsilon_{ij}^{-1})^{\frac{N - 2\gamma}{N}\theta})\quad\text{ with } \theta = \inf(\alpha, \beta),
\end{equation*}
\begin{equation*}
|\frac{\partial \delta_i}{\partial x_i}|\leq C\lambda_i,
\end{equation*}
\begin{equation*}
|\frac{\partial \delta_i}{\partial \lambda_i}|\leq \frac{C}{\lambda_i}.
\end{equation*}
\end{lemma}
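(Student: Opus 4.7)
The plan is to establish the five estimates in order of increasing delicacy, all starting from the explicit formula
$U_{y,\lambda}(x) = C_0 \bigl(\lambda/(1+\lambda^2|x-y|^2)\bigr)^{(N-2\gamma)/2}$ and the scaling representation $\delta(y,\lambda)(x) = \lambda^{(N-2\gamma)/2} w(\lambda(x-y))$, where $w(\xi) = C_0(1+|\xi|^2)^{-(N-2\gamma)/2}$. The two derivative bounds are pure calculus, while the three interaction integrals are the classical bubble-interaction estimates of Bahri–Coron type, adapted to the fractional kernel.

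For the last two bounds, I would differentiate the scaling representation. In $\lambda$,
$\partial_\lambda \delta(y,\lambda) = \frac{1}{\lambda}\bigl[\tfrac{N-2\gamma}{2}\lambda^{(N-2\gamma)/2} w(\lambda(x-y)) + \lambda^{(N-2\gamma)/2} \nabla w(\lambda(x-y))\cdot \lambda(x-y)\bigr]$,
and the bracket is a fixed (scale-invariant) combination of $\delta$ and a translate of $\nabla w$ rescaled at rate $\lambda$; its $\dot H^\gamma$-norm is an absolute constant, giving $\|\partial_\lambda\delta\|\le C/\lambda$. In $y$, $\partial_{y_l}\delta = -\lambda \cdot \lambda^{(N-2\gamma)/2}\partial_l w(\lambda(x-y))$, whose $\dot H^\gamma$-norm is exactly $\lambda$ times $\|\partial_l w\|_{\dot H^\gamma}$, giving the bound $C\lambda$.

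For the first integral, change variables $x = y^i + \xi/\lambda_i$. The integrand becomes $C_0^{2^*_\gamma}(1+|\xi|^2)^{-(N+2\gamma)/2} \bigl(\lambda_j/(1+\lambda_j^2|y^i-y^j+\xi/\lambda_i|^2)\bigr)^{(N-2\gamma)/2}\lambda_i^{-(N-2\gamma)/2}$. Since $|y^i-y^j|$ is bounded away from zero and $\lambda_j\to\infty$, the second factor equals $\lambda_j^{-(N-2\gamma)/2}|y^i-y^j|^{-(N-2\gamma)}(1+O(|\xi|/\lambda_i) + O(\lambda_j^{-2}))$ for $|\xi|\le \lambda_i/2$. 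Pulling this constant outside and integrating against $(1+|\xi|^2)^{-(N+2\gamma)/2}$ yields the leading term $C_0^{2^*_\gamma}C_1\varepsilon_{ij}$; the contribution of $|\xi|\ge \lambda_i/2$, together with the correction from the expansion, is shown to be $O(\varepsilon_{ij}^{N/(N-2\gamma)})$ using the sharp decay $(1+|\xi|^2)^{-(N+2\gamma)/2}$.

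For the remaining two integrals I would use the dichotomy $\mathbb R^N = \Omega_i\cup\Omega_j$ with $\Omega_i=\{|x-y^i|\le |x-y^j|\}$. On $\Omega_i$, rescale around $y^i$ and keep the $U_{y^j,\lambda_j}^\beta$ factor (nearly constant on the small region where it matters); on $\Omega_j$, symmetric treatment. In the subcritical case $\alpha,\beta>1$ with $\alpha+\beta=2^*_\gamma$, the integrability of $(1+|\xi|^2)^{-\alpha(N-2\gamma)/2}$ on the region of size $\sim \lambda_i|y^i-y^j|$ produces the factor $(\log \varepsilon_{ij}^{-1})^{(N-2\gamma)\theta/N}$, with $\theta=\inf(\alpha,\beta)$; in the critical case $\alpha=\beta=N/(N-2\gamma)$ both sides contribute logarithmically, yielding the extra $\log\varepsilon_{ij}^{-1}$ factor. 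The main obstacle is purely bookkeeping: tracking the exponents on $\varepsilon_{ij}$ and on the logarithm so that the borderline case where the rescaled integrand behaves like $(1+|\xi|^2)^{-N/2}$ at infinity (which is precisely where the $\log$ appears) is identified and quantified correctly. The method itself is entirely standard once the scaling is in place.
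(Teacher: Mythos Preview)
Your approach is correct and follows the same classical Bahri--Coron/Rey template that the paper invokes. The one substantive difference is in the first interaction integral: the paper proves it in full generality by splitting into three cases according to which of $\lambda_i/\lambda_j$, $\lambda_j/\lambda_i$, $\lambda_i\lambda_j|y^i-y^j|^2$ dominates in $\varepsilon_{ij}^{-2/(N-2\gamma)}$, handling each via a Taylor expansion and a near/far decomposition. You instead assume from the outset that $|y^i-y^j|$ is bounded away from zero and both $\lambda$'s are large, which bypasses the case analysis; this is the only regime actually used in the paper (since $(y,\lambda)\in D_\mu$), so nothing is lost for the application, but the lemma as stated is more general than what your argument covers. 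For the other two interaction integrals the paper simply defers to Bahri and Rey, and your $\Omega_i\cup\Omega_j$ dichotomy is precisely the argument found there. One minor point: the derivative bounds in the statement are meant as pointwise bounds relative to $\delta_i$ (i.e.\ $|\partial_{\lambda_i}\delta_i|\le (C/\lambda_i)\delta_i$, from the identity $\partial_{\lambda_i}\delta_i = \tfrac{N-2\gamma}{2\lambda_i}\tfrac{1-\lambda_i^2|x-y^i|^2}{1+\lambda_i^2|x-y^i|^2}\delta_i$), not $\dot H^\gamma$-norm bounds; your norm interpretation is also true, but the pointwise relative version is what the subsequent lemmas actually use.
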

\begin{proof}
The proof is very similar to that of \cite{BahriACriticalPIVC1988} and \cite{ReyOlivier1990}, so we only prove the first estimate.
Set
\begin{equation*}
\delta_i(x) = C_0\left(\frac{\lambda_i}{1 + \lambda_i^2 |x - x_i|^2}\right)^{\frac{N - 2\gamma}{2}}.
\end{equation*}
\begin{equation*}
I = \displaystyle\int_{\mathbb{R}^N}\delta_i^{\frac{N + 2\gamma}{N - 2\gamma}}\delta_jdx.
\end{equation*}
Then it holds that
\begin{eqnarray*}
I &=& C_0^{\frac{2N}{N - 2\gamma}}\displaystyle\int_{\mathbb{R}^N}\frac{1}{(1 + |x|^2)^{\frac{N + 2\gamma}{2}}(\frac{\lambda_j}{\lambda_i} + |\sqrt{\frac{\lambda_i}{\lambda_j}} - \sqrt{\lambda_i\lambda_j}d_{ij}|^2)^{\frac{N - 2\gamma}{2}}}dx \\
&=& C_0^{\frac{2N}{N - 2\gamma}}\displaystyle\int_{\mathbb{R}^N}\frac{1}{(1 + |x|^2)^{\frac{N + 2\gamma}{2}}(\frac{\lambda_i}{\lambda_j} + |\sqrt{\frac{\lambda_j}{\lambda_i}} + \sqrt{\lambda_i\lambda_j}d_{ij}|^2)^{\frac{N - 2\gamma}{2}}}dx.
\end{eqnarray*}
First, we assume that
$$
\mu = \max\{\lambda_i/\lambda_j, \lambda_j/\lambda_i, \lambda_i\lambda_j|x_i - x_j|^2\} = \lambda_i/\lambda_j.
$$
By Taylor expansion, it holds that
\begin{equation*}
\frac{\lambda_i}{\lambda_j} + |\sqrt{\frac{\lambda_j}{\lambda_i}} + \sqrt{\lambda_i\lambda_j}d_{ij}|^2 = \left(\frac{\lambda_i}{\lambda_j} + \lambda_i\lambda_j|d_{ij}|^2\right) \times \left\{1 + \frac{\frac{\lambda_j}{\lambda_i}|x|^2 + 2\lambda_j x\cdot d_{ij}}{\frac{\lambda_i}{\lambda_j} + \lambda_i\lambda_j|d_{ij}|^2}\right\}.
\end{equation*}
\begin{eqnarray*}
&&\left\{\frac{1}{\frac{\lambda_i}{\lambda_j} + |\sqrt{\frac{\lambda_j}{\lambda_i}} + \sqrt{\lambda_i\lambda_j}d_{ij}|^2}\right\}^{\frac{N - 2\gamma}{2}} = \left(\frac{1}{\frac{\lambda_i}{\lambda_j} + \lambda_i\lambda_j|d_{ij}|^2}\right)^{\frac{N - 2\gamma}{2}}\\ && \times \left\{1 - (N - 2\gamma)\frac{\lambda_j x\cdot d_{ij}}{\frac{\lambda_i}{\lambda_j} + \lambda_i\lambda_j|d_{ij}|^2} +  O\left(\frac{1}{\lambda_i\lambda_j|d_{ij}|^2}|x|^2\right)\right\}.
\end{eqnarray*}
Then we have
\begin{eqnarray*}
&& \displaystyle\int_{|x|\leq \frac{\sqrt{\mu}}{10}}\frac{1}{(1 + |x|^2)^{\frac{N + 2\gamma}{2}}(\frac{\lambda_i}{\lambda_j} + |\sqrt{\frac{\lambda_j}{\lambda_i}} + \sqrt{\lambda_i\lambda_j}d_{ij}|^2)^{\frac{N - 2\gamma}{2}}}dx\\ &&= \left(\frac{1}{\frac{\lambda_i}{\lambda_j} + \lambda_i\lambda_j|d_{ij}|^2}\right)^{\frac{N - 2\gamma}{2}}\left(C_1 + O\left(\frac{1}{\mu^{\gamma}}\right)\right).
\end{eqnarray*}
Moreover, by easy computations, we have
\begin{equation*}
\displaystyle\int_{|x|\geq \frac{\sqrt{\mu}}{10}}\frac{1}{(1 + |x|^2)^{\frac{N + 2\gamma}{2}}}dx = O\left(\left(\frac{1}{\mu}\right)^{\gamma}\right),
\end{equation*}
\begin{equation*}
\displaystyle\int_{|x|\leq \frac{\sqrt{\mu}}{10}}\frac{x}{(1 + |x|^2)^{\frac{N + 2\gamma}{2}}}dx = 0,
\end{equation*}
\begin{equation*}
\displaystyle\int_{|x|\leq \frac{\sqrt{\mu}}{10}}\frac{1}{(1 + |x|^2)^{\frac{N + 2\gamma}{2}}}dx = C_1 + O\left(\left(\frac{1}{\mu}\right)^{\gamma}\right),
\end{equation*}
\begin{equation*}
\displaystyle\int_{|x|\leq \frac{\sqrt{\mu}}{10}}\frac{x^2}{(1 + |x|^2)^{\frac{N + 2\gamma}{2}}}dx = \mu^{1 -\gamma}.
\end{equation*}
Hence it holds that
\begin{eqnarray*}
&&\displaystyle\int_{|x|\leq \frac{\sqrt{\mu}}{10}}\frac{1}{(1 + |x|^2)^{\frac{N + 2\gamma}{2}}(\frac{\lambda_i}{\lambda_j} + |\sqrt{\frac{\lambda_j}{\lambda_i}} + \sqrt{\lambda_i\lambda_j}d_{ij}|^2)^{\frac{N - 2\gamma}{2}}}dx\\ && = \left(\frac{1}{\frac{\lambda_i}{\lambda_j} + \lambda_i\lambda_j|d_{ij}|^2}\right)^{\frac{N - 2\gamma}{2}}\left(C_1 + O\left(\frac{1}{\mu^{\gamma}}\right)\right)\\
&& = C_1 \varepsilon_{ij} + O(\varepsilon_{ij}^{\frac{N}{N - 2\gamma}}).
\end{eqnarray*}

For the case $\mu = \lambda_j/\lambda_i$, the proof is similar. So we are left with the case
$$
\mu = \lambda_i\lambda_j|x_i - x_j|^2.
$$

Let
\begin{equation*}
B_1 = \{x\in \mathbb{R}^N| |x + \lambda_i d_{ij}|\leq 1/10\lambda_i |d_{ij}|\},
\end{equation*}
\begin{equation*}
B_2 = \{x\in \mathbb{R}^N| |x|\leq \frac{\sqrt{\mu}}{10}\}
\end{equation*}
and
\begin{equation*}
L(x) = \frac{1}{(1 + |x|^2)^{\frac{N + 2\gamma}{2}}(\frac{\lambda_i}{\lambda_j} + |\sqrt{\frac{\lambda_j}{\lambda_i}}x + \sqrt{\lambda_i\lambda_j}d_{ij}|^2)^{\frac{N - 2\gamma}{2}}}.
\end{equation*}
Then
\begin{equation*}
\displaystyle\int_{(B_1 \cup B_2)^c}L(x)dx \leq \frac{C}{\mu^{\frac{N - 2\gamma}{2}}}\int_{\sqrt{\mu}}^{+\infty}\frac{r^{N - 1}dr}{(1 + r^2)^{\frac{N + 2\gamma}{2}}} = O(\varepsilon_{ij}^{\frac{N}{N - 2\gamma}}).
\end{equation*}

On $B_1$, we have
\begin{equation*}
|x|\geq 9/10 \lambda_i |d_{ij}|.
\end{equation*}
Thus
\begin{eqnarray*}
\displaystyle\int_{B_1}L(x)dx &&\leq \frac{C}{\lambda_i^{N + 2\gamma}|d_{ij}|^{N + 2\gamma}}\left(\frac{\lambda_j}{\lambda_i}\right)^{\frac{N - 2\gamma}{2}}\int_{0}^{\lambda_i|d_{ij}|}\frac{r^{N - 1}dr}{(1 + \frac{\lambda_j^2}{\lambda_i^2}r^2)^{\frac{N - 2\gamma}{2}}}\\
&&\leq \frac{C}{\lambda_i^{N + 2\gamma}|d_{ij}|^{N + 2\gamma}}\left(\frac{\lambda_i}{\lambda_j}\right)^{\frac{N + 2\gamma}{2}}\left(\lambda_i\lambda_j|d_{ij}|^2\right)^{\gamma} = O(\varepsilon_{ij}^{\frac{N}{N - 2\gamma}}).
\end{eqnarray*}
On the other hand, we have
\begin{equation*}
S = \displaystyle\int_{\mathbb{R}^N}\delta_i^{\frac{N}{N - 2\gamma}}\delta_j^{\frac{N}{N - 2\gamma}}dx = C_0^{\frac{2N}{N - 2\gamma}}\displaystyle\int_{\mathbb{R}^N}\frac{(\lambda_i\lambda_j)^{N/2}}{(1 + \lambda_i^2 |x - x_i|^2)^{\frac{N}{2}}(1 + \lambda_j^2 |x - x_j|^2)^{\frac{N}{2}}}dx
\end{equation*}
Let
\begin{equation*}
a_{ij} = \frac{x_j - x_i}{2}, \quad z = x- \frac{x_i + x_j}{2}.
\end{equation*}
Then we have
\begin{equation*}
S = C_0^{\frac{2N}{N - 2\gamma}}\frac{1}{(\lambda_i\lambda_j)^{N/2}}\displaystyle\int_{\mathbb{R}^N}\frac{1}{(\frac{1}{\lambda_i^2} + |z + a_{ij}|^2)^{\frac{N}{2}}(\frac{1}{\lambda_j^2} + |z - a_{ij}|^2)^{\frac{N}{2}}}dx.
\end{equation*}
Combining the above computations, we have the desired estimates. This completes the proof.
\end{proof}
\begin{lemma}\label{l:lemmaB2}
For any $(y, \lambda)\in D_\mu$ and $v\in E^2_{y, \lambda}$, we have, for some $\tau > 0$,
\begin{equation*}
\displaystyle\int_{\mathbb{R}^n}K\left(\sum_{j = 1}^2\alpha_{j} U_{y^j, \lambda_{j}}\right)^{2^*_\gamma - 1}vdx = O\left(\displaystyle\sum_{j = 1}^2\left(\frac{1}{\lambda_j^{\beta_j}} + |y^j - z^j|^{\beta_j}\right) + \varepsilon_{12}^{\frac{1}{2} + \tau}\right)\|v\|_{\dot{H}^\gamma(\mathbb{R}^N)}.
\end{equation*}
\end{lemma}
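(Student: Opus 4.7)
The plan is to exploit both the orthogonality $v\in E^2_{y,\lambda}$ and the local expansion of $K$ to absorb the dangerous ``non-decaying'' part of $K$ when tested against $v$. First, decompose
\[
\Bigl(\sum_{j=1}^2\alpha_j U_{y^j,\lambda_j}\Bigr)^{2^*_\gamma-1} = \sum_{j=1}^2 \alpha_j^{2^*_\gamma-1} U_{y^j,\lambda_j}^{2^*_\gamma-1} + R,
\]
where $R$ is a finite sum of ``cross'' products of $U_{y^1,\lambda_1}$ and $U_{y^2,\lambda_2}$, each raised to strictly positive exponents summing to $2^*_\gamma-1$. Applying H\"older's inequality together with the Sobolev embedding $\dot H^\gamma(\mathbb{R}^N)\hookrightarrow L^{2^*_\gamma}(\mathbb{R}^N)$ and the bubble interaction estimates from the first lemma of this appendix, one obtains $\bigl|\int K\, R\, v\,dx\bigr|\le C\varepsilon_{12}^{1/2+\tau}\|v\|_{\dot H^\gamma(\mathbb{R}^N)}$ for some $\tau>0$.

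Next, for each diagonal contribution $\int K U_{y^j,\lambda_j}^{2^*_\gamma-1} v\,dx$, the key point is that $v\in E^2_{y,\lambda}$ forces $\langle U_{y^j,\lambda_j}, v\rangle_{\dot H^\gamma}=0$. Since $(-\Delta)^\gamma U_{y^j,\lambda_j}$ is a constant multiple of $U_{y^j,\lambda_j}^{2^*_\gamma-1}$, this gives $\int U_{y^j,\lambda_j}^{2^*_\gamma-1} v\,dx=0$, so one may replace $K$ by $K-K(z^j)$ without changing the value of the integral.

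The resulting integral is then split into the small ball $B_\delta(z^j)$ and its complement. On the complement, $|K-K(z^j)|$ is uniformly bounded while $\int_{\{|x-z^j|\ge\delta\}} U_{y^j,\lambda_j}^{2^*_\gamma}\,dx = O(\lambda_j^{-N})$ via the rescaling $\xi=\lambda_j(x-y^j)$, producing an error comfortably absorbed by $\lambda_j^{-\beta_j}$. On $B_\delta(z^j)$, the local expansion of $K$ yields $|K(x)-K(z^j)|\lesssim |x-z^j|^{\beta_j}$ (with the $O(|x-z^j|^{\beta_j+\sigma})$ term absorbed by choosing $\delta$ small). Combining this with the triangle inequality $|x-z^j|^{\beta_j}\lesssim |y^j-z^j|^{\beta_j}+|x-y^j|^{\beta_j}$ and H\"older's inequality with conjugate exponents $(2^*_\gamma)'=2N/(N+2\gamma)$ and $2^*_\gamma$ reduces matters, after the change of variable $\xi=\lambda_j(x-y^j)$, to controlling the scale-invariant factor
\[
\Bigl(\int_{\mathbb{R}^N}\frac{|\xi|^{\beta_j(2^*_\gamma)'}}{(1+|\xi|^2)^N}\,d\xi\Bigr)^{1/(2^*_\gamma)'},
\]
which then gives the desired $O\!\bigl(\lambda_j^{-\beta_j}+|y^j-z^j|^{\beta_j}\bigr)\|v\|_{\dot H^\gamma(\mathbb{R}^N)}$ on $B_\delta(z^j)$.

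The main technical obstacle is the uniform convergence of this rescaled integral and the careful bookkeeping of the various remainders against the target $\lambda_j^{-\beta_j}+|y^j-z^j|^{\beta_j}+\varepsilon_{12}^{1/2+\tau}$; this relies on the interplay between $\beta_j$, $N$, $\gamma$ that is encoded in the hypothesis $\beta_j\in(1,N-2\gamma)$, together with the rapid polynomial decay of the bubble $U_{y^j,\lambda_j}$ away from $y^j$. The final step is to note that the definition of $\hat{\alpha}$ gives $\alpha_j^{2^*_\gamma-1}=1+O(\varepsilon)$, so replacing the $\alpha_j^{2^*_\gamma-1}$ factor by $1$ costs at most an additional $O(\varepsilon)$ already absorbed into the error.
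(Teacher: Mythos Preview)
Your proposal is correct and follows essentially the same route as the paper: decompose $(\sum_j \alpha_j U_{y^j,\lambda_j})^{2^*_\gamma-1}$ into diagonal and cross terms, control the cross terms by H\"older plus the bubble interaction estimates (yielding the $\varepsilon_{12}^{1/2+\tau}$), and for each diagonal term use the orthogonality $\int U_{y^j,\lambda_j}^{2^*_\gamma-1}v=0$ to replace $K$ by $K-K(z^j)$ and then the local expansion after rescaling. The paper carries this out in the same order but much more tersely---in particular it does not spell out the subtraction of $K(z^j)$ or the ball/complement split, and simply records the rescaled H\"older bound; your write-up is a more explicit version of the same argument (your closing remark about $\alpha_j^{2^*_\gamma-1}=1+O(\varepsilon)$ is harmless but unnecessary, since $\alpha_j=O(1)$ already suffices for the stated estimate).
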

\begin{proof}
First, we have
\begin{eqnarray*}
\displaystyle\int_{\mathbb{R}^n}K(y)U_{x, \lambda}^{2^*_\gamma - 1}vdy & = &O\left(\int_{\mathbb{R}^n}\left(|x - z_j|^{\beta_j} + \frac{|y|^{\beta_j}}{\lambda^{\beta_j}}\right)U^{2^*_\gamma - 1}|v(\frac{y}{\lambda} + x)|\lambda^{\frac{2\gamma - n}{2}}dy\right)\\
& = & O\left(|x - z_j|^{\beta_j} + \frac{1}{\lambda^{\beta_j}}\right)\|v\|_{\dot{H}^\gamma(\mathbb{R}^N)}.
\end{eqnarray*}
Then
\begin{eqnarray*}
&&\displaystyle\int_{\mathbb{R}^n}K\left(\sum_{j = 1}^2\alpha_{j} U_{y^j, \lambda_{j}}\right)^{2^*_\gamma - 1}vdx\\ && = \displaystyle\sum_{j = 1}^2\int_{\mathbb{R}^n}K\alpha_{j}^{2^*_\gamma - 1} U_{y^j, \lambda_{j}}^{2^*_\gamma - 1}vdx + O\left(\int_{\mathbb{R}^n}K(y)U_{y^1, \lambda_{1}}^{2^*_\gamma - 1}U_{y^2, \lambda_{2}}^{2^*_\gamma - 1}|v|dx\right)\\
&& = \displaystyle\sum_{j = 1}^2\int_{\mathbb{R}^n}K\alpha_{j}^{2^*_\gamma - 1} U_{y^j, \lambda_{j}}^{2^*_\gamma - 1}vdx + O\left(\varepsilon_{12}^{\frac{1}{2} + \tau}\right)\|v\|_{\dot{H}^\gamma(\mathbb{R}^N)}.
\end{eqnarray*}
Finally,
\begin{equation*}
\displaystyle\int_{\mathbb{R}^n}K\left(\sum_{j = 1}^2\alpha_{j} U_{y^j, \lambda_{j}}\right)^{2^*_\gamma - 1}vdx = \displaystyle\sum_{j = 1}^2\int_{\mathbb{R}^n}K\alpha_{j}^{2^*_\gamma - 1} U_{y^j, \lambda_{j}}^{2^*_\gamma - 1}vdx + O\left(\varepsilon_{12}\right)\|v\|_{\dot{H}^\gamma(\mathbb{R}^N)}.
\end{equation*}
\end{proof}
\begin{lemma}\label{l:lemmaB3}
For any $(y, \lambda)\in D_\mu$ and $v\in E^2_{y, \lambda}$, when $\mu$ is small, we have, for some $\tau > 0$,
\begin{equation*}
\displaystyle\int_{\mathbb{R}^n}\left(\sum_{j = 1}^2\alpha_{j} U_{y^j, \lambda_{j}}\right)^{2^*_\gamma - 1}vdx = O\left(\varepsilon_{12}^{\frac{1}{2} + \tau}\right)\|v\|_{\dot{H}^\gamma(\mathbb{R}^N)}.
\end{equation*}
\end{lemma}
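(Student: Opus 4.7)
The key idea is that the orthogonality condition built into $E^2_{y,\lambda}$ kills the diagonal contributions, so only the cross-terms remain, and these are controlled by the integral estimate for $\int U_i^{\alpha}U_j^{\beta}\,dx$ established in the previous lemma.

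First I would exploit the orthogonality. Since $U_{y^j,\lambda_j}$ solves $(-\Delta)^{\gamma}U_{y^j,\lambda_j}=U_{y^j,\lambda_j}^{2^*_\gamma-1}$, integration by parts gives
\begin{equation*}
\langle U_{y^j,\lambda_j},v\rangle_{\dot H^\gamma(\mathbb{R}^N)}
=\int_{\mathbb{R}^N}(-\Delta)^{\gamma/2}U_{y^j,\lambda_j}(-\Delta)^{\gamma/2}v\,dx
=\int_{\mathbb{R}^N}U_{y^j,\lambda_j}^{2^*_\gamma-1}v\,dx,
\end{equation*}
and by definition $v\in E^2_{y,\lambda}$ makes the left-hand side vanish for $j=1,2$. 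Hence
\begin{equation*}
\int_{\mathbb{R}^N}\Bigl(\sum_{j=1}^{2}\alpha_{j}U_{y^j,\lambda_{j}}\Bigr)^{2^*_\gamma-1}v\,dx
=\int_{\mathbb{R}^N}\mathcal{D}\,v\,dx,
\end{equation*}
where
\begin{equation*}
\mathcal{D}:=\Bigl(\sum_{j=1}^{2}\alpha_{j}U_{y^j,\lambda_{j}}\Bigr)^{2^*_\gamma-1}-\sum_{j=1}^{2}\alpha_{j}^{2^*_\gamma-1}U_{y^j,\lambda_{j}}^{2^*_\gamma-1}
\end{equation*}
is a pure cross-term; notice there is no contribution left along the exact profiles $U_{y^j,\lambda_j}$.

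Next I would bound $\mathcal{D}$ pointwise. For $p=2^*_\gamma-1\ge 1$ one has the elementary inequality
\begin{equation*}
|(a+b)^{p}-a^{p}-b^{p}|\le C_{p}\bigl(a^{p-1}b+ab^{p-1}\bigr),\qquad a,b\ge 0,
\end{equation*}
so, using that $\alpha_j$ stays bounded,
\begin{equation*}
|\mathcal{D}|\le C\bigl(U_{y^1,\lambda_1}^{2^*_\gamma-2}U_{y^2,\lambda_2}+U_{y^1,\lambda_1}U_{y^2,\lambda_2}^{2^*_\gamma-2}\bigr).
\end{equation*}
Applying H\"older with the conjugate pair $\bigl(2^*_\gamma,\,q\bigr)$, $q=2^*_\gamma/(2^*_\gamma-1)=2N/(N+2\gamma)$, together with the fractional Sobolev embedding \eqref{e:embedding},
\begin{equation*}
\Bigl|\int_{\mathbb{R}^N}\mathcal{D}\,v\,dx\Bigr|
\le C\|\mathcal{D}\|_{L^{q}(\mathbb{R}^N)}\|v\|_{\dot H^\gamma(\mathbb{R}^N)}.
\end{equation*}

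The remaining task is to compute $\|U_{y^1,\lambda_1}^{2^*_\gamma-2}U_{y^2,\lambda_2}\|_{L^{q}}$ and the symmetric term. Raising to the power $q$ turns this into $\int U_{y^1,\lambda_1}^{(2^*_\gamma-2)q}U_{y^2,\lambda_2}^{q}\,dx$, and the exponents satisfy $(2^*_\gamma-2)q+q=2^*_\gamma$; so the previous lemma yields
\begin{equation*}
\int_{\mathbb{R}^N}U_{y^1,\lambda_1}^{(2^*_\gamma-2)q}U_{y^2,\lambda_2}^{q}\,dx
=O\Bigl(\varepsilon_{12}\bigl(\log\varepsilon_{12}^{-1}\bigr)^{\kappa}\Bigr)
\end{equation*}
for some $\kappa>0$ (with the symmetric bound for the other cross-term). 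Taking $q$-th roots gives $\varepsilon_{12}^{1/q}=\varepsilon_{12}^{1/2+\gamma/N}$ up to a logarithmic factor, and choosing any $0<\tau<\gamma/N$ absorbs the log. This yields the desired bound.

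The main technical worry is the hypothesis $\alpha,\beta>1$ in the previous lemma: the exponent $(2^*_\gamma-2)q=\frac{8N\gamma}{N^{2}-4\gamma^{2}}$ exceeds $1$ only in a restricted range of $N$. If $N$ is large enough that $(2^*_\gamma-2)q\le 1$, one needs to refine the pointwise bound on $\mathcal{D}$ by localising to the regions $\{|x-y^1|\le |x-y^2|\}$ and its complement. In the first region $U_{y^1,\lambda_1}\ge c\,U_{y^2,\lambda_2}$, so $\mathcal{D}$ is controlled by $C\,U_{y^1,\lambda_1}^{2^*_\gamma-2}U_{y^2,\lambda_2}$ there (and analogously on the other piece), and the corresponding region-restricted $L^{q}$ estimate can be performed by direct integration of the explicit profile, keeping the same $\varepsilon_{12}^{1/2+\tau}$ decay. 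This splitting is the only step beyond the clean orthogonality+H\"older argument described above.
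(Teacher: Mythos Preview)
Your proposal is correct and follows essentially the same route as the paper: split off the diagonal terms $\sum_j \alpha_j^{2^*_\gamma-1}\int U_{y^j,\lambda_j}^{2^*_\gamma-1}v\,dx$, kill them via the orthogonality $\langle U_{y^j,\lambda_j},v\rangle=0$ (using $(-\Delta)^\gamma U_{y^j,\lambda_j}=U_{y^j,\lambda_j}^{2^*_\gamma-1}$), and then control the cross-term by H\"older, Sobolev, and the integral estimate $\int U_1^{\alpha}U_2^{\beta}\,dx=O(\varepsilon_{12}(\log\varepsilon_{12}^{-1})^{\kappa})$. Your write-up is in fact more careful than the paper's, which records the computation in two lines and does not discuss the exponent-range issue you flag at the end.
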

\begin{proof}
\begin{eqnarray*}
&&\displaystyle\int_{\mathbb{R}^n}\left(\sum_{j = 1}^2\alpha_{j} U_{y^j, \lambda_{j}}\right)^{2^*_\gamma - 1}vdx\\ && = \displaystyle\sum_{j = 1}^2\int_{\mathbb{R}^n}\alpha_{j}^{2^*_\gamma - 1} U_{y^j, \lambda_{j}}^{2^*_\gamma - 1}vdx + O\left(\int_{\mathbb{R}^n}U_{y^1, \lambda_{1}}^{2^*_\gamma - 1}U_{y^2, \lambda_{2}}^{2^*_\gamma - 1}|v|dx\right)\\
&& = \displaystyle\sum_{j = 1}^2\int_{\mathbb{R}^n}\alpha_{j}^{2^*_\gamma - 1} U_{y^j, \lambda_{j}}^{2^*_\gamma - 1}vdx + O\left(\varepsilon_{12}^{\frac{1}{2} + \tau}\right)\|v\|_{\dot{H}^\gamma(\mathbb{R}^N)}\\
&&
= O\left(\varepsilon_{12}^{\frac{1}{2} + \tau}\right)\|v\|_{\dot{H}^\gamma(\mathbb{R}^N)}.
\end{eqnarray*}
\begin{eqnarray*}
&&\displaystyle\int_{\mathbb{R}^n}\left(\sum_{j = 1}^2\alpha_{j} U_{y^j, \lambda_{j}}\right)^{2^*_\gamma - 1}vdx\\ &&= \displaystyle\sum_{j = 1}^2\int_{\mathbb{R}^n}\alpha_{j}^{2^*_\gamma - 1} U_{y^j, \lambda_{j}}^{2^*_\gamma - 1}vdx + O\left(\varepsilon_{12}\right)\|v\|_{\dot{H}^\gamma(\mathbb{R}^N)} \\ &&= O\left(\varepsilon_{12}\right)\|v\|_{\dot{H}^\gamma(\mathbb{R}^N)}.
\end{eqnarray*}
\end{proof}
\begin{lemma}\label{l:lemmaB4}
Suppose $(y, \lambda)\in D_\mu$, if $\mu$ is small, then we have
\begin{eqnarray*}
&&\left\langle \sum_{j = 1}^2 \hat{\alpha}_j U_{y^j, \lambda_j}, U_{y^k, \lambda_k}\right\rangle - \int_{\mathbb{R}^N}(1 + \varepsilon K)\left(\sum_{j = 1}^2 \hat{\alpha}_j U_{y^j, \lambda_j}\right)^{2^*_\gamma - 1}U_{y^k, \lambda_k}\\
&& = O\left(\varepsilon\left(\frac{1}{\lambda_j^{\beta_k}} + |y^j - z^j|^{\beta_k}\right) + \varepsilon_{12}^{\frac{1}{2} + \tau}\right).
\end{eqnarray*}
\end{lemma}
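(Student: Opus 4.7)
\textbf{Proof plan for Lemma \ref{l:lemmaB4}.} The plan is to convert the inner product into an integral via the Euler--Lagrange equation for the bubbles, split the resulting expression into a diagonal part and a nonlinear interaction remainder, and then estimate each piece using the local expansion of $K$ at $z^j$ together with the two--bubble interaction estimates already gathered in Appendix C.

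First, since $(-\Delta)^\gamma U_{y^j,\lambda_j}=U_{y^j,\lambda_j}^{2^*_\gamma-1}$, one has $\langle U_{y^j,\lambda_j},U_{y^k,\lambda_k}\rangle=\int U_{y^j,\lambda_j}^{2^*_\gamma-1}U_{y^k,\lambda_k}\,dx$. After regrouping, the quantity to estimate becomes
$$
\sum_{j=1}^{2}\int\bigl[\hat{\alpha}_j-(1+\varepsilon K(x))\hat{\alpha}_j^{2^*_\gamma-1}\bigr]U_{y^j,\lambda_j}^{2^*_\gamma-1}U_{y^k,\lambda_k}\,dx\;-\;\int(1+\varepsilon K(x))\,\mathcal{R}\,U_{y^k,\lambda_k}\,dx,
$$
where $\mathcal{R}=\bigl(\sum_j\hat{\alpha}_j U_{y^j,\lambda_j}\bigr)^{2^*_\gamma-1}-\sum_j\hat{\alpha}_j^{2^*_\gamma-1}U_{y^j,\lambda_j}^{2^*_\gamma-1}$ is the nonlinear interaction remainder.

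Second, for the diagonal contribution I split $(1+\varepsilon K(x))=(1+\varepsilon K(z^j))+\varepsilon(K(x)-K(z^j))$. By the choice of $\hat{\alpha}_j$, the scalar $\hat{\alpha}_j-(1+\varepsilon K(z^j))\hat{\alpha}_j^{2^*_\gamma-1}$ is $O(\varepsilon)$ (and would vanish identically if $\hat{\alpha}_j$ were normalized so that $\hat{\alpha}_j U_{y^j,\lambda_j}$ solved $(-\Delta)^\gamma u=(1+\varepsilon K(z^j))u^{2^*_\gamma-1}$). For the $j=k$ term the residual piece $-\varepsilon\hat{\alpha}_k^{2^*_\gamma-1}\int(K(x)-K(z^k))U_{y^k,\lambda_k}^{2^*_\gamma}\,dx$ is then controlled by inserting the local expansion $K(x)-K(z^k)=\sum_i a_i^k|x_i-z^k_i|^{\beta_k}+O(|x-z^k|^{\beta_k+\sigma})$, performing the change of variables $x=y^k+y/\lambda_k$, and using $|x-z^k|^{\beta_k}\leq C(|y^k-z^k|^{\beta_k}+|y|^{\beta_k}/\lambda_k^{\beta_k})$; since $\beta_k<N-2\gamma$ the resulting $y$--integrals $\int|y|^{\beta_k}(1+|y|^2)^{-N}\,dy$ are absolutely convergent and give the desired $O\bigl(\varepsilon(|y^k-z^k|^{\beta_k}+\lambda_k^{-\beta_k})\bigr)$ bound. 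For $j\neq k$ the same $O(\varepsilon)$ prefactor multiplies $\int U_{y^j,\lambda_j}^{2^*_\gamma-1}U_{y^k,\lambda_k}\,dx=O(\varepsilon_{12})$ from the first lemma of Appendix C, which sits comfortably inside the $\varepsilon_{12}^{1/2+\tau}$ allowance.

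Third, the remainder term $\int(1+\varepsilon K)\mathcal{R}\,U_{y^k,\lambda_k}\,dx$ is estimated by an elementary pointwise inequality: with $p=2^*_\gamma-1$, one has $|\mathcal{R}|\lesssim U_{y^1,\lambda_1}^{p-1}U_{y^2,\lambda_2}+U_{y^2,\lambda_2}^{p-1}U_{y^1,\lambda_1}$ when $p\geq 2$, and an analogous symmetric bound involving $\min(U_{y^1,\lambda_1},U_{y^2,\lambda_2})$ when $1<p<2$. Splitting $\mathbb{R}^N$ into the region where each bubble dominates, absorbing the extra factor $U_{y^k,\lambda_k}$ into the dominant bubble, and applying the two--bubble estimates of Appendix C reduce each resulting triple integral to a bound of the form $\varepsilon_{12}^{a}(\log\varepsilon_{12}^{-1})^b$ with $a>1/2$, which is absorbed into $\varepsilon_{12}^{1/2+\tau}$ upon choosing $\tau>0$ sufficiently small. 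This last step is the main obstacle: the triple products $\int U_{y^1,\lambda_1}^{s_1}U_{y^2,\lambda_2}^{s_2}U_{y^k,\lambda_k}\,dx$ do not reduce verbatim to the two--bubble integrals of Appendix C, so one must carefully localize according to the dominant bubble to ensure that the exponent of $\varepsilon_{12}$ after localization stays strictly above $1/2$.
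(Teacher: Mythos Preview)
Your decomposition into diagonal pieces and a nonlinear interaction remainder is exactly the paper's argument, just written out in more detail; the paper lumps the $j\neq k$ diagonal contributions together with your $\mathcal R$ into a single $O\bigl(\int U_{y^1,\lambda_1}^{\alpha}U_{y^2,\lambda_2}^{\beta}\bigr)$ term and then quotes the Appendix~C two--bubble estimate.

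One point must be tightened. Your claim that $\hat\alpha_j-(1+\varepsilon K(z^j))\hat\alpha_j^{2^*_\gamma-1}=O(\varepsilon)$ is not enough: with only $O(\varepsilon)$, the $j=k$ term leaves a residue $O(\varepsilon)\int U_{y^k,\lambda_k}^{2^*_\gamma}=O(\varepsilon)$, which is \emph{not} dominated by $O\bigl(\varepsilon(\lambda_k^{-\beta_k}+|y^k-z^k|^{\beta_k})+\varepsilon_{12}^{1/2+\tau}\bigr)$. The scalar must, and does, vanish identically --- this is precisely the defining property of $\hat\alpha_j$ (the exponent $-\tfrac{N-2\gamma}{4}$ in the paper's definition of $\hat\alpha$ is a misprint for $-\tfrac{N-2\gamma}{4\gamma}$, so that $\hat\alpha_j^{2^*_\gamma-2}=(1+\varepsilon K(z^j))^{-1}$; the exponent $\tfrac{N+2\gamma}{4\gamma}$ appearing in the paper's own proof confirms this). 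The paper uses the exact cancellation to rewrite the diagonal as $\tfrac{\hat\alpha_k}{1+\varepsilon K(z^k)}\int\varepsilon(K(z^k)-K(x))U_{y^k,\lambda_k}^{2^*_\gamma}$, after which your change--of--variables argument goes through verbatim. So you should commit to the parenthetical observation rather than hedge.

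On the remainder you are working harder than necessary. After multiplying $|\mathcal R|$ by $U_{y^k,\lambda_k}$ and using your pointwise bounds (for $1<p<2$ choose the branch of the $\min$ with $U_{y^k,\lambda_k}^{p-1}$), every resulting integral has the form $\int U_{y^1,\lambda_1}^{\alpha}U_{y^2,\lambda_2}^{\beta}$ with $\alpha+\beta=2^*_\gamma$ and both exponents $\ge 1$. The Appendix~C estimates give these as $O(\varepsilon_{12})$ or $O\bigl(\varepsilon_{12}(\log\varepsilon_{12}^{-1})^{c}\bigr)$, so the effective $\varepsilon_{12}$--exponent is essentially $1$, not merely ``strictly above $1/2$,'' and no localization according to the dominant bubble is needed.
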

\begin{proof}
\begin{eqnarray*}
&&\left\langle \sum_{j = 1}^2 \hat{\alpha}_j U_{y^j, \lambda_j}, U_{y^k, \lambda_k}\right\rangle - \int_{\mathbb{R}^N}(1 + \varepsilon K)\left(\sum_{j = 1}^2 \hat{\alpha}_j U_{y^j, \lambda_j}\right)^{2^*_\gamma - 1}U_{y^k, \lambda_k}\\
&& = \hat{\alpha}_k\left(\int_{\mathbb{R}^N}U^{2^*_\gamma}_{y^k, \lambda_k} - \int_{\mathbb{R}^N}\frac{1 + \varepsilon K}{1 + \varepsilon K(z^k)}U^{2^*_\gamma}_{y^k, \lambda_k}\right) + O\left(\int_{\mathbb{R}^N}U_{y^1, \lambda_1}^{\alpha}U_{y^2, \lambda_2}^{\beta}\right)
\end{eqnarray*}
\begin{eqnarray*}
&& = \frac{1}{(1 + \varepsilon K)^{\frac{N + 2\gamma}{4\gamma}}}\int_{\mathbb{R}^N}\varepsilon(K(z^k) - K(x))U^{2^*_\gamma}_{y^k, \lambda_k} + O(\varepsilon_{12}^{\frac{1}{2}+\tau})\\
&& = O(\varepsilon)\int_{\mathbb{R}^N}|(K(z^k) - K(\frac{y}{\lambda_k} + y_k))|\frac{1}{( 1 + |y|^2)^{N}} + O(\varepsilon_{12}^{\frac{1}{2}+\tau})\\
&& = O\left(\varepsilon\left(\frac{1}{\lambda_j^{\beta_k}} + |y^j - z^j|^{\beta_k}\right) + \varepsilon_{12}^{\frac{1}{2} + \tau}\right).
\end{eqnarray*}
\end{proof}
\begin{lemma}
Suppose $(y, \lambda)\in D_\mu$, if $\mu$ and $\varepsilon$ are small, then we have
\begin{eqnarray*}
&&\langle U_{y^l, \lambda_l}, U_{y^k, \lambda_k}\rangle - (2^*_\gamma - 1)\int_{\mathbb{R}^N}(1 + \varepsilon K)\left(\displaystyle\sum_{j = 1}^2 \alpha_j U_{y^j, \lambda_j}\right)^{2^*_\gamma - 2} U_{y^l, \lambda_l}, U_{y^k, \lambda_k}\\
&& = \begin{cases} (1 - (2^*_\gamma - 1)\alpha_k^{2^*_\gamma - 2})A + O(\varepsilon_{12}^r) + O(\varepsilon), \quad\text{ if } k = l = 1, 2,\\
O(\varepsilon_{12}^r) + O(\varepsilon), \quad \text{ if } k\neq l, k, l = 1, 2
\end{cases}
\end{eqnarray*}
for some $r > 0$, $A > 0$.
\end{lemma}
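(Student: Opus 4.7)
The plan is to exploit the Euler--Lagrange identity $(-\Delta)^\gamma U_{y^k,\lambda_k} = U_{y^k,\lambda_k}^{2^*_\gamma - 1}$, which converts the inner product into an integral of the same type appearing in the second term, and then isolate the leading contributions by splitting the domain near each bubble and using the bubble-interaction estimates from the first lemma of Appendix~C.

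\textbf{Step 1.} Write $\langle U_{y^l,\lambda_l},U_{y^k,\lambda_k}\rangle = \int_{\mathbb{R}^N} U_{y^k,\lambda_k}^{2^*_\gamma - 1} U_{y^l,\lambda_l}\,dx$. When $k=l$ this equals $A := \int U^{2^*_\gamma}_{0,1}\,dx$ (a constant, by translation/dilation invariance of $U_{y,\lambda}$). When $k\neq l$ the first lemma of Appendix~C gives it is $= C_0^{2N/(N-2\gamma)}C_1\varepsilon_{12} + O(\varepsilon_{12}^{N/(N-2\gamma)})$, in particular $O(\varepsilon_{12})$.

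\textbf{Step 2.} Separate the $\varepsilon K$--contribution in the second integral. Since $K\in L^\infty$,
\[
\varepsilon(2^*_\gamma-1)\int_{\mathbb{R}^N} K\Bigl(\sum_{j=1}^2\alpha_j U_{y^j,\lambda_j}\Bigr)^{2^*_\gamma-2}U_{y^l,\lambda_l}U_{y^k,\lambda_k}\,dx = O(\varepsilon),
\]
because the remaining integral is bounded: for $k=l$ it is comparable to $A$ and for $k\neq l$ to a mixed bubble integral which is $O(\varepsilon_{12})$ by the same lemma.

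\textbf{Step 3.} For the main term, decompose $\mathbb{R}^N = \Omega_1\cup\Omega_2$ where $\Omega_k = \{x:|x-y^k|\leq |y^1-y^2|/2\}$. On $\Omega_k$ the bubble $U_{y^k,\lambda_k}$ dominates, and I shall use the pointwise expansion
\[
\Bigl(\sum_{j=1}^2\alpha_j U_{y^j,\lambda_j}\Bigr)^{2^*_\gamma - 2} = \alpha_k^{2^*_\gamma-2}U_{y^k,\lambda_k}^{2^*_\gamma-2} + R_k(x),
\]
where $|R_k|\leq C(U_{y^k,\lambda_k}^{(2^*_\gamma-3)_+}U_{y^l,\lambda_l} + U_{y^l,\lambda_l}^{2^*_\gamma-2})$ after distinguishing the cases $2^*_\gamma - 2 = 4\gamma/(N-2\gamma)\ge 1$ and $< 1$. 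In each case the integrals against $U_{y^k,\lambda_k}U_{y^l,\lambda_l}$ fall under the mixed-bubble estimate $\int U_{y^k,\lambda_k}^\alpha U_{y^l,\lambda_l}^\beta\,dx = O(\varepsilon_{12}\,(\log\varepsilon_{12}^{-1})^{c})$ from Appendix~C, and hence are $O(\varepsilon_{12}^r)$ for some $r>0$ after absorbing the log.

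\textbf{Step 4.} Assemble. In the diagonal case $k=l$, the leading part of the second integral is
\[
(2^*_\gamma-1)\alpha_k^{2^*_\gamma-2}\int_{\Omega_k}U_{y^k,\lambda_k}^{2^*_\gamma}\,dx = (2^*_\gamma-1)\alpha_k^{2^*_\gamma-2}A + O(\varepsilon_{12}^r),
\]
since the integral over the complement $\Omega_l$ and the contribution from $R_k$ are controlled by bubble-interaction terms. Subtracting from $\langle U_{y^k,\lambda_k},U_{y^k,\lambda_k}\rangle = A$ produces $(1-(2^*_\gamma-1)\alpha_k^{2^*_\gamma-2})A + O(\varepsilon_{12}^r) + O(\varepsilon)$. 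In the off-diagonal case $k\neq l$ both terms are already $O(\varepsilon_{12})$ individually, so their difference is $O(\varepsilon_{12}^r) + O(\varepsilon)$ with $r=1$.

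The principal technical hurdle will be Step~3: the exponent $2^*_\gamma-2 = 4\gamma/(N-2\gamma)$ is $<1$ precisely when $N>6\gamma$, so a naive binomial expansion is unavailable in that regime and one must argue via subadditivity $(a+b)^p \leq a^p + b^p$ together with a refined splitting between the region where $U_{y^k,\lambda_k}\gg U_{y^l,\lambda_l}$ and the transition zone; verifying that the resulting error is strictly a positive power $\varepsilon_{12}^r$ (rather than only $\varepsilon_{12}|\log\varepsilon_{12}|^c$) requires absorbing the logarithmic factors by a small loss of exponent, which is exactly why the lemma is formulated with an unspecified $r>0$.
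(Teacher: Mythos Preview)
Your proposal is correct and follows essentially the same route as the paper: convert the inner product via the Euler--Lagrange relation, peel off the $\varepsilon K$ contribution as $O(\varepsilon)$, replace $(\sum_j\alpha_jU_{y^j,\lambda_j})^{2^*_\gamma-2}$ by a single bubble power, and estimate the remainder by mixed integrals $\int U_{y^1,\lambda_1}^{\alpha}U_{y^2,\lambda_2}^{\beta}$ from the first lemma of Appendix~C. The paper is simply more terse: it does not split the domain at all but writes the global pointwise error directly as a sum of the four mixed products $U_{1}^{2^*_\gamma-2}U_{2}^{2}$, $U_{1}^{2^*_\gamma-1}U_{2}$, $U_{2}^{2^*_\gamma-2}U_{1}^{2}$, $U_{2}^{2^*_\gamma-1}U_{1}$, each of which is $O(\varepsilon_{12}^{r})$.

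One small point to fix in your write-up: the sets $\Omega_k=\{|x-y^k|\le |y^1-y^2|/2\}$ are disjoint balls and do \emph{not} cover $\mathbb{R}^N$; either take the Voronoi regions $\Omega_k=\{|x-y^k|\le|x-y^l|\}$ instead, or add a remark that the contribution from $(\Omega_1\cup\Omega_2)^c$ is itself $O(\varepsilon_{12}^{r})$ since both bubbles are uniformly small there.
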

\begin{proof}
\begin{eqnarray*}
&&\langle U_{y^l, \lambda_l}, U_{y^k, \lambda_k}\rangle - (2^*_\gamma - 1)\int_{\mathbb{R}^N}(1 + \varepsilon K)\left(\displaystyle\sum_{j = 1}^2 \alpha_j U_{y^j, \lambda_j}\right)^{2^*_\gamma - 2} U_{y^l, \lambda_l}, U_{y^k, \lambda_k}\\
&& = \int_{\mathbb{R}^N}U_{y^l, \lambda_l}^{2^*_\gamma - 1}U_{y^k, \lambda_k} - (2^*_\gamma - 1)\alpha_l^{2^*_\gamma - 2}\int_{\mathbb{R}^N}U_{y^l, \lambda_l}^{2^*_\gamma - 1}U_{y^k, \lambda_k}\\
&&\,\,\,\,\,\, +\,\, O\left(\int_{\mathbb{R}^N}\left(U_{y^1, \lambda_1}^{2^*_\gamma - 2}U_{y^2, \lambda_2}^2 + U_{y^1, \lambda_1}^{2^*_\gamma - 1}U_{y^2, \lambda_2} + U_{y^2, \lambda_2}^{2^*_\gamma - 2}U_{y^1, \lambda_1}^2 + U_{y^2, \lambda_2}^{2^*_\gamma - 1}U_{y^1, \lambda_1}\right)\right) + O(\varepsilon)\\
&&= (1  - (2^*_\gamma - 1)\alpha_l^{2^*_\gamma - 2})\int_{\mathbb{R}^N}U_{y^l, \lambda_l}^{2^*_\gamma - 1}U_{y^k, \lambda_k} + O\left(\varepsilon_{12}^r\right) + O(\varepsilon).
\end{eqnarray*}
Let $A = \int_{\mathbb{R}^N}U_{y^k, \lambda_k}^{2^*_\gamma} = \int_{\mathbb{R}^N}U_{0, 1}^{2^*_\gamma}$. We get the conclusion.
\end{proof}
\begin{lemma}
For any $(y, \lambda)\in D_\mu$ and $v\in E^2_{y, \lambda}$, if $\mu$ and $\varepsilon$ are small, then
\begin{eqnarray*}
&&\displaystyle\int_{\mathbb{R}^n}(1 + \varepsilon K)\left(\sum_{j = 1}^2\alpha_{j} U_{y^j, \lambda_{j}}\right)^{2^*_\gamma - 2}U_{y^k, \lambda_{k}}vdx\\ &&\,\,\,\,\,\,= O\left(\varepsilon\displaystyle\sum_{j = 1}^2\left(\frac{1}{\lambda_j^{\beta_j}} + |y^j - z^j|^{\beta_j}\right) + \varepsilon_{12}^{\frac{1}{2} + \tau}\right)\|v\|_{\dot{H}^\gamma(\mathbb{R}^N)}.
\end{eqnarray*}
\end{lemma}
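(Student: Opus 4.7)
The plan is to reduce the integrand to a form where the orthogonality $v\in E^2_{y,\lambda}$ can be invoked, and to control the residual two-bubble cross terms via H\"older's inequality together with the bubble-product integral estimates proved at the start of this appendix. Expand
\[
\Bigl(\alpha_1 U_{y^1,\lambda_1}+\alpha_2 U_{y^2,\lambda_2}\Bigr)^{2^*_\gamma-2} = \alpha_k^{2^*_\gamma-2}\,U_{y^k,\lambda_k}^{2^*_\gamma-2} + R_k(x),
\]
where $R_k$ is pointwise bounded by a finite positive combination of two-bubble products $U_{y^1,\lambda_1}^{a}U_{y^2,\lambda_2}^{b}$ with $a+b=2^*_\gamma-2$ and $\min(a,b)>0$; when $p:=2^*_\gamma-2\ge 1$ one uses $(A+B)^p = A^p + B^p + O(A^{p-1}B+AB^{p-1})$, while for $p<1$ the concavity bound $(A+B)^p\le A^p+B^p$ gives the same structural conclusion. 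Multiplying by $U_{y^k,\lambda_k}$ and by $(1+\varepsilon K)$ produces a leading ``diagonal'' piece $\alpha_k^{2^*_\gamma-2}(1+\varepsilon K)\,U_{y^k,\lambda_k}^{2^*_\gamma-1}$ plus residual two-bubble products of total exponent $2^*_\gamma-1$.

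The diagonal piece contributes $\alpha_k^{2^*_\gamma-2}\int U_{y^k,\lambda_k}^{2^*_\gamma-1}v\,dx + \alpha_k^{2^*_\gamma-2}\varepsilon\int K\,U_{y^k,\lambda_k}^{2^*_\gamma-1}v\,dx$. The first integral vanishes because $(-\Delta)^\gamma U_{y^k,\lambda_k} = U_{y^k,\lambda_k}^{2^*_\gamma-1}$ and $v\in E^2_{y,\lambda}$ give $\langle U_{y^k,\lambda_k},v\rangle = 0$. The second, after the substitution $x\mapsto y^k+x/\lambda_k$ and the local expansion of $K$ at $z^k$, is bounded by $O\bigl(\varepsilon(|y^k-z^k|^{\beta_k}+\lambda_k^{-\beta_k})\bigr)\|v\|_{\dot{H}^\gamma(\mathbb{R}^N)}$ by the argument used in the proof of Lemma \ref{l:lemmaB2}. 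Every residual cross term has the shape $\int U_{y^1,\lambda_1}^{a'}U_{y^2,\lambda_2}^{b'}v\,dx$ with $a'+b'=2^*_\gamma-1$ and $\min(a',b')>0$ (the bounded factor $(1+\varepsilon K)$ is harmless). By H\"older with conjugate exponents $2^*_\gamma$ and $(2^*_\gamma)'=2^*_\gamma/(2^*_\gamma-1)$ and the Sobolev embedding, each such integral is at most $C\bigl(\int U_{y^1,\lambda_1}^{a''}U_{y^2,\lambda_2}^{b''}\bigr)^{1/(2^*_\gamma)'}\|v\|_{\dot{H}^\gamma(\mathbb{R}^N)}$ for some exponents $a''+b''=2^*_\gamma$; the bubble-product estimates of the first lemma of this appendix give $O(\varepsilon_{12}^{1+\eta})$ for some $\eta>0$ (absorbing any logarithmic factor into a slightly smaller $\tau$), whence the residual is $O(\varepsilon_{12}^{1/2+\tau})\|v\|_{\dot{H}^\gamma(\mathbb{R}^N)}$.

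The main obstacle is the exponent bookkeeping: the pointwise expansion of $(A+B)^{2^*_\gamma-2}$ must be handled uniformly across the two regimes $p\ge 1$ and $p<1$, and the H\"older exponents must be chosen so that the residual bubble-product integrals fall into the range covered by the appendix estimates with decay strictly better than $\varepsilon_{12}^{1/2}$; when a direct choice fails, an iterated H\"older or Young-type splitting supplies admissible exponents. Once the bookkeeping is in place, the assertion follows by combining the orthogonality of $v$ with the $\varepsilon K$-perturbation argument of Lemma \ref{l:lemmaB2} and the two-bubble product estimates.
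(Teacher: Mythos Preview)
Your proposal is correct and follows essentially the same approach as the paper: split off the diagonal term $\alpha_k^{2^*_\gamma-2}U_{y^k,\lambda_k}^{2^*_\gamma-1}$, kill its unweighted part via the orthogonality $\langle U_{y^k,\lambda_k},v\rangle=0$, estimate the $\varepsilon K$-weighted part exactly as in Lemma~\ref{l:lemmaB2}, and bound the residual cross terms by H\"older together with the bubble-product estimates. One small slip: the bubble-product integral $\int U_{y^1,\lambda_1}^{a''}U_{y^2,\lambda_2}^{b''}$ with $a''+b''=2^*_\gamma$ is $O\bigl(\varepsilon_{12}(\log\varepsilon_{12}^{-1})^{c}\bigr)$, not $O(\varepsilon_{12}^{1+\eta})$; but after raising to the power $(2^*_\gamma-1)/2^*_\gamma=\tfrac12+\tfrac{\gamma}{N}$ and absorbing the logarithm you still land at $O(\varepsilon_{12}^{1/2+\tau})$, so the conclusion is unaffected.
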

\begin{proof}
\begin{eqnarray*}
&&\displaystyle\int_{\mathbb{R}^n}(1 + \varepsilon K)\left(\sum_{j = 1}^2\alpha_{j} U_{y^j, \lambda_{j}}\right)^{2^*_\gamma - 2}U_{y^k, \lambda_{k}}v dx \\
&&=\displaystyle\alpha_k^{2^*_\gamma - 2}\int_{\mathbb{R}^n}U_{y^k, \lambda_k}^{2^*_\gamma - 1}vdx + \displaystyle\alpha_k^{2^*_\gamma - 2}\varepsilon\int_{\mathbb{R}^n}K(x)U_{y^k, \lambda_k}^{2^*_\gamma - 1}v dx   \\
&&\,\,\,\,\,\,+\,\,\displaystyle\int_{\mathbb{R}^n}(1 + \varepsilon K)\left(\left(\sum_{j = 1}^2\alpha_{j} U_{y^j, \lambda_{j}}\right)^{2^*_\gamma - 2} - (\alpha_k U_{y^k, \lambda_k})^{2^*_\gamma - 2}\right)U_{y^k, \lambda_{k}}vdx\\
&& = \varepsilon O\left(\int_{\mathbb{R}^n}|x - z^k|^{\beta_k}U_{y^k, \lambda_k}^{2^*_\gamma - 1}|v|dx\right) + O\left(\displaystyle\int_{\mathbb{R}^n}U_{y^1, \lambda_1}^{\alpha}U_{y^2, \lambda_2}^{\beta}|v|dx\right)\\
&& = O\left(\varepsilon\displaystyle\sum_{j = 1}^2\left(\frac{1}{\lambda_j^{\beta_j}} + |y^j - z^j|^{\beta_j}\right) + \varepsilon_{12}^{\frac{1}{2} + \tau}\right)\|v\|_{\dot{H}^\gamma(\mathbb{R}^N)}.
\end{eqnarray*}
\end{proof}
Similarly, we have
\begin{lemma}\label{l:lemmaB7}
For any $(y, \lambda)\in D_\mu$ and $v\in E^2_{y, \lambda}$, if $\mu$ and $\varepsilon$ are small, then
\begin{eqnarray*}
&&\displaystyle\int_{\mathbb{R}^n}(1 + \varepsilon K)\left(\sum_{j = 1}^2\alpha_{j} U_{y^j, \lambda_{j}}\right)^{2^*_\gamma - 2}\frac{\partial U_{y^k, \lambda_{k}}}{\partial \lambda_k}vdx\\ &&= O\left(\frac{\varepsilon}{\lambda_k}\displaystyle\sum_{j = 1}^2\left(\frac{1}{\lambda_j^{\beta_j}} + |y^j - z^j|^{\beta_j}\right) + \frac{\varepsilon_{12}^{\frac{1}{2} + \tau}}{\lambda_k}\right)\|v\|_{\dot{H}^\gamma(\mathbb{R}^N)}.
\end{eqnarray*}
and
\begin{eqnarray*}
&&\displaystyle\int_{\mathbb{R}^n}(1 + \varepsilon K)\left(\sum_{j = 1}^2\alpha_{j} U_{y^j, \lambda_{j}}\right)^{2^*_\gamma - 2}\frac{\partial U_{y^k, \lambda_{k}}}{\partial y^k_i}vdx\\ && = O\left(\lambda_k\varepsilon\displaystyle\sum_{j = 1}^2\left(\frac{1}{\lambda_j^{\beta_j}} + |y^j - z^j|^{\beta_j}\right) + \lambda_k\varepsilon_{12}^{\frac{1}{2} + \tau}\right)\|v\|_{\dot{H}^\gamma(\mathbb{R}^N)}.
\end{eqnarray*}
\end{lemma}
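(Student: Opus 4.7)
The plan is to imitate closely the proof of the immediately preceding lemma, which treats $\int(1+\varepsilon K)(\sum_j \alpha_j U_{y^j,\lambda_j})^{2^*_\gamma-2}U_{y^k,\lambda_k}v\,dx$; the only new ingredients are the pointwise derivative bounds $|\partial_{\lambda_k}U_{y^k,\lambda_k}|\le C\lambda_k^{-1}U_{y^k,\lambda_k}$ and $|\partial_{y^k_i}U_{y^k,\lambda_k}|\le C\lambda_k U_{y^k,\lambda_k}$, both already recorded in the first lemma of Appendix C. These two bounds are what convert an extra factor $\lambda_k^{-1}$, respectively $\lambda_k$, onto the final estimate.

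For the first identity, I expand $(\sum_j\alpha_j U_{y^j,\lambda_j})^{2^*_\gamma-2}=(\alpha_k U_{y^k,\lambda_k})^{2^*_\gamma-2}+R$, where $R$ collects the cross contributions. This splits the integral into three pieces. The diagonal piece
\[
\alpha_k^{2^*_\gamma-2}\int_{\mathbb{R}^N}U_{y^k,\lambda_k}^{2^*_\gamma-2}\,\frac{\partial U_{y^k,\lambda_k}}{\partial\lambda_k}\,v\,dx
\]
vanishes: differentiating $(-\Delta)^\gamma U_{y^k,\lambda_k}=U_{y^k,\lambda_k}^{2^*_\gamma-1}$ in $\lambda_k$ yields $(-\Delta)^\gamma\partial_{\lambda_k}U_{y^k,\lambda_k}=(2^*_\gamma-1)U_{y^k,\lambda_k}^{2^*_\gamma-2}\partial_{\lambda_k}U_{y^k,\lambda_k}$, so the integral equals $(2^*_\gamma-1)^{-1}\langle\partial_{\lambda_k}U_{y^k,\lambda_k},v\rangle$, which is zero by the definition of $E^2_{y,\lambda}$. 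The $\varepsilon K$ piece is bounded by $C\varepsilon\lambda_k^{-1}\int|K|\,U_{y^k,\lambda_k}^{2^*_\gamma-1}|v|\,dx$ and hence, by Lemma \ref{l:lemmaB2}, by $O(\varepsilon\lambda_k^{-1}\sum_j(\lambda_j^{-\beta_j}+|y^j-z^j|^{\beta_j}))\|v\|_{\dot{H}^\gamma}$. For the cross piece, I use the pointwise inequality $|(\sum_j\alpha_j U_{y^j,\lambda_j})^{2^*_\gamma-2}-(\alpha_k U_{y^k,\lambda_k})^{2^*_\gamma-2}|\le C(U_{y^k,\lambda_k}^{2^*_\gamma-3}U_{y^l,\lambda_l}+U_{y^l,\lambda_l}^{2^*_\gamma-2})$ when $2^*_\gamma-2>1$ (and the subadditive bound otherwise), combined with the derivative estimate, to reduce matters to integrals of the form $\lambda_k^{-1}\int U_{y^1,\lambda_1}^\alpha U_{y^2,\lambda_2}^\beta|v|\,dx$ with $\alpha+\beta=2^*_\gamma-1$ and $\alpha,\beta>0$; Hölder's inequality together with the product estimates from the first lemma of Appendix C then gives $O(\varepsilon_{12}^{1/2+\tau}\lambda_k^{-1})\|v\|_{\dot{H}^\gamma}$, exactly as in the proof of Lemma \ref{l:lemmaB3}.

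The second identity is proved by the same three-step decomposition with $\partial_{\lambda_k}U_{y^k,\lambda_k}$ replaced by $\partial_{y^k_i}U_{y^k,\lambda_k}$: the orthogonality $\langle\partial_{y^k_i}U_{y^k,\lambda_k},v\rangle=0$ still annihilates the diagonal term (via the same differentiation of the critical equation), and the bound $|\partial_{y^k_i}U_{y^k,\lambda_k}|\le C\lambda_k U_{y^k,\lambda_k}$ converts every factor $\lambda_k^{-1}$ in the bookkeeping to $\lambda_k$. The main obstacle I anticipate is choosing the exponent split $(\alpha,\beta)$ in the cross-term estimate carefully enough to extract the sharp gain $\varepsilon_{12}^{1/2+\tau}$ rather than the weaker $\varepsilon_{12}^{1/2}$; this forces keeping $\alpha$ and $\beta$ strictly inside the range where the product estimate of Appendix C delivers the non-logarithmic improvement, but once this split is fixed a single $\tau>0$ serves both conclusions.
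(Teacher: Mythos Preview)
Your proposal is correct and is exactly the approach the paper intends: the paper gives no separate proof of this lemma, writing only ``Similarly, we have'' after the preceding lemma, and you have correctly spelled out what ``similarly'' means here --- the same three-term decomposition, with the diagonal term killed by the orthogonality $\langle\partial_{\lambda_k}U_{y^k,\lambda_k},v\rangle=0$ (resp.\ $\langle\partial_{y^k_i}U_{y^k,\lambda_k},v\rangle=0$) and the pointwise bounds $|\partial_{\lambda_k}U|\le C\lambda_k^{-1}U$, $|\partial_{y^k_i}U|\le C\lambda_k U$ supplying the extra factors. One small tightening: in the $\varepsilon K$ step, subtract the constant $K(z^k)$ (legitimate by the same orthogonality) \emph{before} taking absolute values and invoking the derivative bound, since $\int|K|\,U_{y^k,\lambda_k}^{2^*_\gamma-1}|v|$ itself is only $O(\|v\|)$ and Lemma~\ref{l:lemmaB2} as stated does not apply to it; with that adjustment the argument goes through verbatim.
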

\begin{lemma}\label{l:lemmaB8}
For any $(y, \lambda)\in D_\mu$, if $\mu$ is small, then
\begin{eqnarray*}
\displaystyle\int_{\mathbb{R}^N}KU_{y^k, \lambda_{k}}^{2^*_\gamma - 1}\frac{\partial U_{y^k, \lambda_{k}}}{\partial \lambda_k}dx & = &C_{N, \beta_k}\frac{1}{\lambda_k^{\beta_k + 1}}\displaystyle\sum_{i = 1}^Na_i^k + O\left(\frac{1}{\lambda_k^{\beta_k}}|y^k - z^k|\right)\\
& + & O\left(\frac{1}{\lambda_k^{\beta_k + 1 + \sigma}}\right)  + O\left(\frac{1}{\lambda_k}|y^k - z^k|^{\beta_k + \sigma}\right),
\end{eqnarray*}
where $C_{N, \beta_k}$ is a positive constant depending on $N$ and $\beta_k$ only.
\end{lemma}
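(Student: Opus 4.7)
The plan is to exploit the fact that $\int_{\mathbb{R}^N}U_{y,\lambda}^{2^*_\gamma}\,dx$ is independent of $\lambda$: differentiating in $\lambda_k$ gives $\int U_{y^k,\lambda_k}^{2^*_\gamma-1}\partial_{\lambda_k}U_{y^k,\lambda_k}\,dx=0$, so $K$ can be replaced by $K-K(z^k)$ inside the integral without loss. I then split $\mathbb{R}^N = B_\delta(z^k)\cup B_\delta(z^k)^c$ for $\delta>0$ small enough that the local expansion of $K$ holds on $B_\delta(z^k)$ and $|y^k-z^k|\leq\delta/2$. On the complement, the pointwise decay $U_{y^k,\lambda_k}(x)\leq C\lambda_k^{-(N-2\gamma)/2}|x-y^k|^{-(N-2\gamma)}$ together with $|\partial_{\lambda_k}U_{y^k,\lambda_k}|\leq C\lambda_k^{-1}U_{y^k,\lambda_k}$ show the outside contribution is $O(\lambda_k^{-N-1})$, absorbed by $O(\lambda_k^{-\beta_k-1-\sigma})$ since $\beta_k+\sigma<N-2\gamma+1\leq N$.

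Inside $B_\delta(z^k)$, I change variables $t=\lambda_k(x-y^k)$ and set $a:=y^k-z^k$, so that $x_i-z^k_i=a_i+t_i/\lambda_k$. A direct computation from the explicit form of $U_{y^k,\lambda_k}$ yields $U_{y^k,\lambda_k}^{2^*_\gamma-1}\partial_{\lambda_k}U_{y^k,\lambda_k}\,dx = \tfrac{N-2\gamma}{2\lambda_k}\tilde U(t)^{2^*_\gamma}\tfrac{1-|t|^2}{1+|t|^2}\,dt$, where $\tilde U(t)=C_0(1+|t|^2)^{-(N-2\gamma)/2}$. Substituting the expansion $K(x)-K(z^k)=\sum_i a_i^k|a_i+t_i/\lambda_k|^{\beta_k}+O((|a|+|t|/\lambda_k)^{\beta_k+\sigma})$ and replacing $|a_i+t_i/\lambda_k|^{\beta_k}$ by its value at $a_i=0$, namely $\lambda_k^{-\beta_k}|t_i|^{\beta_k}$, produces the principal term $C_{N,\beta_k}\lambda_k^{-\beta_k-1}\sum_i a_i^k$ with $C_{N,\beta_k}=\tfrac{N-2\gamma}{2}\int_{\mathbb{R}^N}|t_1|^{\beta_k}\tilde U^{2^*_\gamma}\tfrac{1-|t|^2}{1+|t|^2}\,dt$; the integral does not depend on $i$ by coordinate-permutation symmetry of the weight, and positivity of $C_{N,\beta_k}$ will follow from an explicit evaluation using a radial change of variable.

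For the remainders I would use the mean-value bound $\bigl||a_i+t_i/\lambda_k|^{\beta_k}-|t_i/\lambda_k|^{\beta_k}\bigr|\leq C|a_i|(|a_i|+|t_i|/\lambda_k)^{\beta_k-1}$, valid for $\beta_k>1$, together with the elementary inequality $(|a_i|+|t_i|/\lambda_k)^{\beta_k-1}\leq C(|a_i|^{\beta_k-1}+(|t_i|/\lambda_k)^{\beta_k-1})$. Integrating against the nonnegative weight $\lambda_k^{-1}\tilde U^{2^*_\gamma}$ (in absolute value) produces a contribution of order $|a|\lambda_k^{-\beta_k}+|a|^{\beta_k}\lambda_k^{-1}$, convergence of $\int|t_1|^{\beta_k-1}\tilde U^{2^*_\gamma}\,dt$ being guaranteed by $\beta_k<N$. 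A parallel bound $(|a|+|t|/\lambda_k)^{\beta_k+\sigma}\leq C(|a|^{\beta_k+\sigma}+(|t|/\lambda_k)^{\beta_k+\sigma})$ handles the $R$-contribution and yields $O(|a|^{\beta_k+\sigma}\lambda_k^{-1})+O(\lambda_k^{-\beta_k-\sigma-1})$. Assembling all pieces produces precisely the three error terms stated in the lemma, since in the operating regime $(y,\lambda)\in D_\mu$ the leftover $|a|^{\beta_k}\lambda_k^{-1}$ is dominated by the first error term.

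The delicate point is the case $\beta_k\in(1,2)$: here $s\mapsto|s|^{\beta_k}$ is merely $C^1$, and a second-order Taylor expansion of $|a_i+t_i/\lambda_k|^{\beta_k}$ would produce a non-integrable factor $|t_i|^{\beta_k-2}$ near $t_i=0$. The first-order mean-value estimate above bypasses this obstacle and also frees me from having to exploit the odd-in-$t_i$ cancellation of the formal linear Taylor term. The scaling identity $\int U^{2^*_\gamma-1}\partial_\lambda U\,dx=0$ is what makes the $K(z^k)$-piece drop out, ensuring that the $\beta_k$-order expansion of $K$ is the first nontrivial contribution and that the main term has the stated order $\lambda_k^{-\beta_k-1}$.
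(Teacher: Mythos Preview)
Your approach coincides with the paper's: both localize to a ball around $z^k$, rescale by $t=\lambda_k(x-y^k)$, insert the local expansion of $K$, and extract the main term $\lambda_k^{-\beta_k-1}\sum_i a_i^k\int|t_i|^{\beta_k}\tilde U^{2^*_\gamma}\tfrac{1-|t|^2}{1+|t|^2}\,dt$ with the same mean-value control of the remainder. Your explicit use of the scaling identity $\int U^{2^*_\gamma-1}\partial_\lambda U=0$ and your discussion of why a second-order Taylor expansion fails for $\beta_k\in(1,2)$ are more careful than the paper's terse computation.

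One assertion is not correct as written: the leftover $|a|^{\beta_k}\lambda_k^{-1}$ is \emph{not} dominated by $|a|\lambda_k^{-\beta_k}$ on all of $D_\mu$, since this would require $(|a|\lambda_k)^{\beta_k-1}\le C$, whereas in $D_\mu$ one only has $|a|\le\mu$ and $\lambda_k\ge\mu^{-1}$, so $|a|\lambda_k$ is unbounded. The paper's proof glosses over the same point, simply writing the error as $O(|y^k-z^k|\lambda_k^{-\beta_k})$ without justification. This gap is harmless for the application: in Section~4 one restricts to $y^k-z^k=\lambda_k^{-1}x^k$ with $|x^k|\le\delta$, so $|a|\lambda_k\le\delta$ and the absorption holds there. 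To prove the lemma exactly as stated you would need a finer argument that exploits the sign change of the weight $\tfrac{1-|t|^2}{1+|t|^2}\tilde U^{2^*_\gamma}$ (your scaling identity says precisely that constants integrate to zero against it), rather than passing to absolute values immediately after the mean-value bound; alternatively, just record $O(|y^k-z^k|^{\beta_k}\lambda_k^{-1})$ as an additional error term.
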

\begin{proof}
\begin{eqnarray*}
&&\displaystyle\int_{\mathbb{R}^n}KU_{y^k, \lambda_{k}}^{2^*_\gamma - 1}\frac{\partial U_{y^k, \lambda_{k}}}{\partial \lambda_k}dx \\
&& = \displaystyle\int_{|x - z^k|\leq r_0}\left(\sum_{i = 1}^N a_i^k |x_i - z_i^k|^{\beta_k} + O(|x - z^k|^{\beta_k + \sigma})\right)U_{y^k, \lambda_{k}}^{2^*_\gamma - 1}\frac{\partial U_{y^k, \lambda_{k}}}{\partial \lambda_k}dx + O\left(\frac{1}{\lambda_k^{2N}}\right)\\
&& = \frac{C_0^{2^*_\gamma}}{\lambda_k^{\beta_{k} + 1}}\displaystyle\int_{\mathbb{R}^N}\frac{N - 2\gamma}{2}\sum_{i = 1}^N a_i^k |x_i + \lambda_k(y^k_i - z_i^k)|^{\beta_k}\frac{1 - |x|^2}{(1 + |x|^2)^{N + 1}}dx\\
&&\,\,\,\,\,\, +\,\, O\left(\frac{1}{\lambda_k^{\beta_k + 1 + \sigma}}\right) + O\left(\frac{1}{\lambda_k}|y^k - z^k|^{\beta_k + \sigma}\right)\\
&& = \frac{C_0^{2^*_\gamma}}{\lambda_k^{\beta_{k} + 1}}\frac{N - 2\gamma}{2}\displaystyle\int_{\mathbb{R}^N}\sum_{i = 1}^N a_i^k |x_i|^{\beta_k}\frac{1 - |x|^2}{(1 + |x|^2)^{N + 1}}dx\\ &&
\,\,\,\,\,\,+\,\, O\left(\frac{1}{\lambda_k^{\beta_k}}|y^k - z^k|\right) + O\left(\frac{1}{\lambda_k^{\beta_k + 1 + \sigma}}\right) + O\left(\frac{1}{\lambda_k}|y^k - z^k|^{\beta_k + \sigma}\right)
\end{eqnarray*}
\begin{eqnarray*}
&& = \frac{C_0^{2^*_\gamma}}{\lambda_k^{\beta_{k} + 1}}\frac{N - 2\gamma}{2N}\displaystyle\sum_{i = 1}^Na_i^k\int_{\mathbb{R}^N}|x|^{\beta_k}\frac{1 - |x|^2}{(1 + |x|^2)^{N + 1}}dx\\ &&
\,\,\,\,\,\,+\,\, O\left(\frac{1}{\lambda_k^{\beta_k}}|y^k - z^k|\right) + O\left(\frac{1}{\lambda_k^{\beta_k + 1 + \sigma}}\right) + O\left(\frac{1}{\lambda_k}|y^k - z^k|^{\beta_k + \sigma}\right)\\
&& = \frac{C_0^{2^*_\gamma}}{\lambda_k^{\beta_{k} + 1}}\frac{N - 2\gamma}{2N}\displaystyle\sum_{i = 1}^Na_i^k\frac{2n - 2\beta_k - 4}{2n - \beta_k -3}\int_{\mathbb{R}^N}|x|^{\beta_k}\frac{1}{(1 + |x|^2)^{N + 1}}dx\\
&&\,\,\,\,\,\,+\,\, O\left(\frac{1}{\lambda_k^{\beta_k}}|y^k - z^k|\right) + O\left(\frac{1}{\lambda_k^{\beta_k + 1 + \sigma}}\right) + O\left(\frac{1}{\lambda_k}|y^k - z^k|^{\beta_k + \sigma}\right)\\
&& = C_0^{2^*_\gamma}\frac{N - 2\gamma}{2N}\frac{2n - 2\beta_k - 4}{2n - \beta_k -3}\frac{1}{\lambda_k^{\beta_{k} + 1}}\int_{\mathbb{R}^N}|x|^{\beta_k}\frac{1}{(1 + |x|^2)^{N + 1}}dx\displaystyle\sum_{i = 1}^Na_i^k\\
&&\,\,\,\,\,\,+\,\, O\left(\frac{1}{\lambda_k^{\beta_k}}|y^k - z^k|\right) + O\left(\frac{1}{\lambda_k^{\beta_k + 1 + \sigma}}\right) + O\left(\frac{1}{\lambda_k}|y^k - z^k|^{\beta_k + \sigma}\right),
\end{eqnarray*}
\end{proof}
\begin{lemma}\label{l:lemmaB9}
For any $(y, \lambda)\in D_\mu$, if $\mu$ is small, then
\begin{eqnarray*}
\displaystyle\int_{\mathbb{R}^N}KU_{y^k, \lambda_{k}}^{2^*_\gamma - 1}\frac{\partial U_{y^k, \lambda_{k}}}{\partial y^k_i}dx & = &D_{N, \beta_k}\frac{1}{\lambda_k^{\beta_k - 1}}a_i^k (y^k_i - z^k_i)+ O\left(\frac{1}{\lambda_k^{\beta_k - 1}}\lambda_k^2|y^k - z^k|^2\right)
\end{eqnarray*}
\begin{eqnarray*}
&& +\,\, O\left(\frac{1}{\lambda_k^{\beta_k - 1 + \sigma}}\right)  + O\left(\lambda_k|y^k - z^k|^{\beta_k + \sigma}\right),
\end{eqnarray*}
where $D_{N, \beta_k}$ is a positive constant depending on $N$ and $\beta_k$ only.
\end{lemma}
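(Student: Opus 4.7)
The plan is to mirror the proof of the preceding Lemma B8 closely. The key structural difference is that $\partial/\partial y^k_i$ of $U_{y^k, \lambda_k}$ carries a factor of $\lambda_k$ (rather than $1/\lambda_k$ from $\partial/\partial \lambda_k$) and produces an integrand odd in the rescaled $y_i$ variable, so the leading nonzero contribution must come from a first-order expansion in $y^k_i - z^k_i$ to break this oddness. First I exploit the rapid decay of $U_{y^k, \lambda_k}$ to restrict the integral to the ball $|x - z^k|\le r_0$, absorbing the tail as $O(\lambda_k^{-M})$ for arbitrarily large $M$; inside this ball I insert the local expansion $K(x)=K(z^k)+\sum_j a_j^k|x_j - z^k_j|^{\beta_k}+O(|x-z^k|^{\beta_k+\sigma})$ assumed for each $z^k\in\Sigma$.

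The constant $K(z^k)$ contributes zero because $U_{y^k,\lambda_k}^{2^*_\gamma-1}\partial_{y^k_i}U_{y^k,\lambda_k}=-(2^*_\gamma)^{-1}\partial_{x_i}U_{y^k,\lambda_k}^{2^*_\gamma}$ integrates to zero by the divergence theorem. For the power-law terms I rescale $y=\lambda_k(x-y^k)$, writing $\tilde U(y)=C_0(1+|y|^2)^{-(N-2\gamma)/2}$, and integrate by parts in $y_i$. The $y_i$-derivative of $|x_j - z^k_j|^{\beta_k}$ vanishes for $j\ne i$, so only the $j=i$ summand survives, and the problem reduces to evaluating
\begin{equation*}
\frac{\beta_k a_i^k}{2^*_\gamma}\int_{\mathbb{R}^N}\bigl|y^k_i - z^k_i + y_i/\lambda_k\bigr|^{\beta_k - 1}\,\mathrm{sign}\bigl(y^k_i - z^k_i + y_i/\lambda_k\bigr)\,\tilde U^{2^*_\gamma}(y)\,dy.
\end{equation*}
Since $\beta_k>1$, the map $t\mapsto|t|^{\beta_k-1}\mathrm{sign}(t)$ is differentiable away from the origin, so I expand in $y^k_i - z^k_i$ around $0$: the zeroth-order piece integrates to zero by oddness in $y_i$, and the first-order piece produces the stated leading term, proportional to $a_i^k(y^k_i - z^k_i)$ times a universal constant involving $\int |y_i|^{\beta_k - 2}\tilde U^{2^*_\gamma}(y)\,dy$.

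The principal technical obstacle is controlling the remainder of the expansion uniformly in $(y^k,\lambda_k)\in D_\mu$: since $|t|^{\beta_k-1}\mathrm{sign}(t)$ is only H\"older at the origin when $\beta_k<2$, a term-by-term Taylor estimate fails near the zero set of $y_i+\lambda_k(y^k_i-z^k_i)$. I plan to handle this by splitting the $y$-integral into the regime $|y_i|\ge 2\lambda_k|y^k_i-z^k_i|$, where a mean-value bound yields an integrable quadratic remainder of size $|y_i|^{\beta_k-3}|\lambda_k(y^k_i-z^k_i)|^2$, and its complement, whose small measure permits a direct pointwise estimate; together these account for the claimed $O\bigl(\lambda_k^{2}|y^k-z^k|^2/\lambda_k^{\beta_k-1}\bigr)$ contribution. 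Finally, the higher-order piece $O(|x-z^k|^{\beta_k+\sigma})$ in the expansion of $K$, once integrated against $U_{y^k,\lambda_k}^{2^*_\gamma-1}\partial U_{y^k,\lambda_k}/\partial y^k_i$ and rescaled, separates into the small-$|y^k-z^k|$ contribution $O(\lambda_k^{-(\beta_k-1+\sigma)})$ and the large-$|y^k-z^k|$ contribution $O(\lambda_k|y^k-z^k|^{\beta_k+\sigma})$, completing the claimed bound.
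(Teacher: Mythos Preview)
Your proposal is correct and follows essentially the same route as the paper: localize to $|x-z^k|\le r_0$, insert the expansion of $K$, rescale $y=\lambda_k(x-y^k)$, Taylor-expand in the shift $\lambda_k(y^k-z^k)$, and use oddness to kill the zeroth-order piece. The only difference is cosmetic: the paper keeps the explicit factor $\partial_{y^k_i}U_{y^k,\lambda_k}\propto \lambda_k^2(x_i-y^k_i)/(1+\lambda_k^2|x-y^k|^2)$ and eliminates the $h\ne i$ summands by a further oddness argument, whereas you first write $U^{2^*_\gamma-1}\partial_{y^k_i}U=-(2^*_\gamma)^{-1}\partial_{x_i}U^{2^*_\gamma}$ and integrate by parts, which kills $j\ne i$ immediately. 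Your discussion of the quadratic remainder (splitting at $|y_i|\sim\lambda_k|y^k_i-z^k_i|$ to cope with the non-$C^2$ behaviour of $|t|^{\beta_k-1}\mathrm{sign}\,t$ when $\beta_k<2$) is in fact more careful than the paper, which simply writes the formal Taylor expansion.
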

\begin{proof}
\begin{eqnarray*}
&&\displaystyle\int_{\mathbb{R}^N}KU_{y^k, \lambda_{k}}^{2^*_\gamma - 1}\frac{\partial U_{y^k, \lambda_{k}}}{\partial y^k_i}dx = (N -2\gamma)\displaystyle\int_{\mathbb{R}^N}KU_{y^k, \lambda_{k}}^{2^*_\gamma}\frac{\lambda_k^2(x_i - y^k_i)}{1 + \lambda_k^2|x - y^k|^2}dx\\
&&= (N - 2\gamma)\displaystyle\int_{|x - z^k|\leq r_0}\left(\sum_{h = 1}^N a_h^k |x_h - z_h^k|^{\beta_k} + O(|x - z^k|^{\beta_k + \sigma})\right)U_{y^k, \lambda_{k}}^{2^*_\gamma}\frac{\lambda_k^2(x_i - y^k_i)}{1 + \lambda_k^2|x - y^k|^2}dx\\
&&\,\,\,\,\,\,+\,\, O\left(\frac{1}{\lambda_k^{N - 1}}\right)\\
&&= (N - 2\gamma)\lambda_k\displaystyle\int_{\mathbb{R}^N}\sum_{h = 1}^N a_h^k |x_h - z_h^k|^{\beta_k}U_{y^k, \lambda_{k}}^{2^*_\gamma}\frac{\lambda_k(x_i - y^k_i)}{1 + \lambda_k^2|x - y^k|^2}dx\\
&&\,\,\,\,\,\,+\,\, O\left(\frac{1}{\lambda_k^{\beta_k - 1 + \sigma}} + \lambda_k |y^k - z^k|^{\beta_k + \sigma}\right)
\end{eqnarray*}
\begin{eqnarray*}
&&= (N - 2\gamma)\frac{1}{\lambda_k^{\beta_k - 1}}\displaystyle\int_{\mathbb{R}^N}\sum_{h = 1}^N a_h^k (|x_h|^{\beta_k} + \beta_k |x_h|^{\beta_k - 2}x_h\lambda_k(y^k_h - z^k_h))U_{0,1}^{2^*_\gamma}\frac{x_i}{1 + |x|^2}dx\\
&&\,\,\,\,\,\,+\,\, O\left(\frac{1}{\lambda_k^{\beta_k - 1}}\lambda_k^2|y^k - z^k|^2\right) + O\left(\frac{1}{\lambda_k^{\beta_k - 1 + \sigma}} + \lambda_k |y^k - z^k|^{\beta_k + \sigma}\right)\\
&&= C_0^{2^*_\gamma}\frac{(N - 2\gamma)}{N}\frac{\beta_k}{\lambda_k^{\beta_k - 1}}a_i^k\displaystyle\int_{\mathbb{R}^N}\frac{|x|^{\beta_k}}{(1 + |x|^2)^{N + 1}}dx (y^k_h - z^k_h)\\
&&\,\,\,\,\,\,+\,\, O\left(\frac{1}{\lambda_k^{\beta_k - 1}}\lambda_k^2|y^k - z^k|^2\right) + O\left(\frac{1}{\lambda_k^{\beta_k - 1 + \sigma}} + \lambda_k |y^k - z^k|^{\beta_k + \sigma}\right).
\end{eqnarray*}
\end{proof}
Similarly, we have
\begin{lemma}\label{l:lemmaB10}
For any $(y, \lambda)\in D_\mu$, if $\mu$ is small and $k \neq l$, then
\begin{eqnarray*}
\displaystyle\int_{\mathbb{R}^N}U_{y^k, \lambda_{k}}^{2^*_\gamma - 2}\frac{\partial U_{y^k, \lambda_{k}}}{\partial \lambda_k}U_{y^l, \lambda_l}dx = \frac{1}{2^*_\gamma - 1}C_0\frac{\varepsilon_{12}}{\lambda_k |z^1 - z^2|^{N - 2\gamma}} + O\left(\frac{\varepsilon_{12}^{\frac{N}{N - 2\gamma}}}{\lambda_k}\right).
\end{eqnarray*}
\end{lemma}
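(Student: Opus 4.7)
The plan is to evaluate the integral by a rescaling centered at the peak $y^k$, which reduces the computation to an explicit $z$-integral after factoring out the slowly-varying bubble $U_{y^l,\lambda_l}$.

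A useful preliminary observation, which both forecasts the form of the answer and suggests why the factor $\tfrac{1}{2^*_\gamma-1}$ appears, is the identity
\[
U_{y^k,\lambda_k}^{2^*_\gamma-2}\frac{\partial U_{y^k,\lambda_k}}{\partial\lambda_k}=\frac{1}{2^*_\gamma-1}\frac{\partial}{\partial\lambda_k}\bigl(U_{y^k,\lambda_k}^{2^*_\gamma-1}\bigr).
\]
Combining this with the first estimate of Appendix C (that $\int U_{y^k,\lambda_k}^{2^*_\gamma-1}U_{y^l,\lambda_l}\,dx$ equals the interaction $\tilde\varepsilon_{kl}$ up to a universal constant, modulo the higher-order error) gives a heuristic derivation of the stated leading term. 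Since the $O(\tilde\varepsilon_{kl}^{N/(N-2\gamma)})$ error in that estimate need not survive a formal differentiation in $\lambda_k$, I would use this identity only as a check and do the quantitative estimate directly.

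The concrete steps are as follows. First, change variables $x=y^k+z/\lambda_k$; using the formula $\partial_{\lambda_k}U_{y^k,\lambda_k}=\tfrac{N-2\gamma}{2\lambda_k}\cdot\tfrac{1-\lambda_k^2|x-y^k|^2}{1+\lambda_k^2|x-y^k|^2}U_{y^k,\lambda_k}$ and the fact that $U_{y^k,\lambda_k}^{2^*_\gamma-2}(x)=C_0^{2^*_\gamma-2}\lambda_k^{2\gamma}(1+|z|^2)^{-2\gamma}$, the product $U_{y^k,\lambda_k}^{2^*_\gamma-2}\partial_{\lambda_k}U_{y^k,\lambda_k}$ together with the Jacobian $\lambda_k^{-N}$ becomes a $\lambda_k^{-1}\lambda_k^{-(N-2\gamma)/2}$-scaled profile proportional to $(1-|z|^2)(1+|z|^2)^{-(N+2\gamma+2)/2}$. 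Second, Taylor-expand $U_{y^l,\lambda_l}(y^k+z/\lambda_k)$. Since $y^k\to z^1$, $y^l\to z^2$ with $z^1\ne z^2$ and $\lambda_l\to\infty$, on the bulk region (say $|z|\le\lambda_k^{1/2}$) one has $1+\lambda_l^2|y^k+z/\lambda_k-y^l|^2=\lambda_l^2|y^k-y^l|^2(1+O(|z|/(\lambda_k|y^k-y^l|)))$, so
\[
U_{y^l,\lambda_l}(y^k+z/\lambda_k)=\frac{C_0}{\lambda_l^{(N-2\gamma)/2}|y^k-y^l|^{N-2\gamma}}\bigl(1+O(|z|/(\lambda_k|y^k-y^l|))\bigr).
\]
Third, the universal $z$-integral $\int_{\mathbb{R}^N}(1-|z|^2)(1+|z|^2)^{-(N+2\gamma+2)/2}\,dz$ is a finite constant computable via a Beta-function identity; reassembling the scalar factors and using $|y^k-y^l|=|z^1-z^2|(1+o(1))$ gives exactly $\tfrac{1}{2^*_\gamma-1}C_0\,\varepsilon_{12}/(\lambda_k|z^1-z^2|^{N-2\gamma})$ with the appropriate constant.

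The main obstacle I anticipate is obtaining the remainder with the sharp exponent $\varepsilon_{12}^{N/(N-2\gamma)}/\lambda_k$, rather than a weaker power. I would split the $z$-integration into an interior region where the Taylor expansion above is accurate and a tail $|z|\gtrsim R$, where the integrable decay $(1+|z|^2)^{-(N+2\gamma+2)/2}(1-|z|^2)\sim -|z|^{-(N+2\gamma)}$ of the profile and the pointwise bound on $U_{y^l,\lambda_l}$ combine to give the desired tail estimate after choosing $R$ comparable to $\lambda_k|z^1-z^2|$. A minor loose end is the replacement $|y^k-y^l|\to|z^1-z^2|$ in the denominator, which produces an additional error of order $\mu\varepsilon_{12}/\lambda_k$ that is dominated by the claimed remainder once $\mu$ is small and $\lambda_1\lambda_2$ is large.
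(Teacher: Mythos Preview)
The paper omits the proof entirely: Lemma~\ref{l:lemmaB10} is stated after the phrase ``Similarly, we have'' and no argument is given. What is implied is a computation in the style of the first lemma of Appendix~C (the expansion of $\int U_{y^i,\lambda_i}^{2^*_\gamma-1}U_{y^j,\lambda_j}\,dx$), and your rescale-and-Taylor approach is exactly that. The identity $U_{y^k,\lambda_k}^{2^*_\gamma-2}\partial_{\lambda_k}U_{y^k,\lambda_k}=\frac{1}{2^*_\gamma-1}\partial_{\lambda_k}\bigl(U_{y^k,\lambda_k}^{2^*_\gamma-1}\bigr)$ you observe is in fact the cleanest route here: since the error term in the first lemma of Appendix~C comes with a uniform bound (the computation there is explicit), one may differentiate it in $\lambda_k$ and obtain the stated expansion directly, without redoing the tail splitting. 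Your direct computation is equally valid and gives more explicit control.

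One point in your last paragraph is not correct, however. You claim that the replacement $|y^k-y^l|\to|z^1-z^2|$ introduces an error $O(\mu\,\varepsilon_{12}/\lambda_k)$ that is ``dominated by the claimed remainder once $\mu$ is small and $\lambda_1\lambda_2$ is large.'' This is false: for any fixed $\mu>0$, as $\lambda_1\lambda_2\to\infty$ one has $\varepsilon_{12}^{N/(N-2\gamma)}=o(\varepsilon_{12})$, so $\mu\,\varepsilon_{12}/\lambda_k$ is \emph{larger} than $\varepsilon_{12}^{N/(N-2\gamma)}/\lambda_k$, not smaller. The honest leading term carries $|y^1-y^2|^{-(N-2\gamma)}$ rather than $|z^1-z^2|^{-(N-2\gamma)}$, and the discrepancy is $O(\mu)\cdot\varepsilon_{12}/\lambda_k$. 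For the purposes of the paper this is harmless: in Lemma~\ref{l:lemma2.2} and in the degree argument of Section~4 only the sign and the order $\varepsilon_{12}/\lambda_k$ of this term matter, and a multiplicative $(1+O(\mu))$ perturbation of the constant $C_0/|z^1-z^2|^{N-2\gamma}$ does not affect either. But you should not assert that this error fits inside $O(\varepsilon_{12}^{N/(N-2\gamma)}/\lambda_k)$.
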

and
\begin{lemma}\label{l:lemmaB11}
For any $(y, \lambda)\in D_\mu$, if $\mu$ is small and $k \neq l$, then
\begin{eqnarray*}
\displaystyle\int_{\mathbb{R}^N}U_{y^k, \lambda_{k}}^{2^*_\gamma - 2}\frac{\partial U_{y^k, \lambda_{k}}}{\partial y^k_i}U_{y^l, \lambda_l}dx = C_1\lambda_1\lambda_2(y^k_i - y^l_i)\varepsilon_{12}^{\frac{N}{N - 2\gamma}} + O\left(\varepsilon_{12}^{\frac{N - 1}{N - 2\gamma}}\right).
\end{eqnarray*}
\end{lemma}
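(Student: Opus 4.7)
The plan is to pull the differentiation out of the integral by means of the identity
\[
U_{y^k,\lambda_k}^{2^*_\gamma-2}\,\frac{\partial U_{y^k,\lambda_k}}{\partial y^k_i} \;=\; \frac{1}{2^*_\gamma-1}\,\frac{\partial}{\partial y^k_i}\!\left(U_{y^k,\lambda_k}^{2^*_\gamma-1}\right),
\]
so that, after a routine justification of the exchange of derivative and integral (the integrand is smooth in $y^k_i$ with an integrable majorant uniform on bounded neighbourhoods),
\[
\int_{\mathbb{R}^N} U_{y^k,\lambda_k}^{2^*_\gamma-2}\,\frac{\partial U_{y^k,\lambda_k}}{\partial y^k_i}\,U_{y^l,\lambda_l}\,dx \;=\; \frac{1}{2^*_\gamma-1}\,\frac{\partial}{\partial y^k_i}\!\int_{\mathbb{R}^N} U_{y^k,\lambda_k}^{2^*_\gamma-1}\,U_{y^l,\lambda_l}\,dx.
\]
The right-hand integral is precisely the one asymptotically analysed in the opening lemma of this appendix, so in principle one could differentiate that asymptotic directly; I prefer instead to repeat its region-splitting argument with the derivative $\partial/\partial y^k_i$ already built in, which yields asymptotics on the derivative without an a posteriori smoothness claim on the remainder.

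Concretely, I change variables $x = y^k + z/\lambda_k$ to normalize the bubble at $y^k$; the integral becomes a fixed positive constant times
\[
\lambda_k^{\,1-\frac{N-2\gamma}{2}}\int_{\mathbb{R}^N}\frac{z_i}{(1+|z|^2)^{\frac{N+2\gamma}{2}+1}}\,U_{y^l,\lambda_l}\!\left(y^k+\tfrac{z}{\lambda_k}\right)dz,
\]
and I split the $z$-integration into $|z|\le \sqrt{\mu}/10$ and its complement, where $\mu = \lambda_k\lambda_l|y^k-y^l|^2\to\infty$ on $D_\mu$. In the inner region $\lambda_l|x-y^l|\to\infty$, so $U_{y^l,\lambda_l}$ may be replaced by its far-field form $C_0\lambda_l^{-(N-2\gamma)/2}\,|y^k-y^l+z/\lambda_k|^{-(N-2\gamma)}$ up to a relative error of size $(\lambda_l|y^k-y^l|)^{-2}$. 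Taylor-expanding this in $z/\lambda_k$ produces a constant piece (annihilated by the odd factor $z_i$) and a linear piece which, via $\int z_iz_j\,g(|z|)\,dz = 0$ for $i\neq j$, gives a leading contribution proportional to
\[
(\lambda_k\lambda_l)^{-\frac{N-2\gamma}{2}}\,(y^k_i-y^l_i)\,|y^k-y^l|^{-(N-2\gamma+2)}.
\]
Since $|y^k-y^l|$ is bounded away from zero uniformly on $D_\mu$, absorbing the $|y^k-y^l|^{-(N-2\gamma+2)}$ factor into the constant $C_1$ gives the announced leading term. The outer region $|z|\ge\sqrt{\mu}/10$ is controlled by the decay of $z_i(1+|z|^2)^{-(N+2\gamma)/2-1}$, exactly as in the opening lemma of this appendix, and contributes only to the remainder.

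The main obstacle is making the error sharp at the claimed order. Two distinct corrections must be tracked simultaneously: the next-to-leading Taylor correction in $z/\lambda_k$, where the cubic-in-$z$ integrand would naively spoil the bound but is rescued by an extra factor $\lambda_k^{-2}$ once the symmetry $\int z_iz_jz_h\,g(|z|)\,dz = 0$ is exploited; and the relative-size $(\lambda_l|y^k-y^l|)^{-2}$ error in replacing $U_{y^l,\lambda_l}$ by its far-field form. Combining these uniformly in $(y,\lambda)\in D_\mu$ and repackaging in powers of $\varepsilon_{12}$ is the quantitative heart of the proof; the remaining bookkeeping---change of variable, parity-based vanishing, and outer-tail control---mirrors what is already done in the opening lemma of this appendix and in Lemma~\ref{l:lemmaB10}.
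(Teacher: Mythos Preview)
Your approach is essentially what the paper means by ``Similarly'': the paper supplies no separate argument for this lemma, and your plan---normalize at $y^k$ via $x=y^k+z/\lambda_k$, split at radius $\sqrt{\mu}/10$, replace $U_{y^l,\lambda_l}$ by its far-field form, Taylor-expand in $z/\lambda_k$, and exploit parity---is exactly the template of the opening lemma of Appendix~C (and of Lemma~\ref{l:lemmaB10}) adapted to carry the extra factor $z_i$.

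Two small points worth tightening. First, $|y^k-y^l|$ is not constant on $D_\mu$, so one does not literally ``absorb $|y^k-y^l|^{-(N-2\gamma+2)}$ into $C_1$''; rather, write $|y^k-y^l|=|z^1-z^2|+O(\mu)$ and push the $O(\mu)$ correction into the remainder, as is done in Lemma~\ref{l:lemmaB10}. Second, your honestly computed leading order is
\[
C\,(\lambda_1\lambda_2)^{-\frac{N-2\gamma}{2}}(y^k_i-y^l_i)\,|z^1-z^2|^{-(N-2\gamma+2)}
\;=\;C'\,\lambda_1\lambda_2\,(y^k_i-y^l_i)\,\varepsilon_{12}^{\frac{N-2\gamma+2}{N-2\gamma}},
\]
which coincides with the stated exponent $\varepsilon_{12}^{N/(N-2\gamma)}$ only when $\gamma=1$; the exponent in the lemma's statement appears to be carried over from the classical case, and your computation gives the form that is actually used downstream (compare the term $-C_2(y^k_i-y^l_i)(\lambda_1\lambda_2)^{-(N-2\gamma)/2}$ in Lemma~\ref{l:lemma2.3}). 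This is a discrepancy in the stated formula, not a flaw in your method.
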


\section{}
The integration by parts formula for the fractional Laplacian is frequently used in this paper. We give its proof here.
\begin{lemma}
If $f$ and $g$ belongs to the space $\dot{H}^{2s}(\mathbb{R}^N)\cap L^2(\mathbb{R}^N)$, then it holds that
\begin{equation*}
\displaystyle\int_{\mathbb{R}^N}(- \Delta)^s f(x) g(x)dx = \int_{\mathbb{R}^N}(- \Delta)^s g(x) f(x)dx
\end{equation*}
\end{lemma}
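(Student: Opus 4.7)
The plan is to prove the symmetry of the bilinear form $(f,g)\mapsto \int_{\mathbb{R}^N}(-\Delta)^s f\cdot g\,dx$ via the Fourier transform characterization of the fractional Laplacian already invoked earlier in the paper through the identity $[u]_\gamma = C\|(-\Delta)^{\gamma/2}u\|_{L^2}$. Recall that on the Schwartz class $\mathcal{S}$, $(-\Delta)^s u = \mathcal{F}^{-1}(|\xi|^{2s}\mathcal{F}u)$, and this extends by duality/continuity to any tempered distribution whose symbol-multiplied Fourier transform lies in $\mathcal{S}'$.

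First, I would verify that both integrals in the statement are absolutely convergent. Under the hypothesis $f,g \in \dot{H}^{2s}(\mathbb{R}^N)\cap L^2(\mathbb{R}^N)$, we have $(-\Delta)^s f, (-\Delta)^s g\in L^2(\mathbb{R}^N)$; combined with $f,g\in L^2(\mathbb{R}^N)$ and the Cauchy--Schwarz inequality, both $(-\Delta)^s f\cdot g$ and $(-\Delta)^s g\cdot f$ are in $L^1(\mathbb{R}^N)$, so each side of the claimed identity is a finite real number.

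Next, I would apply Plancherel's identity. Since the Fourier transform is an isometry on $L^2$ and $\widehat{(-\Delta)^s f}(\xi)=|\xi|^{2s}\widehat{f}(\xi)$ (valid as an $L^2$ identity, obtained by continuous extension from $\mathcal{S}$ using that $f\in\dot{H}^{2s}$), we obtain
\[
\int_{\mathbb{R}^N}(-\Delta)^s f\cdot g\,dx = \int_{\mathbb{R}^N}|\xi|^{2s}\widehat{f}(\xi)\,\overline{\widehat{g}(\xi)}\,d\xi,
\]
and the analogous identity with $f$ and $g$ interchanged. Since $f$ and $g$ are real-valued, the left-hand side of the statement is real and therefore equals its complex conjugate; conjugating the displayed formula yields $\int |\xi|^{2s}\widehat{g}\,\overline{\widehat{f}}\,d\xi = \int_{\mathbb{R}^N}(-\Delta)^s g\cdot f\,dx$, which is the required symmetry.

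There is no genuine obstacle here; the only care needed is in justifying that the Fourier-side identity $\widehat{(-\Delta)^s f}=|\xi|^{2s}\widehat{f}$ holds in the $L^2$ sense when $f\in\dot{H}^{2s}\cap L^2$, which follows from the density of $\mathcal{S}$ in this space together with the continuity of both sides as operators into $\mathcal{S}'$. An equally short alternative, more in the spirit of the singular-integral definition recalled in the introduction, is to write
\[
\int_{\mathbb{R}^N}(-\Delta)^s f\cdot g\,dx = \frac{C(N,s)}{2}\int_{\mathbb{R}^N}\!\int_{\mathbb{R}^N}\frac{(f(x)-f(y))(g(x)-g(y))}{|x-y|^{N+2s}}\,dx\,dy
\]
after a Fubini-based symmetrization of the principal-value integral; the right-hand side is manifestly symmetric in $f$ and $g$, which again yields the claim.
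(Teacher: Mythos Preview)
Your argument is correct. Your primary route---Plancherel and the Fourier multiplier description of $(-\Delta)^s$---is genuinely different from what the paper does: the paper works entirely with the singular-integral representation, writing
\[
\int_{\mathbb{R}^N}(-\Delta)^s f\cdot g\,dx
= \int_{\mathbb{R}^N}\!\int_{\mathbb{R}^N}\frac{g(x)(f(x)-f(y))}{|x-y|^{N+2s}}\,dy\,dx,
\]
swapping $x\leftrightarrow y$ to produce the companion identity with a minus sign, and averaging to reach the manifestly symmetric bilinear form $\tfrac12\iint (f(x)-f(y))(g(x)-g(y))|x-y|^{-N-2s}\,dx\,dy$, from which the conclusion is unpacked. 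This is precisely the ``equally short alternative'' you sketch at the end of your proposal. The trade-off is that your Fourier argument is cleaner analytically (no principal value or Fubini justification against a non-integrable kernel is needed once $(-\Delta)^s f,\,(-\Delta)^s g\in L^2$), whereas the paper's computation stays closer to the pointwise definition of $(-\Delta)^s$ used elsewhere in the paper and does not invoke the spectral characterization. One small remark: your conjugation step tacitly uses that $f,g$ are real-valued; this is implicit in the paper's setting, and in any case the Fourier computation can be made to work for complex $f,g$ as well by using the bilinear Parseval identity $\int fg=\int\hat f(\xi)\hat g(-\xi)\,d\xi$ and the evenness of $|\xi|^{2s}$.
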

\begin{proof}
\begin{eqnarray*}
\displaystyle\int_{\mathbb{R}^N}(- \Delta)^s f(x) g(x)dx &=& \int_{\mathbb{R}^N}g(x)\int_{\mathbb{R}^N}\frac{f(x) - f(y)}{|x -y|^{N + 2s}}dydx\\
& = & \int_{\mathbb{R}^N}\int_{\mathbb{R}^N}\frac{g(x)(f(x) - f(y))}{|x -y|^{N + 2s}}dydx\\
& = & \int_{\mathbb{R}^N}\int_{\mathbb{R}^N}\frac{g(y)(f(y) - f(x))}{|x -y|^{N + 2s}}dydx\\
& = & - \int_{\mathbb{R}^N}\int_{\mathbb{R}^N}\frac{g(y)(f(x) - f(y))}{|x -y|^{N + 2s}}dydx.
\end{eqnarray*}
Then
\begin{eqnarray*}
\displaystyle\int_{\mathbb{R}^N}(- \Delta)^s f(x) g(x)dx & = & \frac{1}{2}\int_{\mathbb{R}^N}\int_{\mathbb{R}^N}\frac{(g(x) - g(y))(f(x) - f(y))}{|x -y|^{N + 2s}}dydx\\
& = & \frac{1}{2}\int_{\mathbb{R}^N}(- \Delta)^s g(x) f(x)dx - \frac{1}{2}\int_{\mathbb{R}^N}\int_{\mathbb{R}^N}\frac{(g(x) - g(y))f(y)}{|x -y|^{N + 2s}}dydx\\
& = & \frac{1}{2}\int_{\mathbb{R}^N}(- \Delta)^s g(x) f(x)dx + \frac{1}{2}\int_{\mathbb{R}^N}\int_{\mathbb{R}^N}\frac{(g(x) - g(y))f(x)}{|x -y|^{N + 2s}}dydx\\
& = & \int_{\mathbb{R}^N}(- \Delta)^s g(x) f(x)dx.
\end{eqnarray*}
\end{proof}

\end{appendix}
\bibliography{mrabbrev,mlabbr2003-0,localbib}
\bibliographystyle{plain}
\end{document}